\tikzset{decorate sep/.style 2 args=
{decorate,decoration={shape backgrounds,shape=circle,shape size=#1,shape sep=#2}}}
\theoremstyle{plain} \newtheorem{Thm}{Theorem}[section]
\theoremstyle{plain} \newtheorem{Cor}[Thm]{Corollary}
\theoremstyle{plain} \newtheorem{Prop}[Thm]{Proposition}
\theoremstyle{plain} \newtheorem{Lemma}[Thm]{Lemma}
\theoremstyle{definition} 
\theoremstyle{definition} \newtheorem{Rem}[Thm]{Remark}
\theoremstyle{definition} 
\theoremstyle{definition} 
\newcommand{\thmlist}{
\renewcommand{\theenumi}{\alph{enumi}}
\renewcommand{\labelenumi}{(\theenumi)}}
\renewcommand{\Re}{\mathop{\rm{Re}}}
\renewcommand{\Im}{\mathop{\rm{Im}}}
\newcommand{\id}{\mathop{\rm{id}}}
\newcommand{\Ad}{\mathop{\rm{Ad}}}
\newcommand{\ad}{\mathop{\rm{ad}}}
\newcommand{\Ker}{\mathop{\rm{ker}}}
\newcommand{\inner}[2]{\langle#1,#2\rangle}
\newcommand{\C}{\ensuremath{\mathbb C}}
\newcommand{\D}{\ensuremath{\mathbb D}}
\newcommand{\R}{\ensuremath{\mathbb R}}
\newcommand{\Z}{\ensuremath{\mathbb Z}}
\newcommand{\N}{\ensuremath{\mathbb N}}
\renewcommand{\l}{\lambda}
\renewcommand{\a}{\alpha}
\renewcommand{\b}{\beta}
\newcommand{\fa}{\mathfrak{a}}
\newcommand{\fp}{\mathfrak{p}}
\newcommand{\fm}{\mathfrak{m}}
\newcommand{\fz}{\mathfrak{z}}
\newcommand{\frakg}{\mathfrak{g}}
 \newcommand{\fk}{\mathfrak{k}}
\newcommand{\fq}{\mathfrak{q}}
\newcommand{\fn}{\mathfrak{n}}
\newcommand{\fj}{\mathfrak{j}}
\newcommand{\fs}{\mathfrak{s}}
\newcommand{\ft}{\mathfrak{t}}
\newcommand{\la}{\l_\a}
\newcommand{\fUg}{U(\frakg_\C)}
\newcommand{\fUs}{U(\fs_\C)}
\newcommand{\fUa}{U(\fa_\C)}
\newcommand{\fUk}{U(\fk_\C)}
\newcommand{\fUq}{U(\fq_\C)}
\newcommand\SO{\mathop{\rm{SO}}}
\newcommand\SU{\mathop{\rm{SU}}}
\newcommand\U{\mathop{\rm{U}}}
\newcommand\Sg{\mathop{\rm{S}}}
\newcommand\Sp{\mathop{\rm{Sp}}}
\newcommand\Spin{\mathop{\rm{Spin}}}
\newcommand{\frakacs}{\mathfrak a_{\C}^*}
\newcommand{\polya}{{\rm S}(\frak a_\C)}
\newcommand{\rml}{{\rm l}}
\newcommand{\rmm}{{\rm m}}
\newcommand{\rms}{{\rm s}}
\newcommand{\hyper}[4]{\ensuremath{\sideset{_{_2}}{_{_1}}{\mathop{F}}
\left(#1,#2;#3;#4\right)}}
\newcommand{\Hreg}{A_\C^{\rm reg}}
\newcommand{\Areg}{A^{\rm reg}}
\newcommand{\wt}{\widetilde}
\renewcommand{\phi}{\varphi}
\begin{document}

\makeatletter
\title[Hypergeometric functions of type $BC$]{Differential operators, radial parts and\\ a one-parameter family of hypergeometric functions of type $BC$}
\author{E. K. Narayanan}
\address{Department of Mathematics, Indian Institute of Science, Bangalore -12, India}
\email{naru@math.iisc.ernet.in}
\author{A. Pasquale}
\address{Institut Elie Cartan de Lorraine (UMR CNRS 7502),
Universit\'e de Lorraine, F-57045 Metz, France.}
\email{angela.pasquale@univ-lorraine.fr}

\thanks{The authors gratefully acknowledge financial support by the
Indo-French Centre for the Promotion of Advanced Research (IFCPAR/CEFIPRA),
Project No. 5001--1: ``Hypergeometric functions: harmonic analysis and representation theory''.}

\date{}
\subjclass[2010]{Primary: 33C67; secondary: 43A32, 43A90}
\keywords{}

\begin{abstract}
We introduce the symmetric (respectively, non-symmetric) $\tau_{-\ell}-$hyper\-geo\-metric functions associated with a root system of type $BC$ as joint eigenfunctions of a commutative algebra of differential (respectively, differential-reflection) operators. Under certain conditions on the real parameter $\ell$, we derive their properties (positivity, estimates, asymptotics and boundedness) by establishing the analogous properties for the Heckman-Opdam (symmetric and non-symmetric) hypergeometric functions corresponding to (not necessarily positive) multiplicity functions which are standard.
\end{abstract}

\maketitle

\tableofcontents
\section*{Introduction}

Let $X=G/K$ be a Riemannian symmetric space of the noncompact type. The $L^2$ spherical harmonic analysis on $X$, namely Harish-Chandra's inversion formula and the Helgason-Gangolli-Rosenberg's Plancherel and Paley-Wiener theorems for the spherical transform on $X$, has been extended into two different directions. The first one is the $L^2$ harmonic analysis on root systems, as
developed by Heckman, Opdam and Cherednik from the late 1980s to the middle 1990s; see \cite{HS,OpdamActa} and references therein. This extension originates from the fact that, by restriction to a maximally flat subspace of $X$, the
$K$-invariant functions on $X$ become Weyl group invariant functions on the
Cartan subspace $\mathfrak{a}$ corresponding to the flat subspace.
The analogue of Harish-Chandra's spherical functions are the hypergeometric functions associated with root systems, also known as Heckman-Opdam's hypergeometric functions. The second direction is the $L^2$ harmonic analysis on
homogeneous line bundles on $X$ obtained by Shimeno \cite{ShimenoEigenfunctions,ShimenoPlancherel} at the beginning of the 1990s. Shimeno's work gives something new only when $K$ has a nontrivial center, which leads to the assumption that the space $X$ is Hermitian symmetric.  Some significant steps to extend Shimeno's $L^2$ spherical analysis  to the context of special functions associated with root systems were made by Heckman in \cite[Chapter 5]{HS}.

The present paper deals with some aspects of the $L^1$ spherical harmonic analysis on homogeneous line bundles on $X$ and their generalization to root systems.
It finds its foundations in Heckman's work \cite[Chapter 5]{HS} together with the recent results of \cite{NPP} on the $L^1$ harmonic analysis on root systems.

In the symmetric space case (corresponding to the trivial line bundle), the spherical transform of a $K$-invariant function on $X$ is obtained by integrating against spherical functions. Hence, the spherical functions which are bounded can be identified with the elements of the spectrum of the (commutative) convolution algebra of $L^1$ $K$-invariant functions on $X$. They have been determined by Helgason and Johnson in \cite{HeJo}. Let $\mathfrak a^*$ and $\mathfrak{a}_\C^*$ respectively denote the real and the complex dual of a fixed Cartan subspace $\mathfrak a$.
Consider Harish-Chandra's parametrization of the spherical functions by the elements $\mathfrak{a}_\C^*$ and let $\varphi_\lambda$ denote the spherical function corresponding to $\l\in \mathfrak{a}_\C^*$.
Let $\rho \in \mathfrak a^*$ be defined by (\ref{eq:rho}), and let $C(\rho)$ be the convex hull of the finite set $\{w\rho:w\in W\}$, where $W$ is the Weyl group.
Then the theorem of Helgason and Johnson states that the spherical function $\varphi_\l$ on $X$ is bounded if and only if $\l$ belongs to the tube domain in $\mathfrak a_\C^*$ over $C(\rho)$. It is a fundamental result in the $L^1$ spherical harmonic analysis on $X$. For instance, it implies that the spherical transform of a $K$-invariant $L^1$ function on $X$ is holomorphic in the interior
of the tube domain over $C(\rho)$. In particular, the spherical transform of a $L^1$ function cannot have compact support, unlike what happens for the classical Fourier transform.

The theorem of Helgason and Johnson has been recently extended to Heckman-Opdam's hypergeometric functions in \cite{NPP}. In the context of special functions associated with root systems, the symmetric space $X$ is replaced by a triple $(\mathfrak{a},\Sigma,m)$ where
$\mathfrak{a}$ is a Euclidean real vector space (playing the role of the Cartan subspace of $X$), $\Sigma$ is a root system in the real dual $\mathfrak{a}^*$ of $\mathfrak{a}$, and $m:\Sigma\to \C$ is a multiplicity function. As in the geometric situation, the hypergeometric functions $F_\lambda$ on $(\mathfrak{a},\Sigma,m)$ are parametrized by $\mathfrak{a}_\C^*$,  and the same formula used in the geometric case defines a certain element $\rho \in \mathfrak a^*$. As shown in \cite{NPP}, the hypergeometric function $F_\l$ is bounded if and only if $\l$ belongs to the tube domain in $\mathfrak a_\C^*$ over $C(\rho)$.
Despite the perfect analogy of these statements, the proofs for the hypergeometric
case involve new tools, due to the lack of integral formulas for the Heckman-Opdam's hypergeometric functions.

For homogeneous line bundles over a Hermitian symmetric space $X=G/K$ and their generalizations over root systems, the $L^1$ spherical (or hypergeometric) harmonic analysis has not been studied. In particular, in the literature there is no characterization of the corresponding spherical or hypergeometric functions which are bounded. In this paper, we provide such a characterization under certain assumptions which are related to the notion of standard multiplicities from \cite{HS}.

To state more precisely the results of the paper, we have to fix some notation. (We
refer to subsection \ref{subsection:hermitian} for more details.) Suppose for simplicity that $G \subset G_\C$, where the complexification $G_\C$ is connected and simply connected, and that $K$ is maximal compact in $G$. The one dimensional unitary representations of $K$ are parametrized by $\mathbb{Z}$.
For $\ell \in \mathbb{Z}$, let $E_\ell$ denote the homogenous line bundle on $G/K$ associated with the one dimensional representation $\tau_\ell$ of $K$. The smooth $\tau_{-\ell}-$spherical sections of $E_\ell$ can be identified with the space $C_\ell^\infty(G)$ of functions on $G$ satisfying
$f(k_1gk_2)=\tau_{-\ell}(k_1k_2)f(g)$ for all $g\in G$ and $k_1,k_2\in K$. Let
$\D_\ell(G/K)$ denote the (commutative) algebra of the $G$-invariant differential operators on $E_\ell$. Then the $\tau_{-\ell}$-spherical functions on $G/K$ are the (suitably normalized) joint eigenfunctions of $\D_\ell(G/K)$ belonging to $C_\ell^\infty(G)$.  For a fixed $\ell$, they are parametrized by $\lambda\in \mathfrak a_\C^*$ (modulo the Weyl group). We denote by $\varphi_{\ell,\lambda}$ the $\tau_{-\ell}-$spherical function of spectral parameter $\lambda$.
If we suppose that the trivial representation of $K$ corresponds to $\ell=0$, then $C_0^\infty(G)$ is the space of the smooth $K$-invariant functions on $G/K$, $\D_0(G/K)$ coincides with the algebra $\D(G/K)$ of $G$-invariant differential operators on $G/K$, and the $\tau_{0}$-spherical functions $\varphi_{0,\lambda}$ are precisely Harish-Chandra's spherical functions $\varphi_{\lambda}$ on $G/K$.

By the decomposition $G=KAK$, a function $f\in C_\ell^\infty(G)$ is uniquely determined by its restriction to the Cartan supspace $\mathfrak{a}\equiv A$, and this restriction is Weyl group invariant. As proved by Shimeno \cite{ShimenoEigenfunctions, ShimenoPlancherel} and Heckman \cite[Chapter 5]{HS}, the restriction to $\mathfrak{a}$ of the $\tau_{-\ell}$-spherical function
$\varphi_{\ell,\lambda}$ coincides  -- up to multiplication by the power $u^{-\ell}$ of a certain function $u$ which only depends on the root structure of $G/K$, see \eqref{eq:u} -- with the Heckman-Opdam hypergeometric function $F_\lambda$ for a multiplicity function $m(\ell)$ which is a deformation of the multiplicity function $m$ of the symmetric space $G/K$; see \eqref{eq:mult-l}. Recall that the root system $\Sigma$ of $G/K$ is of type $BC_r$
and that the multiplicity $m$ is a triple of nonnegative integers $(m_\rms,m_\rmm,m_\rml)$ with $m_\rml=1$.
This leads to the definition of the functions
\begin{equation}
\label{Fell-intro}
F_{\ell,\lambda}(m,x)=u(x)^{-\ell} F_\lambda(m(\ell),x)\,, \qquad (x\in \mathfrak{a})
\end{equation}
for $\ell\in \C$ and arbitrary triples $(\mathfrak{a},\Sigma,m)$ with $\Sigma$ of type $BC_r$ and $m_\rml=1$. These functions have already been considered in \cite[Chapter 5]{HS} as well as in other related papers, as in \cite{HoOl14} and \cite{Oda14}. Following the terminology of the geometric case, we will call them \textit{$\tau_{-\ell}-$hypergeometric functions}.  Likewise, one can introduce
\textit{non-symmetric $\tau_{-\ell}-$hypergeometric functions} $G_{\ell,\lambda}(m,x)$, linked to Opdam's nonsymmetric hypergeometric functions $G_\lambda$
by the relation
\begin{equation}
\label{Gell-intro}
G_{\ell,\lambda}(m,x)=u(x)^{-\ell} G_\lambda(m(\ell),x)\,, \qquad (x\in \mathfrak{a})\,.
\end{equation}

The equation \eqref{Fell-intro} naturally suggests to deduce the properties
of the $\tau_{-\ell}-$hypergeometric functions (and hence of the  $\tau_{-\ell}-$spherical functions in the geometric case), out of those of the
hypergeometric functions $F_\lambda$.
However, even if $m$ is nonnegative (i.e. $m_a\geq 0$ for
$a\in \{\rms,\rmm,\rml\}$), the deformed multiplicity function $m(\ell)$ is not. In the literature, all the detailed properties of the Heckman-Opdam hypergeometric functions $F_\lambda(m,x)$ (such as positivity results, asymptotics, sharp estimates, boundedness) are known only under the assumption that $m$ is nonnegative. So they cannot be applied to the present situation. To our knowledge, the only exceptions are the estimates determined in \cite{HoOl14}. But also for using them, we will need to further extend the domain of multiplicities where they hold.

Let $\mathcal{M}$ be the set of all real-valued multiplicity functions on the fixed root system $\Sigma$, and set
$$
\mathcal{M}_1=\{(m_\rms,m_\rmm,m_\rml)\in \mathcal{M}:  m_\rmm> 0, m_\rms> 0, m_\rms+2m_\rml> 0\}\,.
$$
The elements of $\mathcal{M}_1$ were called in \cite[Definition 5.5.1]{HS}
the \textit{standard multiplicities}. They contain the positive multiplicity functions and their definition is motivated by the regularity of Harish-Chandra's $c$-function; see \cite[Lemma 5.5.2]{HS} and \eqref{eq:b-nonzero} below. Observe that $\rho(m)\neq 0$ for $m\in \mathcal{M}_1$, but $\rho(m)=0$ for nonzero multiplicities $m$ on the boundary of $\mathcal{M}_1$.

Consider the Heckman-Opdam's (symmetric and non-symmetric) hypergeometric functions $F_\lambda(m,x)$ and $G_\lambda(m,x)$; see \ref{subsection:hyp} for the definitions.
Under the condition that $m\in \mathcal{M}_1$ we prove:
\begin{enumerate}
\item $F_\lambda$ and $G_\lambda$ are real and strictly positive on $\mathfrak{a}$ for all $\l\in \mathfrak{a}^*$,
\item $|F_\lambda|\leq F_{\Re\lambda}$ and $|G_\lambda|\leq G_{\Re\lambda}$
on $\mathfrak{a}$ for all $\l\in \mathfrak{a}^*_\C$,
\item $\max\{|F_\lambda(x)|, |G_\lambda(x)|\}\leq \sqrt{|W|} e^{\max_{w\in W}(w\l)(x)}$ for all $\lambda \in \mathfrak{a}^*_\C$ and $x\in
\mathfrak{a}$,
\item $F_{\lambda+\mu}(x)\leq F_{\mu}(x) e^{\max_{w\in W}(w\l)(x)}$
and $G_{\lambda+\mu}(x)\leq G_{\mu}(x) e^{\max_{w\in W}(w\l)(x)}$ for all
$\lambda \in \mathfrak{a}^*$, $\mu \in \overline{(\mathfrak{a}^*)^+}$ and $x\in
\mathfrak{a}$,
\item asymptotics on $\mathfrak{a}$ for $F_\lambda$ when $\lambda\in  \mathfrak{a}^*_\C$ is fixed but not necessarily regular,
\item sharp estimates on $\mathfrak{a}$  for $F_\lambda$ when $\lambda \in \mathfrak{a}^*$ is fixed but not necessarily regular.
\end{enumerate}
The estimates (but not the asymptotics) pass by continuity to the boundary of
$\mathcal{M}_1$ as well. The above properties extend to $\mathcal{M}_1$ the corresponding properties proved for nonnegative multiplicities by Opdam \cite{OpdamActa}, Schapira \cite{Sch08}, R\"osler, Koornwinder and Voit  \cite{RKV13}, and the authors and Pusti \cite{NPP}. (The estimates (3) hold
in fact for a larger set of multiplicities than $\mathcal{M}_1$; see Lemma \ref{lemma:OpdamM2M3}.)

To deduce from the above and \eqref{Fell-intro}-\eqref{Gell-intro}, the corresponding properties of the $\tau_{-\ell}-$hyper\-geo\-metric functions $F_{\ell,\lambda}$ and $G_{\ell,\lambda}$, we consider the deformed multiplicity functions $m(\ell)$, where $m$ is nonnegative, as special instances of standard multiplicities. This is possible provided the range of $\ell$ is limited to $\ell\in ]\ell_{\rm min},\ell_{\rm max}[$, where $\ell_{\rm min}=-\frac{m_\rms}{2}$ and $\ell_{\rm max}=\frac{m_\rms}{2}+1$. Notice that,
since $F_{-\ell,\lambda}=F_{\ell,\lambda}$, see \eqref{eq:Fellm-even-ell}, the properties of $F_{\ell,\lambda}$ with $\ell\in ]\ell_{\rm min},\ell_{\rm max}[$   automatically extend to the larger domain $|\ell|<\ell_{\rm max}$.

The resulting properties of the $\tau_{-\ell}-$hypergeometric functions allow us to characterize those that are bounded, provided $|\ell|<\ell_{\rm max}$:
exactly as in the case $\ell=0$, the function $F_{\ell,\lambda}$ is bounded if and only if $\lambda$ is in the tube in $\mathfrak{a}_\C^*$ above $C(\rho(m))$; see Theorem \ref{thm:bdd}. This in particular yields, under the same assumptions on $\ell$, the characterization of the bounded $\tau_{-\ell}-$spherical functions.
One has to point out that, in the geometric case, one of the two directions of this
characterization (without any restriction on $\ell$) is an immediate consequence of the integral formula \eqref{eq:integral-formula} for $\varphi_{\ell,\lambda}$ together with the corresponding property for Harish-Chandra's spherical functions; see Corollary \ref{Cor:estimates-spherical}. On the other hand, unless $\ell=0$, the integral formula does not show our basic result that $\varphi_{\ell,\lambda}$ is positive for $|\ell|<\ell_{\rm max}$.

Our paper is organized as follows. In section 1 we introduce the notation and collect some preliminary results on line bundles over Hermitian symmetric spaces and on the Heckman-Opdam's theory of hypergeometric functions. In particular, we recall the definition of the Heckman-Opdam hypergeometric functions as joint eigenfunctions of a certain commutative algebra $\D(m)$ of finite rank. The algebra $\D(m)$ is explicitly constructed inside the algebra $\D^W_\mathcal{R}$ of Weyl group invariant differential operators on $A\equiv \mathfrak{a}$ with coefficients in a specific algebra $\mathcal{R}$ of functions; see \eqref{eq:galpha} and Lemma \ref{lemma:commutatorL}.

In section 2 we restrict ourselves to the geometric case and consider the $\tau_{-\ell}-$radial components $\delta_\ell(D)$ of the differential operators $D\in \D_\ell(G/K)$.  For the Casimir operator $\Omega$, the
$\tau_{-\ell}-$radial component $\delta_\ell(\Omega)$ has been computed (with different methods) by Shimeno \cite[Lemma 2.4]{ShimenoPlancherel} and by Heckman \cite[Theorem 5.1.7]{HS}. The nature of the $\tau_{-\ell}-$radial components of the other elements of $\D_\ell(G/K)$ can be deduced from the work of Harish-Chandra  \cite{HC60}: they are Weyl group invariant differential operators on $A\equiv \mathfrak{a}$ with coefficients in a certain algebra $\mathcal{R}_0$, see \eqref{eq:R0}. The case of the trivial representation (i.e. when $\ell=0$) is special. Indeed, as proved by Harish-Chandra \cite[sections 5 and 6]{HC58}, the radial component $\delta_0(D)$ of any differential operator $D\in\D(G/K)$ belongs to $\D^W_\mathcal{R}$. Here $\mathcal{R}$ is the same algebra appearing in Heckman-Opdam's theory. In fact, in the geometric case, the algebra $\D(m)\subset \D^W_\mathcal{R}$ constructed by Heckman and Opdam is precisely $\delta_0(\D(G/K))$.
The algebra $\mathcal{R}_0$ for the case $\ell\neq 0$ is bigger than $\mathcal{R}$. In \cite[Theorem 5.1.4]{HS}, Heckman states the crucial fact that the smaller algebra $\mathcal{R}$ suffices for all $\ell\in \Z$. One can see it
for $\delta_\ell(\Omega)$ directly from its explicit formula, for the other differential operators one needs more work to prove this fact. We do it in Theorem
\ref{thm:coefficients-taul-radial}, which is the main result of section 2. Its proof uses Harish-Chandra's construction, which we outline for the case of the one-dimensional $K$-type $\tau_{-\ell}$ in Proposition \ref{prop:radial1}, and the vanishing of $\tau_{-\ell}$ on certain elements appearing in this construction, see Lemma \ref{lemma:vanishing-tauell}.

The fact that $\mathcal{R}$ suffices for all $\ell\in \Z$ is crucial because one needs it to identify $\delta_\ell(\D_\ell(G/K))$ inside
$\D^W_\mathcal{R}$ as $u^{-\ell} \circ \D(m(l)) \circ u^{\ell}$, where
$\D(m(l))$ is as in the usual Heckman-Opdam's theory. Having this, even in the case of non-geometric triples $(\mathfrak{a},\Sigma,m)$ and $\ell\in \C$, one can define the $\tau_{-\ell}-$hypergeometric functions as (Weyl group invariant, regular and suitably normalized) joint eigenfunctions of the commutative algebra $\D_\ell(m):=u^{-\ell} \circ \D(m(l)) \circ u^{\ell}$,
and they will coincide with restriction to $\mathfrak{a}$ of the $\tau_{-\ell}-$spherical functions in the geometric case.

The (symmetric and nonsymmetric) $\tau_{-\ell}-$hypergeometric functions $F_{\ell,\lambda}$ and $G_{\ell,\lambda}$ are defined in section 3. For the nonsymmetric ones, we introduce Cherednik-like
operators and we compute the $\tau_{-\ell}-$Heckman-Opdam Laplacian.
All definitions and properties in the first part of the section are a straightforward consequence of the fact that the algebra $\D_\ell(m)$ is obtained by conjugation
by $u^{-\ell}$ of the algebra $\D(m(l))$, for which all these objects are defined.
In particular, we recover \eqref{Fell-intro} and \eqref{Gell-intro}. In the last part of this section, we look more closely at the two main examples: the geometric case and the rank-one case.

The analysis of  $\tau_{-\ell}-$hypergeometric functions starts in section 4. First we study the basic properties (positivity, estimates and asymptotics) of the Heckman-Opdam's (symmetric and non-symmetric) hypergeometric functions $F_{\lambda}$ and $G_{\lambda}$ for multiplicity functions $m$ that need not be nonnegative. We single out several sets of real-valued multiplicities where the above properties hold. All these properties turn out to be satisfied on the set $\mathcal{M}_1$ of standard multiplicities. In the later part of the section, we deduce the corresponding properties for the $\tau_{-\ell}-$hypergeometric functions $F_{\ell,\lambda}$ and $G_{\ell,\lambda}$ for the values of $\ell\in \R$ for which the deformed multiplicity $m(\ell)$ is standard. In the final section we use the results of section 4 to prove the characterization of the bounded $\tau_{-\ell}-$hypergeometric functions $F_{\ell,\lambda}$ under the assumption that $|\ell|<\ell_{\rm max}$ (Theorem \ref{thm:bdd}).

\section{Notation and preliminaries} \label{section:notation}

We shall use the standard notation $\N$, $\N_0$, $\Z$, $\R$,
$\C$ and $\C^\times$ for the positive integers, the nonnegative integers, the
integers, the reals, the complex numbers and the nonzero
complex numbers. The symbol $A\cup B$
denotes the union of $A$ and $B$, whereas $A \sqcup B$ indicates
their disjoint union.
If $F(m)$ is a function on a space $X$ that depends on a parameter $m$,
we will denote its value at $x\in X$ by $F(m;x)$ rather than $F(m)(x)$.

Given two nonnegative functions $f$ and $g$
on a same domain $D$, we write $f \asymp g$ if there exists
positive constants $C_1$ and $C_2$ so that $C_1 \leq
\frac{f(x)}{g(x)} \leq C_2$ for all $x \in D$.

\subsection{Root systems}
\label{subsection:roots}

Let $\frak a$ be an $r$-dimensional real Euclidean vector space
with inner product $\inner{\cdot}{\cdot}$, and let $\frak a^*$ be
the dual space of $\frak a$. For $\l\in \frak a^*$ let $x_\l \in
\mathfrak a$ be determined by the condition that
$\l(x)=\inner{x}{x_\l}$ for all $x \in \mathfrak a$. The assignment
$\inner{\l}{\mu}:=\inner{x_\l}{x_\mu}$ defines an inner product in
$\frak a^*$. Let $\frak a_\C$ and $\frak a_\C^*$ respectively
denote the complexifications of $\frak a$ and $\frak a^*$. The
$\C$-bilinear extension to $\frak a_\C$ and $\frak a_\C^*$ of the
inner products on $\frak a^*$ and $\frak a$ will also be indicated
by $\inner{\cdot}{\cdot}$.
We denote by  $|\cdot|=\inner{\cdot}{\cdot}^{1/2}$ the associated norm.
We shall often employ the notation
\begin{equation}\label{eq:la}
  \la:=\frac{\inner{\l}{\a}}{\inner{\a}{\a}}
\end{equation}
for elements $\l,\a\in \fa_\C^*$ with $|\a|\neq 0$.

Let $\Sigma$ be a root system in $\frak a^*$
with set of positive roots of the form
$\Sigma^+=\Sigma^+_\rms \sqcup \Sigma^+_\rmm \sqcup \Sigma^+_\rml$, where
\begin{equation}
\label{eq:roots}
\Sigma^+_\rms=\big\{\frac{\beta_j}{2}: 1\leq j \leq r\big\}\,,  \notag\, \quad
\Sigma^+_\rmm=\big\{\frac{\beta_j\pm \beta_i}{2}: 1\leq i < j \leq r\big\}\,,\quad
\Sigma^+_\rml=\{\beta_j: 1\leq j \leq r\} \,.
\end{equation}
We assume that the elements of $\Sigma^+_\rml$ form an orthogonal basis of $\fa^*$ and that they all have the same norm $p$.
We denote by $W$ the Weyl group of $\Sigma$. It is the finite group of orthogonal transformations in $\fa^*$  generated by the reflections $r_\a$, with $\a \in \Sigma$,  defined by $r_\a(\l):=\l-2 \la \a$ for all $\l \in \mathfrak a^*$.  It acts on the roots by permutations and sign changes. For  $a\in \{\rms,\rmm,\rml\}$ set $\Sigma_a=\Sigma^+_a \sqcup (-\Sigma^+_a)$.  Then each $\Sigma_a$ is a Weyl group orbit in $\Sigma$.

A multiplicity function on $\Sigma$ is a $W$-invariant function on $\Sigma$. It is therefore given by a triple $m=(m_\rms,m_\rmm,m_\rml)$ of complex numbers so that $m_{a}$ is the (constant) value of $m$ on $\Sigma_{a}$ for ${a}\in \{\rms,\rmm,\rml\}$.
We suppose in the following that $m_\rml=1$.

It will be convenient to refer to a root system $\Sigma$ as above as of type $BC_r$ even if some values of $m$ are zero. This means that root systems of type $C_r$ will be considered as being of type $BC_r$ with $m_\rms=0$ and $m_\rmm\neq 0$. Likewise, the direct products of rank-one root systems $(BC_1)^r$ and $(A_1)^r$ will be considered of type $BC_r$ with $m_\rmm=0$ and with $m_\rmm=m_\rms=0$, respectively.

The dimension $r$ of $\frak a$ is called the \emph{(real) rank} of
the triple $(\frak a, \Sigma, m)$. Finally, we set
\begin{equation}
\label{eq:rho}
 \rho(m)=\frac{1}{2} \sum_{\a \in \Sigma^+} m_\a \a=\frac{1}{2}  \sum_{j=1}^r \Big(\frac{m_\rms}{2}+1+(j-1)m_\rmm\Big) \beta_j \in \fa^*\,.
\end{equation}

\begin{Rem}
\label{rem:hermitian}
For special values of the multiplicities $m=(m_\rms,m_\rmm,m_\rml)$, triples $(\frak a, \Sigma, m)$ as above appear as restricted root systems of irreducible Hermitian symmetric spaces $G/K$ of the noncompact type.  See \cite{He1} or \cite{ShimenoPlancherel}. See also subsection \ref{subsection:hermitian} below.
\medskip

\begin{table}[h]
\setlength{\extrarowheight}{2pt}
\caption{Geometric multiplicities}
\begin{tabular}{|c|c|c|c|}
\hline
$G$ &$K$ & $\Sigma$ &$(m_\rms,m_\rmm,m_\rml)$ \\[.2em]
\hline\hline
$\SU(p,q)$ & $\Sg(\U(p)\times \U(q))$ &
\begin{tabular}{l}$p=q$: $C_p$ \\$p<q$: $BC_p$ \end{tabular}
&\begin{tabular}{l} $p=q$: $(0,2,1)$\\ $p<q$: $(2(q-p),2,1)$\end{tabular}  \\[.2em]
\hline
$\SO_0(p,2)$ &$\SO(p)\times\SO(2)$ & $C_2$ &$(0,p-2,1)$ \\[.2em]
\hline
$\SO^*(2n)$ & $\U(n)$ & \begin{tabular}{l} $n$ even: $C_n$\\
$n$ odd: $BC_n$ \end{tabular}  & \begin{tabular}{l} $n$ even: $(0,4,1)$\\ $n$ odd: $(4,4,1)$\end{tabular} \\[.2em]
\hline
$\Sp(n,\R)$ &$\U(n)$ & $C_n$ &$(0,1,1)$ \\[.2em]
\hline
$\mathfrak{e}_{6(-14)}$ &$\Spin(10)\times\U(1)$ & $BC_2$ &$(8,6,1)$ \\[.2em]
\hline
$\mathfrak{e}_{7(-25)}$ &$\ad(\mathfrak{e}_6)\times \U(1)$ & $C_3$ &$(0,8,1)$ \\[.2em]
\hline
\end{tabular}
\end{table}

The literature on Hermitian symmetric spaces refers to the situations where $\Sigma=C_r$ and
$\Sigma=BC_r$ as to the \textit{Case I} and the \textit{Case II}, respectively.
The values $m=(m_\rml,m_\rmm,m_\rms)$ appearing in Table 1  will be called \textit{geometric multiplicities}.
\end{Rem}

Notice that in this paper we adopt the notation commonly used in the theory of symmetric spaces. It differs from the notation in the work of Heckman and Opdam in the following ways. The root system $R$ and the multiplicity function $k$ used by Heckman and Opdam
are related to our $\Sigma$ and $m$ by the relations $R=\{2\a:\a \in \Sigma\}$ and $k_{2\a}=m_\a/2$ for $\a \in \Sigma$.

The complexification $\frak a_\C$ of $\frak a$ can be viewed as
the Lie algebra of the complex torus $A_\C:= \frak a_\C / \Z\{i\pi
x_\a: \a \in \Sigma\}$. We write $\exp: \frak a_\C \rightarrow
A_\C$ for the exponential map, with multi-valued inverse $\log$.
The split real form $A:=\exp \frak a $ of $A_\C$ is an abelian
subgroup with Lie algebra $\frak a$ such that $\exp: \frak a
\rightarrow A$ is a diffeomorphism. In the following, to simplify
the notation, we shall identify $A$ with $\mathfrak a$ by means of
this diffeomorphism.

The action of $W$ extends to $\frak a$ by duality,
and to $\frakacs$ and $\frak a_\C$ by $\C$-linearity.
For $\lambda \in \fa_\C^*$ we set $W_\lambda=\{w\in W:w\lambda=\lambda\}$.
Furthermore, the action of $W$ on a space $X$ extends to an action on functions $f$ on $X$
by $(wf)(x):=f(w^{-1}x)$, $w \in W$, $x\in X$.

The positive Weyl chamber $\frak a^+$ consists of the elements  $x\in\frak a$ for which
$\a(x)>0$ for all $\a \in \Sigma^+$; its closure is
$\overline{\frak a^+}=\{x \in \frak a: \text{$\a(x) \geq 0$ for all $\a \in \Sigma^+$}\}$.
Dually, the positive Weyl chamber $(\frak a^*)^+$ consists of the elements  $\l\in\frak a^*$ for which
$\inner{\l}{\a}>0$ for all $\a \in \Sigma^+$. Its closure is denoted $\overline{(\frak a^*)^+}$.
%We set $A^+:=\exp \frak a^+$.
The sets $\overline{\frak a^+}$ and $\overline{(\frak a^*)^+}$
are fundamental domains for the action of $W$ on $\frak a$ and $\frak a^*$, respectively.

The restricted weight lattice of $\Sigma$ is
  $P=\{\l \in \frak a^*:\la\in\Z \quad \text{for all $\a \in\Sigma$}\}.$
Observe that $\{2\a:\a\in \Sigma\} \subset P$.
If $\l \in P$, then the exponential $e^\l:A_\C\rightarrow \C$ given
by $e^\l(h):=e^{\l(\log h)}$ is single valued. The $e^\l$ are the algebraic characters
of $A_\C$. Their $\C$-linear span coincides with the ring
of regular functions $\C[A_\C]$ on the affine algebraic variety $A_\C$.
The lattice $P$ is $W$-invariant, and the Weyl group acts on $\C[A_\C]$
according to $w(e^\l):=e^{w\l}$.
The set $\Hreg:=\{h \in A_\C: e^{2\a(\log h)}\neq 1 \ \text{for all $\a \in \Sigma$}\}$
consists of the regular points of $A_\C$ for the action of $W$. Notice that $A^+ \equiv \mathfrak a^+$ is a subset of $\Hreg$.  The algebra $\C[\Hreg]$ of regular functions on $\Hreg$
is the subalgebra of the quotient field of $\C[A_\C]$ generated by
$\C[A_\C]$ and by $1/(1-e^{-2\a})$ for $\a \in \Sigma^+$.
%Its $W$-invariant elements form the subalgebra $\C[\Hreg]^W$.

\subsection{Cherednik operators and hypergeometric functions}
\label{subsection:hyp}
In this subsection, we review some basic notions on the hypergeometric functions associated with root systems. This theory has been developed by Heckman, Opdam and Cherednik. We refer the reader to \cite{HS}, \cite{OpdamActa}, and \cite{Opd00} for more details and further references.

Let $\polya$ denote the symmetric algebra over $\frak a_\C$ considered as the
space of polynomial functions on $\frakacs$, and let $\polya^W$ be the
subalgebra of $W$-invariant elements.
Every $p \in \polya$ defines a
constant-coefficient differential operators $p(\partial)$ on
$A_\C$ and on $\frak a_\C$
such that $\xi(\partial)=\partial_\xi$ is the directional derivative in the
direction of $\xi$ for all $\xi \in \frak a$. The algebra of the differential operators $p(\partial)$
with $p \in \polya$ will also be indicated by $\polya$.

Let $\mathcal{R}$ denote the subalgebra of $\C[\Hreg]$ generated by the functions $1$ and
\begin{equation}
\label{eq:galpha}
g_\a=(1-e^{-2\a})^{-1} \qquad (\a\in \Sigma^+)\,.
\end{equation}
Notice that for $\xi\in \fa$ and $\a\in \Sigma^+$ we have
\begin{eqnarray}
\partial_\xi g_\a&=&2\a(\xi)  \big(g_\a^2-g_\a\big) \notag\\
\label{eq:fminusa}
g_{-\a}&=&1-g_\a\,.
\end{eqnarray}
Hence $\mathcal{R}$ is stable under the actions of  $S(\fa_\C)$ and of the Weyl group.
Let $\D_\mathcal{R}=\mathcal{R}\otimes S(\fa_\C)$ be the algebra of differential operators on $\Hreg$ with coefficients in $\mathcal{R}$. The Weyl group $W$ acts on $\D_\mathcal{R}$
according to
\begin{equation*}
w\big(\phi\otimes p(\partial)):=w\phi \otimes (wp)(\partial).
\end{equation*}
We indicate by $\D_\mathcal{R}^W$ the subalgebra of $W$-invariant elements of $\D_\mathcal{R}$.
The space
$\D_\mathcal{R} \otimes \C[W]$ can be endowed with the structure of an associative algebra
with respect to the product
\begin{equation*}
 (D_1 \otimes w_1)\cdot (D_2 \otimes w_2)=D_1w_1(D_2) \otimes w_1w_2,
\end{equation*}
where the action of $W$ on differential operators is defined by
$(wD)(wf):=w(Df)$ for every sufficiently differentiable function
$f$.
Considering $D \in \D_\mathcal{R}$ as an element of $\D_\mathcal{R} \otimes \C[W]$, we shall
usually write $D$ instead of $D \otimes 1$. The elements of the
algebra $\D_\mathcal{R} \otimes \C[W]$ are differential-reflection operators on $\Hreg$.
The differential-reflection operators act on functions $f$ on $\Hreg$
according to $(D\otimes w)f:=D(wf)$.

The differential component of a differential-reflection operator $P \in \D_{\mathcal R}\otimes \C[W]$ is the differential operator $\beta(D)\in\D_{\mathcal R}$ such that
\begin{equation}
\label{eq:beta}
Pf=\beta(P)f\,.
\end{equation}
for all $f\in \C[A_\C]^W$. If $P=\sum_{w\in W} P_w\otimes w$ with $P_w\in \D_{\mathcal R}$,
then $\beta(P)=\sum_{w\in W} P_w$.

For $\xi \in \frak a$ the Cherednik operator (or Dunkl-Cherednik operator\/)  $T_\xi(m)\in \D_\mathcal{R} \otimes \C[W]$
is defined by
\begin{equation}
\label{eq:T}
  T_\xi(m):=
\partial_\xi-\rho(m;\xi)+\sum_{\a\in \Sigma^+} m_\a \a(\xi) (1-e^{-2\a})^{-1}
\otimes (1-r_\a)
\end{equation}
(Recall the notation $\rho(m;\xi)$ for $\rho(m)(\xi)$.)

The Cherednik operators can be considered as operators acting
on smooth functions on $\frak a$ (or $A$). This is possible because, as can
be seen from the Taylor formula, the term $1-r_\a$ cancels the
apparent singularity arising from the denominator $1-e^{-2\a}$.

The Cherednik operators commute with each other (cf.
\cite{OpdamActa}, Section 2). Therefore the map  $\xi \to T_\xi(m)$
on $\frak a$ extends uniquely to an algebra homomorphism $p
\to T_p(m)$ of $\polya$ into $\D_\mathcal{R} \otimes \C[W]$.
If $p\in \polya^W$, then $D_p(m):=\beta\big(T_p(m)\big)$ turns out to be in $\D_\mathcal{R}^W$.

Let $p_L \in \polya^W$ be defined by $p_L(\l):=\inner{\l}{\l}$ for $\l \in \frakacs$.
Then $T_{p_L}(m)=\sum_{j=1}^r T_{\xi_j}(m)^2$, where  $\{\xi_j\}_{j=1}^r$ is any orthonormal basis of
$\fa$, is called the Heckman-Opdam Laplacian. Explicitly,
\begin{equation}
\label{eq:HOLaplacian}
T_{p_L}(m)=L(m)+\inner{\rho(m)}{\rho(m)}-\sum_{\a\in \Sigma^+} m_\a \frac{\inner{\a}{\a}}{\sinh^2\alpha} \otimes (1-r_\a)\,,
\end{equation}
where
\begin{equation}
    \label{eq:Laplm}
  L(m):=L_{\mathfrak a}+\sum_{\a \in \Sigma^+} m_\a \,\coth\a \;
       \partial_{x_\a}\,,
\end{equation}
$L_{\mathfrak a}$ is the Laplace operator on $\mathfrak a$, and
$\partial_{x_\a}$ is the directional derivative in the direction
of the element $x_\a\in \fa$  corresponding to $\a$ in the
identification of $\fa$ and $\fa^*$ under $\inner{\cdot}{\cdot}$,
as at the beginning of subsection \ref{subsection:roots}.  See
\cite[(1)]{Sch08} and \cite[\S 2.6]{SchThese} for the computation
of \eqref{eq:HOLaplacian}. In \eqref{eq:HOLaplacian} and
(\ref{eq:Laplm}) we have set %%
\begin{eqnarray}
\label{eq:sinh-alpha}
\sinh \a&=&\frac{e^\a-e^{-\a}}{2}=\frac{e^\a}{2}\, (1-e^{-2\a})\,,\\
\label{eq:coth-alpha}
\coth \a&=&\frac{1+e^{-2\a}}{1-e^{-2\a}}=\frac{2}{1-e^{-2\a}}-1\,.
\end{eqnarray}
Moreover,
$$D_{p_L}(m) =L(m)+ \inner{\rho(m)}{\rho(m)}\,.$$

Set $\D(m):=\{D_p(m):p \in \polya^W\}$. The map $\gamma(m): \D(m)\rightarrow \polya^W$ defined by
\begin{equation}
  \label{eq:HChomo}
\gamma(m)\big(D_p(m)\big)(\l):=p(\l)
\end{equation}
is called the Harish-Chandra homomorphism.
It defines an algebra isomorphism of $\D(m)$ onto $\polya^W$ (see \cite[Theorem 1.3.12 and Remark 1.3.14]{HS}). From Chevalley's theorem it therefore follows that
$\D(m)$ is generated by $r (=\dim \frak a)$ elements. The next lemma will play a decisive role for us.

\begin{Lemma}
\label{lemma:commutatorL}
For every multiplicity function $m$, the algebra $\D(m)$ is the centralizer of $L(m)$ in $\D_\mathcal{R}^W$.
\end{Lemma}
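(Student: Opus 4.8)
The statement is an equality of two subalgebras of $\D_\mathcal{R}^W$, and one inclusion is free: by the displayed identity $D_{p_L}(m)=L(m)+\inner{\rho(m)}{\rho(m)}$, the operator $L(m)$ differs from $D_{p_L}(m)\in\D(m)$ by a scalar, and $\D(m)$ is commutative since $\gamma(m)$ is an algebra isomorphism onto the commutative algebra $\polya^W$; hence every element of $\D(m)\subseteq\D_\mathcal{R}^W$ commutes with $L(m)$, so $\D(m)$ lies in the centralizer of $L(m)$ in $\D_\mathcal{R}^W$. The plan for the reverse inclusion is an induction on the differential order of an operator $D\in\D_\mathcal{R}^W$ commuting with $L(m)$.

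In the base case $D$ has order $0$, i.e.\ $D=\phi$ is multiplication by a function $\phi\in\mathcal{R}$. Then $[L(m),\phi]$ has order at most $1$, and its first-order part comes only from the Laplace summand $L_{\mathfrak a}$ of $L(m)$: it equals $2\sum_i(\partial_{\xi_i}\phi)\,\partial_{\xi_i}$ for an orthonormal basis $\{\xi_i\}$ of $\mathfrak a$, because the terms $\coth\a\,\partial_{x_\a}$ contribute only to order $0$. From $[D,L(m)]=0$ this part vanishes, so $\partial_{\xi_i}\phi=0$ for all $i$, $\phi$ is constant, and constants lie in $\D(m)$.

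For the inductive step, suppose $D\in\D_\mathcal{R}^W$ commutes with $L(m)$ and has order $d\geq 1$, with principal symbol $\sigma_d(D)\in\mathcal{R}\otimes\polya$, homogeneous of degree $d$ in the cotangent variable and (since $D$ is $W$-invariant) $W$-invariant. Since the order-$(d+1)$ symbol of a commutator $[D,L(m)]$ is the Poisson bracket $\{\sigma_d(D),\sigma_2(L(m))\}$ on $T^*\mathfrak a$ and $[D,L(m)]=0$, we get $\{\sigma_d(D),p_L\}=0$ with $p_L(\xi)=\inner{\xi}{\xi}=\sigma_2(L(m))$. As $p_L$ is independent of the base point, this bracket is twice the derivative of the coefficients of $\sigma_d(D)$ along the base direction $x_\xi\in\mathfrak a$ attached to $\xi$, so for every $\xi$ the function $x\mapsto\sigma_d(D)(x,\xi)$ is constant along each line $x+t\,x_\xi$. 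The coefficients of $\sigma_d(D)$ are polynomials in the functions $g_\a=(1-e^{-2\a})^{-1}$; letting $t\to\pm\infty$ along a regular $\xi$ (so $\a(x_\xi)\neq 0$ for all $\a\in\Sigma$) sends each $g_\a(x+t\,x_\xi)$ to $1$ or to $0$, so $\sigma_d(D)(x,\xi)$ equals a constant depending only on the Weyl chamber of $x_\xi$, forcing $\sigma_d(D)$ to be independent of $x$. Then $r:=\sigma_d(D)$ lies in $\polya^W$, and since each Cherednik operator $T_{\xi_i}(m)$ has differential order $1$ with top-order part $\partial_{\xi_i}$, the operator $Q:=D_r(m)=\beta(T_r(m))$ lies in $\D(m)\subseteq\D_\mathcal{R}^W$ and has principal symbol $r$. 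Hence $D-Q\in\D_\mathcal{R}^W$ has order at most $d-1$ and commutes with $L(m)$ (because $Q\in\D(m)$ is already in the centralizer), so $D-Q\in\D(m)$ by the inductive hypothesis and $D=Q+(D-Q)\in\D(m)$.

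The hard part is the single non-formal step: deducing from $\{\sigma_d(D),p_L\}=0$ that the coefficients of $\sigma_d(D)$ are constant. This genuinely uses the special shape of $\mathcal{R}$ --- generated by the $g_\a$, which tend to $1$ at one end and to $0$ at the other end of each root direction --- so that a symbol with coefficients constant along every line has in fact constant coefficients; over a generic algebra of coefficients this fails, and indeed the $W$-invariant differential operators on $\Hreg$ form a much larger algebra than $\D(m)$, the surplus being eliminated precisely by commutation with $L(m)$ read off on symbols. The remaining ingredients --- the behaviour of symbols under the order filtration, the identity $\{f,p_L\}=2\,\partial_{x_\xi}f$ on coefficients, and the computation $\sigma_d(D_r(m))=r$ from the form of the Cherednik operators --- are routine.
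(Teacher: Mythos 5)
The paper does not actually prove this lemma: it quotes \cite[Theorem 1.3.12]{HS} (and Remark 1.3.14 for complex multiplicities), so your self-contained argument is necessarily a different route. Its architecture is sound: the inclusion $\D(m)\subseteq$ centralizer is indeed immediate from $D_{p_L}(m)=L(m)+\inner{\rho(m)}{\rho(m)}$ and the commutativity of $\D(m)$, and the reverse inclusion by induction on the order, subtracting $Q=D_r(m)$ with $r=\sigma_d(D)$, is a legitimate strategy. The steps you call routine really are routine: the order-$(d+1)$ symbol of $[D,L(m)]$ is the Poisson bracket, $\{f,p_L\}=2\,\partial_{x_\xi}f$, and $\sigma_d(D_r(m))=r$ because the reflection terms of $T_\xi(m)$ have differential order $0$, so the only order-$d$ term in $T_r(m)$ is $r(\partial)\otimes 1$.

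However, the step you yourself single out as non-formal contains a genuine gap as written. The identity $[D,L(m)]=0$ holds in $\D_\mathcal{R}$, i.e.\ on $\Hreg$ (resp.\ $\Areg$), and the coefficients $g_\a$ are singular on the walls $\a=0$. Hence $\{\sigma_d(D),p_L\}=0$ only says that $t\mapsto\sigma_d(D)(x+t\,x_\xi,\xi)$ is constant on each connected component of the line minus the walls; for a general pair $(x,x_\xi)$ both rays $t\to\pm\infty$ cross walls, so you cannot directly identify $\sigma_d(D)(x,\xi)$ with the limit value attached to the chamber of $x_\xi$. Moreover, even after that, you must exclude that the coefficients are merely locally constant, with one constant per chamber (the same point is already hidden in your base case, where $\partial_{\xi_i}\phi=0$ a priori only makes $\phi$ constant on each chamber, the regular set being disconnected). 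Both points are repairable: for $x_\xi$ in the chamber of $x$ the ray $x+t\,x_\xi$, $t\geq 0$, stays regular, so your limit argument gives $\sigma_d(D)(x,\xi)=c_{C}(\xi)$ (the polynomial obtained by sending each $g_\a$ to $0$ or $1$ according to the chamber $C$ of $x$) for all $\xi$ in an open cone, hence for all $\xi$ since both sides are polynomial in $\xi$; thus every coefficient is constant on every chamber. Finally, elements of $\mathcal{R}$ are restrictions of functions holomorphic on the connected set $\Hreg$, so constancy on one chamber forces global constancy; a chamber-wise constant but non-constant function is not in $\mathcal{R}$, which is exactly where the special shape of $\mathcal{R}$ enters. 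With these two sentences added, your induction goes through and gives a correct proof of the lemma.
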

\begin{proof}
This is \cite[Theorem 1.3.12]{HS}. See also \cite[Remark 1.3.14]{HS} for its extension to
complex-valued multiplicities.
\end{proof}

Let $\l \in \frakacs$ be fixed.
The Heckman-Opdam hypergeometric function with spectral parameter $\l$ is the
unique $W$-invariant analytic solution $F_\l(m)$ of the system of differential equations
\begin{equation}
  \label{eq:hypereq}
D_p(m) f=p(\l)f \qquad (p \in \polya^W),
\end{equation}
which satisfies $f(0)=1$.
The non-symmetric hypergeometric function with spectral parameter $\l$ is the
unique analytic solution $G_\l(m)$ of the system of differential equations
\begin{equation}
  \label{eq:hypereq}
T_\xi(m) g=\l(\xi) g \qquad (\xi \in \fa),
\end{equation}
which satisfies $g(0)=1$.
These functions are linked by the relation
\begin{equation}
\label{eq:F-G}
F_\l(m;x)=\frac{1}{|W|}\, \sum_{w\in W} G_\l(m;w^{-1}x) \qquad (x \in \fa)\,,
\end{equation}
where $|W|$ denotes the cardinality of $W$.

For geometric multiplicities, $\D(m)$ coincides with the algebra
of radial components of the $G$-invariant differential operators
on $G/K$. The fact that the radial components of $G$-invariant
differential operators on $G/K$ have coefficients in $\mathcal{R}$
was proven by Harish-Chandra. See \cite[sections 5 and 6]{HC58}.
Furthermore, $F_\l(m)$ agrees with the restriction to $A\equiv
\fa$  of Harish-Chandra's spherical function on $G/K$ with
spectral parameter $\l$. A geometric intepretation of the
functions $G_\l(m)$ has been recently given in \cite{Oda14}.

%%%%%%%%%%%%%%%%%%%%%%%%%%
\subsection{Hermitian symmetric spaces of the noncompact type}
\label{subsection:hermitian}

In this subsection, we recall a few facts on the fine structure of the Hermitian symmetric spaces and their homogeneous line bundles. For further details, we refer to \cite[Ch. VIII]{He1}, \cite{KW65}, \cite{Schl84},  \cite{ShimenoEigenfunctions} and \cite{ShimenoPlancherel}.

%%%%%%%%%%%%%%%%%%%%%%%%%%
\subsubsection{Structure theory}
Let $G/K$ denote an irreducible Hermitian symmetric space of the noncompact type, as in the table in Remark \ref{rem:hermitian}. By possibly replacing $G$ by its universal covering, we can suppose that $G$ is simply connected. Then $K$ is the direct product of a compact semisimple group $K_s$ and a one dimensional vector group $K_a$. Let $\frakg$, $\fk, \fk_s$ and $\fz$ be the Lie algebras of $G$, $K$, $K_s$ and $K_a$, respectively. Then  $\fk_s=[\fk,\fk]$, $\dim\fz=1$ and $\fk=\fk_s+\fz$. We denote by $\frakg=\fk\oplus \fp$ the Cartan decomposition and $\theta$ the corresponding Cartan involution. The complexification of $\frak g$ is denoted by $\frakg_\C$. If $\fs\subset \frakg$ is a subalgebra, we denote by $\fs_\C$ the complexification of $\fs$ and we suppose that $\fs_\C\subset \frakg_\C$. The real and complex duals of $\fs$ are indicated by $\fs^*$ and $\fs_\C^*$, respectively.

Let $\ft$ be a Cartan subalgebra of $\fk$. Then
$\fz\subset \ft$ and $\ft$ is a Cartan subalgebra of $\frak g$ as well. The set of the roots of
$(\frakg,\ft)$ is the set $\Delta$ of all $\gamma\in i\ft^*\setminus \{0\}$ so that $$\frakg_\gamma=\{X\in \frakg:\text{$[H,X]=\gamma(H)X$ for all $H\in \fa$}\}$$
contains nonzero elements.
Let
$$\Delta_c=\{\gamma\in \Delta:\frakg_\gamma\subset\fk_\C\}
\quad \text{and} \quad \Delta_n=\{\gamma\in
\Delta:\frakg_\gamma\subset\fp_\C\}$$ be respectively the sets of
compact and noncompact roots. Hence
$\Delta=\Delta_c\sqcup\Delta_n$. There is an element $Z_0\in \fz$
such that $\ad Z_0$ has eigenvalues $-i,0,i$. We fix a set
$\Delta^+$ of positive roots in $\Delta$ so that
$\Delta_n^+=\{\a\in \Delta: \a(Z_0)=i\}$, where
$\Delta_n^+=\Delta^+\cap \Delta_n$.

Let $B$ denote the Killing form of $\frakg_\C$. For $\gamma \in \ft_\C^*$ let $H_\gamma \in \ft_\C$ be determined by the condition that $\gamma(H)=B(H_\gamma,H)$ for all $H\in \ft$.
Let $\{\gamma_1,\dots,\gamma_r\}\subset \Delta_n^+$ a maximal strongly orthogonal subset, chosen so that $\gamma_j$ is the highest element of $\Delta_n$ strongly orthogonal to
$\{\gamma_{j+1},\dots,\gamma_r\}$ for all $j=1,\dots,r$.
Set
$$\ft^-=\oplus_{j=1}^r \R iH_{\gamma_j} \quad \text{and} \quad \ft^+=\{H\in \ft:\text{$\gamma_j(H)=0$ for all $j=1,\dots,r$}\}\,.$$
So $\ft=\ft^-\oplus \ft^+$.

Let $\overline{X}$ denote the conjugate of $X\in\frakg_\C$ with respect to $\frakg$.  For every $\gamma \in \Delta_n$ choose $X_\gamma\in \frakg\setminus \{0\}$ so that $\overline{X_\gamma}
=X_{-\gamma}$ and $\gamma([X_\gamma, X_{-\gamma}])=2$.
Then $\fa=\sum_{j=1}^r  \R (X_{\gamma_j}+X_{-\gamma_j})$ is a maximal abelian subspace of
$\fp$ and $\fj=\ft^+ \oplus \fa$ is a Cartan subspace of $\frakg$.

Let $c=\Ad\Big(\exp\frac{\pi}{4}\sum_{j=1}^r (X_{\gamma_j}-X_{-\gamma_j})\Big)$. Then $c$ is an automorphism of $\frakg_\C$ mapping $i\ft^-$ onto $\fa$ and fixing $\ft^+$.
Let $c_*$ denote the adjoint of $c^{-1}:\fj_\C\to \ft_\C$. Then $c_*(\Delta)$ is the root system
of $(\frakg,\fj)$. The nonzero restrictions of $c_*(\Delta^+)$ to $\fa$ form a positive system $\Sigma^+$ of the (restricted) root system $\Sigma$  of $(\frakg,\fa)$.
The root system $\Sigma$ is the one appearing in the table of Remark \ref{rem:hermitian}.

%%%%%%%%%%%%%%%%%%%%%%%%%%%%
\subsubsection{Homogeneous line bundles}
\label{subsection:hermitian-analysis}

%References for this subsection: \cite{Schl84}, \cite{ShimenoPlancherel} and %

We keep the notation above.
For $\ell \in\C$ define $\tau_\ell:K\to \C$ by
$\tau_\ell(k)=1$ for $k\in K_s$ and $\tau_\ell(\exp(tZ))=e^{it\ell}$, where $Z$ is a suitably chosen basis of $\fz$ and $t\in \R$. All one-dimensional representations of $K$ are of this form. The representation $\tau_\ell$ is unitary if and only if
$\ell\in \R$. Let $G_\C$ denote the simply connected complexification of $G$ and let
$K_\C$ denote the connected subgroup of $G_\C$ complexifying $K$. If $G_\R$ and $K_\R$ respectively denote the analytic subgroups of $G_\C$ with Lie algebras $\mathfrak g$ and $\mathfrak k$, then $G/K=G_\R/K_\R$, but
$\tau_\ell$ is a representation of $K_\R$ if and only if $\ell \in \Z$.

Let $E_\ell$ denote the homogeneous line bundle on $G/K$ associated with $\tau_\ell$. The space of smooth sections of $E_\ell$ can be identified with the space $C^\infty(G/K;\tau_\ell)$ of
$C^\infty$ functions on $G$ such that $f(gk)=\tau_\ell(k)^{-1} f(g)=\tau_{-\ell}(k) f(g)$ for all $g\in G$ and $k\in K$. We denote by $C^\infty_\ell(G)$ the subspace of $C^\infty(G/K;\tau_\ell)$ of functions $f$ on $G$ satisfying $f(k_1 g k_2)=\tau_{-\ell}(k_1)f(x)\tau_{-\ell}(k_2)=\tau_{-\ell}(k_1k_2)f(x)$ for all $k_1,k_2\in K$.
For instance, if $\ell=0$, then $\tau_0$ is the trivial representation. Hence $C^\infty(G/K;\tau_0)$ and $C^\infty_0(G)$ can be respectively identified with the space $C^\infty(G/K)$ of smooth functions on $G/K$ and the space $C^\infty(G/K)^K$ of $K$-left-invariant functions on $G/K$.

Let $A$ be the connected subgroup of $G$ of Lie algebra $\fa$. Then $\fa$ is abelian and $\exp:\fa \to A$ is a diffeomorphism. Let $\fa^+$ be the positive Weyl chamber in $\fa$ associated with $\Sigma^+$, and set $A^+=\exp(\fa^+)$.  Let $M$ and $M^*$ be the centralizer and the normalizer of $A$ in $K$, respectively. Recall that they have the same Lie algebra, indicated by $\fm$. The Weyl group $W$ of $\Sigma$ agrees with $M^*/M$ acting by conjugation on $A$.
By the decomposition $G=KAK$,  every element $g\in G$ can be written as $g=k_1ak_2$ with $A$-component $a$ unique up to Weyl group conjugates.  Recall that $k_1ak_2\in A$ for all $a\in A$ if and only if $k_1=k_2^{-1} \in M^*$. Then, on $A$, the condition that $f \in C^\infty_\ell(G)$ becomes that for every $w=yM \in W=M^*/M$ and $a \in A$ we have $f(wa)=f(y a y^{-1})=\tau_\ell(e)f(a)=f(a)$. Here $e$ denotes the identity element of $G$. Thus, the restriction $f|_A$ of $f \in C^\infty_\ell(G)$ to $A$ belongs to $C^\infty(A)^W$, the space of $W$-invariant smooth functions on $A$. By continuity and by the decomposition
$G=K\overline{A^+}K$, an element $f\in C^\infty_\ell(G)$ can be also identified by its restriction $f|_{A^+}$ to $A^+$.

%%%%%%%%%%%%%%%%%%%%%%%%%%%%%%
%%%%%%%%%%%%%%%%%%%%%%%%%%%%%%
\section{Invariant differential operators on homogeneous line bundles}
\label{section:diff-l-on-G/K}

Let $G/K$ be an Hermitian Riemannian symmetric space of the noncompact type and let
$\ell\in \Z$ be fixed. We keep the notation of the subsections \ref{subsection:hermitian} and
\ref{subsection:hermitian-analysis}.

Let $\D_\ell(G/K)$ denote the algebra of all left-invariant differential operators on $G$ mapping $C^\infty(G/K;\tau_\ell)$ into itself. For $\ell=0$, $\D_0(G/K)$ concide with the commutative algebra $\D(G/K)$ of left-invariant differential operators on $G/K$.

Let $\fUg$ denote the universal enveloping algebra of $\frakg_\C$.
If $\fs$ is a subspace of $\frakg$, we denote by $S(\fs_\C)$ the symmetric algebra over $\fs_\C$ and
set $\fUs=\lambda(S(\fs_\C))$, where $\lambda$ is the symmetrization map. Then $\fUs$ is
the subalgebra of $\fUg$ generated by $1$ and $\fs$ if $\fs$ is a subalgebra of $\frakg$.
The elements of $\fUg$ can be naturally identified with left-invariant differential operators on $G$ by
$$(X_1\cdots X_kf)(g)=\Big.\frac{\partial^k}{\partial t_1 \cdots \partial t_k}
f\big(g \exp(t_1 X_1) \cdots \exp(t_k X_k)\big)\Big|_{t_1=\dots=t_k=0}\,.$$

Let $\fUg^K$ be the space of $\Ad(K)$-invariant elements in $\fUg$ and set $\fk_\ell=\{X+\tau_\ell(X):X\in \fk\}$.  Under the the above identification, $\fUg^K$ is mapped homomorphically onto
$\D_\ell(G/K)$, with kernel $\fUg^K \cap \fUg\fk_\ell$. See e.g. \cite[Theorem 2.1]{ShimenoEigenfunctions}. The isomorphism in the following lemma goes under the name of Harish-Chandra isomorphism.

\begin{Lemma} \label{lemma:Dell-poly}
$\D_\ell(G/K)\cong \fUg^K/\fUg^K \cap
\fUg\fk_\ell$ is isomorphic to the algebra $S(\fa_\C)^W$.  In particular, $\D_\ell(G/K)$ is a commutative algebra of rank $r$.
\end{Lemma}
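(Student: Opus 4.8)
The plan is to adapt Harish-Chandra's classical argument for $\mathbb D(G/K)$ (the case $\ell=0$, see \cite[sections 5 and 6]{HC58}) to the line-bundle setting. The starting point is the identification, already recorded in the excerpt, of $\mathbb D_\ell(G/K)$ with the quotient $U(\frakg_\C)^K/\big(U(\frakg_\C)^K\cap U(\frakg_\C)\fk_\ell\big)$, where $\fk_\ell=\{X+\tau_\ell(X):X\in\fk\}$. First I would fix the Iwasawa decomposition $\frakg=\fn\oplus\fa\oplus\fk$ (with $\fn$ the sum of the positive restricted root spaces) and the corresponding decomposition $U(\frakg_\C)=U(\fa_\C)\oplus\big(\fn U(\frakg_\C)+U(\frakg_\C)\fk_\ell\big)$. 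Actually, since $\tau_\ell$ is one-dimensional, for $X\in\fk$ one has $X\equiv -\tau_\ell(X)\pmod{U(\frakg_\C)\fk_\ell}$, so modulo the relevant ideal every element of $\fk$ is a scalar; this is what makes the $\tau_\ell$-analogue of the Harish-Chandra projection $U(\frakg_\C)^K\to U(\fa_\C)$ well-defined. Composing this projection with the $\rho$-shift and restricting to $W$-invariants should yield the asserted algebra map into $S(\fa_\C)^W$.

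The key steps, in order, are: (i) show the $\tau_\ell$-twisted Harish-Chandra projection $p_\ell:U(\frakg_\C)^K\to U(\fa_\C)$ descends to the quotient $\mathbb D_\ell(G/K)$ and is an algebra homomorphism — this uses that $U(\frakg_\C)^K$ normalizes the situation and that $[\fk,\fk]=\fk_s$ lies in the kernel of $\tau_\ell$, so the twist is consistent; (ii) show the image lands in $S(\fa_\C)^W$ — the $W$-invariance comes from the fact that elements of $M^*$ normalize $\fa$ and act trivially through $\tau_\ell$ (as spelled out in subsection \ref{subsection:hermitian-analysis}, $f(wa)=f(a)$), so conjugating by representatives of $W$ leaves the projection invariant after the $\rho$-shift, exactly as in the untwisted case; (iii) injectivity: if $D\in U(\frakg_\C)^K$ has $p_\ell(D)=0$, then $D\in\fn U(\frakg_\C)+U(\frakg_\C)\fk_\ell$, and a filtration/symbol argument together with $\Ad(K)$-invariance forces $D\in U(\frakg_\C)^K\cap U(\frakg_\C)\fk_\ell$, i.e. $D=0$ in $\mathbb D_\ell(G/K)$; (iv) surjectivity: one produces enough invariant operators, e.g. by averaging, and matches them against a set of generators of $S(\fa_\C)^W$ — here one can invoke that the $\tau_{-\ell}$-spherical functions $\varphi_{\ell,\lambda}$ are joint eigenfunctions parametrized by $\lambda\in\fa_\C^*/W$ (equivalently, use Shimeno's computation of $\delta_\ell(\Omega)$ to get one generator and a degree/filtration count for the rest). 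The final sentence (commutativity, rank $r$) is then immediate from $S(\fa_\C)^W$ being a polynomial algebra on $r=\dim\fa$ generators by Chevalley's theorem.

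The main obstacle I expect is step (iii), the injectivity of the twisted Harish-Chandra projection, i.e. showing $U(\frakg_\C)^K\cap\big(\fn U(\frakg_\C)+U(\frakg_\C)\fk_\ell\big)=U(\frakg_\C)^K\cap U(\frakg_\C)\fk_\ell$. In the untwisted case ($\ell=0$) this rests on a careful analysis of $\Ad(K)$-invariant elements modulo $U(\frakg_\C)\fk$, using the Cartan decomposition and weight considerations; with the twist by $\tau_\ell$ one must track the extra scalar factors and check that the argument is unaffected by replacing $\fk$ with $\fk_\ell$. Since $\tau_\ell$ kills $\fk_s=[\fk,\fk]$ and is nontrivial only on the one-dimensional center $\fz$, the modification is mild — essentially one works with $\fk_\ell$ in place of $\fk$ throughout and observes that $\fz$ is central in $\fk$ so commutes past $\Ad(K)$-invariant elements — but it is the point where the proof requires genuine care rather than a routine citation. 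Alternatively, one can sidestep a fully self-contained argument by citing the analogous statement in \cite{ShimenoEigenfunctions} for homogeneous line bundles over Hermitian symmetric spaces, since the isomorphism $\mathbb D_\ell(G/K)\cong S(\fa_\C)^W$ is exactly Shimeno's Harish-Chandra-type isomorphism in that context.
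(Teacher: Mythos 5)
The paper's entire proof of this lemma is the citation to \cite[Theorem 2.4]{ShimenoEigenfunctions}, which is exactly the alternative you offer in your last sentence; so in its fallback form your proposal coincides with the paper's approach. Your main route --- reconstructing the twisted Harish-Chandra isomorphism by adapting the $\ell=0$ argument of \cite{HC58} --- is essentially the content of Shimeno's proof, and the outline is sound: the decomposition $\fUg=\fUa\oplus\big(\fn_\C\,\fUg+\fUg\,\fk_\ell\big)$ is the right starting point, and the observation that $X\equiv-\tau_\ell(X)$ modulo $\fUg\fk_\ell$ for $X\in\fk$ (together with $\tau_\ell$ vanishing on $\fk_s=[\fk,\fk]$) is what makes the twisted projection well defined. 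Two places where your sketch is lighter than what is actually required: the $W$-invariance of the image in your step (ii) does not follow by formally conjugating with $M^*$-representatives, since conjugation does not preserve $\fn$; as in the untwisted case it is obtained from the eigenvalues of $D$ on the $\tau_{-\ell}$-spherical functions $\varphi_{\ell,\lambda}$ combined with $\varphi_{\ell,\lambda}=\varphi_{\ell,w\lambda}$ (or by a rank-one reduction), and this is where the $\rho$-shift enters. Likewise surjectivity in step (iv) needs a graded/filtration comparison matching top symbols against $S(\fp_\C)^K\cong S(\fa_\C)^W$, not just the Casimir plus a degree count. Both points are carried out in \cite{ShimenoEigenfunctions}, so the citation (as in the paper) is the economical choice; what your direct route would buy is self-containedness and an explicit description of the twisted homomorphism.
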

\begin{proof}
This is \cite[Theorem 2.4]{ShimenoEigenfunctions}.
\end{proof}

Let $\Areg=A \cap \Hreg$ denote the open submanifold of regular elements in $A$.
The enveloping algebra $\fUa$ can be identified with the space
$S(\fa_\C)$ of polynomial differential operators in $A$ (or $\fa$). Their elements can be thought as belonging to $\D(\Areg)$, the algebra of differential operators on $\Areg$.
The \textit{$\tau_{-\ell}-$radial component} of $D\in \D_\ell(G/K)$ is the differential operator $\delta_\ell(D)$ on $\Areg$ which satisfies $(Df)|_{\Areg}=\delta_\ell(D)(f|_{\Areg})$ for all
$f \in C^\infty_\ell(G)$.
The Laplacian of the line bundle $E_\ell$ is the element of $\D_\ell(G/K)$ corresponding under
the Harish-Chandra isomorphism to the (equivalence class of) the Casimir operator $\Omega\in\fUg^K$.
Its $\tau_{-\ell}$-radial component $\delta_\ell(\Omega)$ has been computed (with different methods) in \cite[Lemma 2.4]{ShimenoPlancherel} and in \cite[Theorem 5.1.7]{HS}. It is given by
\begin{equation}
\label{eq:radcompCasimir}
\delta_\ell(\Omega)=L(m)+\frac{p^2\ell^2}{4} \sum_{j=1}^r
 \frac{1}{\cosh^2\big(\frac{\beta_j}{2}\big)}+\tau_{-\ell}(\Omega_\fm)\,.
\end{equation}
In \eqref{eq:radcompCasimir},
$m=(m_\rml,m_\rmm,m_\rms)$ denotes the multiplicity function of $G/K$,
$L(m)$ is as in \eqref{eq:Laplm}, $p$ is the norm of the roots $\beta_j$, and $\Omega_\fm$ is the Casimir operator of $\fm_\C$.
In Case I, $\fm\subset \fk_s$ (the Lie algebra of $K_s$). Hence $\tau_{-\ell}(\Omega_\fm)=0$ in this case.

If $\ell=0$, then the image of $\Omega$ in $\D(G/K)$ is the Laplace-Beltrami operator $L_{G/K}$
of the Riemannian space $G/K$. Then  $\delta_0(\Omega)=L(m)$ is the radial part of $L_{G/K}$
for the $K$-action on $G/K$ with $A^+\cdot o$ as a transversal manifold. Here $o=eK$, with $e$ the unit element of $G$, is the base point of $G/K$.

We define $\mathcal{R}_0$ as the subalgebra of $\C[\Hreg]$ generated by $1$,  the functions
\begin{equation}
\label{eq:R0}
f_\a=e^{-\a}(1-e^{-2\a})^{-1}  \qquad (\a\in \Sigma^+)
\end{equation}
and the functions $g_\a$, as in \eqref{eq:galpha}, with $\a\in \Sigma^+$.
Then $\mathcal{R}_0$ contains the algebra $\mathcal{R}$. Furthermore,
$g_\a\pm f_\a=(1\mp e^{-\alpha})^{-1}$ and $(1+e^{-\alpha})^{-1}(1- e^{-\alpha})^{-1}=g_\a$.
Hence $\mathcal{R}_0$ coincides with the algebra considered by Harish-Chandra in \cite[p. 63-64]{HC60}.
The $f_\a$ and $g_\a$ can be considered as functions on $\Areg$ by setting for $h\in \Areg$:
$$f_\a(h)=f_\a(\log h) \qquad \text{and} \qquad g_\a(h)=g_\a(\log h)\,.$$
Here $\log:A\to \fa$ is the inverse of $\exp$.

For every integer $d\geq 1$, let $\mathcal{R}_{0,d}$ be the linear span of the monomials in $f_\a$ and $g_\a$ of degree $d$. Set
$$
\mathcal{R}_0^{(d)}=\sum_{h=1}^d \mathcal{R}_{0,h}
\qquad \text{and} \qquad \mathcal{R}_0^{+}=\sum_{h=1}^\infty
\mathcal{R}_{0,h}\,.
$$
Recall that an element $b\in \fUg$
is said to be homogeneous of degree $d$, written $\deg(b)=d$, if
$b\in \l(S_d(\frakg_\C))$, where $S_d(\frakg_\C)$ denotes the space of
homogenous elements in $S(\frakg_\C)$ of degree $d$ and $\lambda$ is
the symmetrization map.

Set $\fn=\sum_{\alpha\in \Sigma^+} \frakg^\alpha$, where $\frakg^\alpha$ denotes the root space of $\a\in \Sigma$.
The direct sum $\frakg=\fk\oplus \fa\oplus \theta(\fn)$, where $\theta$ is the Cartan involution, yields the
direct sum decomposition
\begin{equation}
\label{eq:decomposition-Ug-1}
\fUg=\fUa\fUk\oplus \theta(\fn_\C)\fUg\,.
\end{equation}
Let $\kappa:\fUg\to \fUa\fUk$ denote the
corresponding projection.
Let $\mathfrak{q}$ be the orthogonal complement in $\fk$ (with respect to the Killing form)
of the centralizer of $\fa$.
For $h \in \Areg$ and an element $b \in \fUg,$
$b^{h}$ will denote the operator $\Ad(h)b.$

Harish-Chandra proved the following proposition. See \cite[Lemmas
7 and 10]{HC60}; see also \cite[Proposition 4.1.4]{GV}. For any
unexplained notation we refer to \cite{HC60}.

\begin{Prop}
\label{prop:radial1} Let $b\in \fUg$ be homogeneous of
degree $d$ and write $\kappa(b)=\sum_{i=1}^m v_i \xi''_i$ with
$v_i \in \fUa$, $\xi''_i\in \fUk$. Then there exists a
finite number of elements $\xi_j\in \fUq$, $\xi'_j\in
\fUk$,  $u_j\in \fUa$ and $f_j\in \mathcal{R}_0^+$, with
$1\leq j\leq n$, such that
\begin{enumerate}
\thmlist
\item $\deg(u_j)<d$ and $f_j \in \mathcal{R}_0^{(m-\deg(u_j))}$ for all $1\leq j\leq n$,
\item $\deg(\xi_j)+\deg(u_j)+\deg(\xi_j')\leq d$ for all $1\leq j\leq n$,
\item $b=\sum_{i=1}^m v_i \xi''_i+\sum_{j=1}^n f_j(h)\xi_j^{h^{-1}} u_j \xi'_j$ for all $h\in \Areg$.
\end{enumerate}
\end{Prop}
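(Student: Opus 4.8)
\emph{Reduction.} Since \eqref{eq:decomposition-Ug-1} comes from a Poincar\'e--Birkhoff--Witt basis of $\fUg$ subordinate to $\frakg=\fk\oplus\fa\oplus\theta(\fn)$, the projection $\kappa$ preserves the grading of $\fUg$. Hence $\kappa(b)=\sum_i v_i\xi''_i$ is homogeneous of degree $\le d$ and already supplies the first sum in (c), with the stated bounds automatic; it remains to write $b-\kappa(b)\in\theta(\fn_\C)\fUg$, again homogeneous of degree $d$, in the form of the second sum. I would induct on $d$, the case $d=0$ being trivial.

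\emph{The basic identity.} The only analytic ingredient is the following. For $\alpha\in\Sigma^+$ and $X\in\frakg^\alpha_\C$ set $Z_X:=X+\theta X$. Then $\theta Z_X=Z_X$ forces $Z_X\in\fk_\C$, and since $\frakg^{\pm\alpha}$ are $B$-orthogonal to $\fm$ (the centralizer of $\fa$ in $\fk$) we get $Z_X\in\fq_\C$. As $A$ acts on $\frakg^{\pm\alpha}_\C$ through the characters $e^{\pm\alpha}$, one has $\Ad(h^{-1})Z_X=e^{-\alpha(\log h)}X+e^{\alpha(\log h)}\theta X$; for $h\in\Areg$ (so $e^{\alpha(\log h)}-e^{-\alpha(\log h)}\ne0$) solving for $\theta X$ gives
\begin{equation*}
\theta X = f_\alpha(h)\,\Ad(h^{-1})(Z_X) + \bigl(1-g_\alpha(h)\bigr)\,Z_X ,
\end{equation*}
with $f_\alpha,g_\alpha$ as in \eqref{eq:R0}, \eqref{eq:galpha} and $1-g_\alpha=g_{-\alpha}$ by \eqref{eq:fminusa}. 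Thus one $\theta\fn$-letter is traded for the $\Ad(h^{-1})$-conjugate of a degree-one element of $\fq_\C$ with $\mathcal{R}_0$-coefficient, up to a plain $\fk_\C$-term.

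\emph{Inductive step.} Write $b-\kappa(b)$ in PBW form as $\sum_\mu(\theta X_\mu)b_\mu$ with $\theta X_\mu\in\theta\fn_\C$ a root vector and $b_\mu\in\fUg$ homogeneous of degree $d-1$, and apply the basic identity to each $\theta X_\mu$. In the term $f_{\alpha_\mu}(h)\Ad(h^{-1})(Z_\mu)b_\mu$ one first rewrites $b_\mu$ by the induction hypothesis; whenever this produces a further conjugate $\eta^{h^{-1}}$ with $\eta\in\fUq$, one absorbs it via $\Ad(h^{-1})(Z_\mu)\Ad(h^{-1})(\eta)=\Ad(h^{-1})(Z_\mu\eta)$, $Z_\mu\eta\in\fUq$; whenever it produces an unconjugated factor from $\fUa\fUk$, one expands $\Ad(h^{-1})(Z_\mu)=e^{-\alpha_\mu(\log h)}X_\mu+e^{\alpha_\mu(\log h)}\theta X_\mu$ and commutes the root vectors toward the right, the resulting commutators with $\fUa$-elements having strictly smaller degree and being covered by the induction. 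The crucial point is that $f_{\alpha_\mu}(h)e^{\pm\alpha_\mu(\log h)}\in\{g_{\alpha_\mu}(h),\,g_{\alpha_\mu}(h)-1\}$, so all coefficients stay inside $\mathcal{R}_0$ — this is precisely why the smaller algebra $\mathcal{R}_0$, generated by the $f_\alpha$ and $g_\alpha=e^\alpha f_\alpha$, is the natural receptacle here. The remaining term $(1-g_{\alpha_\mu}(h))Z_\mu b_\mu$ is handled by commuting $Z_\mu\in\fk_\C$ past $b_\mu$ (again modulo lower-degree commutators). The $\fUa\fUk$-content produced, summed over all $\mu$ together with all the $\fUa\fUk$-debris generated above, must reconstruct $\kappa(b)$ exactly, because $b-\kappa(b)$ has no $\fUa\fUk$-component; what is left is a finite sum of admissible terms. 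Tracking degrees, one checks that a conjugated $\fUq$-factor always comes with a function whose $\mathcal{R}_0$-degree equals the number of $\theta\fn$-letters already consumed, that $\deg u_j<d$, and that the total operator degree never exceeds $d$; these are (a) and (b).

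\emph{Main obstacle.} The analytic content is elementary (the basic identity); essentially all the work lies in the inductive bookkeeping — organizing the iterated PBW re-expansions so that the coefficients never leave $\mathcal{R}_0$, the degree inequalities (a)--(b) are preserved at every step, and the constant-coefficient $\fUa\fUk$-terms that appear along the way add up exactly to $\kappa(b)$. This is the combinatorial heart of \cite[Lemmas 7 and 10]{HC60} (see also \cite[Prop.~4.1.4]{GV}).
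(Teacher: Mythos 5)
The paper does not actually prove this proposition: it is quoted from Harish-Chandra, with the proof deferred to \cite[Lemmas 6, 7 and 10]{HC60} and \cite[Proposition 4.1.4]{GV}. Your sketch follows exactly that classical route, and your ``basic identity'' is correct: with the normalization $Z_{\alpha,k}=\tfrac12(X+\theta X)$ (whence the factors $2$) it is precisely Harish-Chandra's Lemma 6, the formula the paper itself reproduces in the proof of Theorem \ref{thm:coefficients-taul-radial}, and the induction on the degree via \eqref{eq:decomposition-Ug-1} is the intended mechanism. So in method you are aligned with the source the paper relies on rather than offering a genuinely different argument.

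That said, three points in your inductive step need repair, and the rest is deferred. First, $Z_\mu\eta\in\fUq$ is not true as stated: $\fq$ is not a subalgebra of $\fk$, and $\fUq=\lambda(S(\fq_\C))$ is only the image of the symmetrization map, hence not closed under multiplication; one must replace $Z_\mu\eta$ by a symmetrized element of $\fUq$ plus terms of strictly lower degree and feed those back into the induction, which is exactly part of the bookkeeping of \cite[Lemma 10]{HC60}, \cite[Proposition 4.1.4]{GV}. Second, the move where you re-expand $\Ad(h^{-1})(Z_\mu)$ into the root vectors $X_\mu,\theta X_\mu$ whenever an unconjugated $\fUa\fUk$-factor appears is both unnecessary and harmful: a term $f_{\alpha_\mu}(h)\,Z_\mu^{h^{-1}}v\,\xi''$ with $v\in\fUa$, $\xi''\in\fUk$ is already of the required shape $f_j(h)\,\xi_j^{h^{-1}}u_j\xi_j'$ (take $\xi_j=Z_\mu$), whereas your expansion reintroduces letters from $\fn_\C$ and $\theta(\fn_\C)$, which are of neither allowed type, so the procedure as described does not visibly terminate. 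Third, the claim that the accumulated $\fUa\fUk$-terms ``must reconstruct $\kappa(b)$ exactly'' is not needed and not how the identity closes: $h$-dependent terms such as $(1-g_{\alpha_\mu})(h)\,\kappa(b_\mu)\,Z_\mu$ are simply admissible summands of the second sum with $\xi_j=1$ (note $1-g_{\alpha_\mu}=g_{-\alpha_\mu}$ by \eqref{eq:fminusa}); $\kappa(b)$ enters only as the image of the PBW projection. Finally, the degree and coefficient bounds in (a)--(b) are asserted rather than verified, and you explicitly defer this combinatorial core to \cite[Lemmas 7 and 10]{HC60} --- which puts your proposal on the same footing as the paper (a pointer to Harish-Chandra) rather than an independent proof.
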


If $f \in C_\ell^\infty(G)$, $Z \in \fk_\C$ and $h\in \Areg$ is fixed, then
\begin{equation}
\label{eq:fk-on-Cell}
(Zf)(h)=\tau_{-\ell}(Z)f(h)\quad \text{and} \quad (Z^{h^{-1}}f)(h)=\tau_{-\ell}(Z)f(h)\,.
\end{equation}
Hence
\begin{eqnarray*}
(b f)(h)&=&\Big[\Big(\sum_{i=1}^m v_i \xi''_i+\sum_{j=1}^n f_j(h)\xi_j^{h^{-1}} u_j \xi'_j\Big)f\Big](h)\\
&=&\sum_{i=1}^m \tau_{-\ell}(\xi''_i)(v_i f)(h)+\sum_{j=1}^n f_j(h)\tau_{-\ell}(\xi_j \xi'_j)
(u_jf)(h)\,.
\end{eqnarray*}
The $\tau_{-\ell}$-radial component of $b$ is therefore
$$
\delta_\ell(b)=\sum_{i=1}^m \tau_{-\ell}(\xi''_i)v_i+\sum_{j=1}^n f_j\,\tau_{-\ell}(\xi_j \xi'_j)
u_j\,.
$$
This shows that the coefficients of $\delta_{\ell}(b)$ are in
$\mathcal{R}_0$. In fact, it turns out that they are in a proper
subalgebra of $\mathcal{R}_0$. For the case $\ell=0$, it is
classical (and proven in \cite[Lemma 24]{HC58} for arbitrary
semisimple $G$) that they are in $\mathcal{R}$. In fact,
$\mathcal{R}$ suffices for the algebras
$C_\ell^\infty(G)$ for all $\ell\in \Z$, under the condition,
given in section \ref{section:notation}, that a root system of
type $C_r$ is considered as a root system of type $BC_r$ with
$m_\rms=0$. This result was stated without proof
in \cite[Theorem 5.1.4]{HS}.
The explicit expression given in \eqref{eq:radcompCasimir} proves
it for the Laplace operator. Indeed, in the $BC_r$ case, the
algebra $\mathcal{R}$ is generated by $1$, the functions
$g_{\frac{\beta_j}{2}}=(1-e^{-\beta_j})^{-1}$ and
$g_{\beta_j}=(1-e^{-2\beta_j})^{-1}$ with $1\leq j \leq r$, and
the functions $g_{\frac{\beta_i \pm \beta_j}{2}}=(1-e^{-(\beta_i
\pm\beta_j)})^{-1}$ with $1\leq j < i \leq  r$. By
\eqref{eq:coth-alpha}, $\mathcal{R}$ is also generated by $1$, the
functions  $\coth(\frac{\beta_j}{2})$ and $\coth(\beta_j)$ with
$1\leq j \leq r$, and the functions $\coth(\frac{\beta_i
\pm\beta_j}{2})$ with $1\leq j < i \leq  r$. Because of the
identity
$$
\big[\cosh(\tfrac{\beta}{2})\big]^{-2}=4\coth^2\beta- \coth^2(\tfrac{\beta}{2})+1\,,
$$
the operator $\delta_\ell(\Omega)$ has coefficients in $\mathcal{R}$.

For general differential operators in $\D_\ell(G/K)$, proving that
their $\tau_{-\ell}$-radial component has coefficients in
$\mathcal{R}$ requires more work. We prove it below, using the
specific properties of the representation $\tau_\ell,$ and
Harish-Chandra's inductive procedure used to prove Proposition
\ref{prop:radial1}.

For $\alpha\in \Sigma$  choose a basis $\{X_{\alpha,1},\dots, X_{\alpha,m_\alpha}\}$
of the root space $\frakg^\alpha$ so that $X_{\alpha,k}=-\theta X_{\alpha,k}$ and
$B(X_{\alpha,k}, X_{\alpha,-h})=\delta_{kh}$ for all $k,h=1,\dots, m_\alpha$.
Write
$$X_{\alpha,k}=Z_{\alpha,k}+Y_{\alpha,k}  \qquad (k=1,\dots,m_\alpha)$$
with $Z_{\alpha,k}\in \fk$ and $Y_{\alpha,k}\in \fp$.

\begin{Lemma}
\label{lemma:vanishing-tauell}
Keep the above notation.
Then $\tau_{-\ell}(Z_{\alpha,k})=0$ if $\alpha\in \Sigma_\rms\cup \Sigma_\rmm$.
\end{Lemma}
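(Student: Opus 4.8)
The plan is to reduce the assertion to an orthogonality statement for the Killing form $B$ of $\frakg_\C$ and then to exploit the fine structure of the restricted root system of the Hermitian symmetric space. If $\ell=0$ the representation $\tau_0$ is trivial and there is nothing to prove, so assume $\ell\neq 0$. First I would record that, since $\fk$ is reductive with one-dimensional centre $\fz$ and semisimple part $\fk_s=[\fk,\fk]$ with $B(\fz,\fk_s)=0$, the differential of $\tau_{-\ell}$ has the form $X\mapsto c_\ell\,B(X,Z_0)$ on $\fk$ for some $c_\ell\neq 0$: it kills $\fk_s$ and is a nonzero multiple of $B(\cdot,Z_0)|_\fz$. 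Writing $X_{\a,k}=Z_{\a,k}+Y_{\a,k}$ as in the statement, the part $Y_{\a,k}$ lies in $\fp$, which is $B$-orthogonal to $\fk\ni Z_0$; hence $\tau_{-\ell}(Z_{\a,k})=c_\ell\,B(Z_{\a,k},Z_0)=c_\ell\,B(X_{\a,k},Z_0)$. Using that the restricted root spaces are pairwise $B$-orthogonal except for the pairing of $\frakg^\nu$ with $\frakg^{-\nu}$, and that $X_{\a,k}\in\frakg^\a$, this reduces the lemma to showing that the $\frakg^{-\a}$-component of $Z_0$ vanishes for every $\a\in\Sigma_\rms\cup\Sigma_\rmm$; equivalently, since $Z_0$ is $\theta$-fixed and $\theta$ interchanges $\frakg^\a$ and $\frakg^{-\a}$, that the restricted-root expansion of $Z_0$ is supported on $\{0\}\cup\Sigma_\rml$.

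To prove this, I would expand $Z_0=Z_0^{(0)}+\sum_{\nu\in\Sigma^+}Z_0^{[\nu]}$ with $Z_0^{(0)}\in\frakg^0\cap\fk=\fm$ and $Z_0^{[\nu]}\in\fk\cap(\frakg^\nu\oplus\frakg^{-\nu})$. For $H\in\fa$ one has $[Z_0^{(0)},H]=0$ and $[Z_0^{[\nu]},H]\in\fp\cap(\frakg^\nu\oplus\frakg^{-\nu})=:\fp_\nu$, so the $\fp_\nu$-component of $[Z_0,H]$ is exactly $[Z_0^{[\nu]},H]$. Now $\ad Z_0$ restricts on $\fp$ to the invariant complex structure $J$ of $G/K$, and the key structural input is that $J\fa\subseteq\bigoplus_{j=1}^{r}\fp_{\beta_j}$ — that is, $\fa\oplus J\fa$ is the tangent space at the base point to the maximal polydisk of $G/K$. (Concretely, $\fa=\sum_j\R(X_{\gamma_j}+X_{-\gamma_j})$ together with $\gamma_j(Z_0)=i$ gives $J(X_{\gamma_j}+X_{-\gamma_j})=i(X_{\gamma_j}-X_{-\gamma_j})$, which lies in the restricted root space attached to the long root $\beta_j$; see \cite[Ch.~VIII]{He1} and \cite{KW65}.) Hence for $\nu\in\Sigma^+\setminus\Sigma_\rml$ the $\fp_\nu$-component of $JH$ vanishes for every $H\in\fa$, i.e.\ $[Z_0^{[\nu]},H]=0$ for all $H\in\fa$; this forces $Z_0^{[\nu]}\in\fm\subseteq\frakg^0$, and since $Z_0^{[\nu]}$ also lies in $\frakg^\nu\oplus\frakg^{-\nu}$ with $\nu\neq 0$ we conclude $Z_0^{[\nu]}=0$. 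Thus $Z_0$ is supported on $\{0\}\cup\Sigma_\rml$, which is precisely what the first step required, and the lemma follows.

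The only non-formal ingredient, and the step I expect to be the main obstacle, is the inclusion $J\fa\subseteq\bigoplus_{j}\fp_{\beta_j}$: it is exactly here that the long roots $\beta_j$ are singled out from the short and medium ones, and it rests on the Cayley-transform description of the restricted root system of a Hermitian symmetric space — the maximal strongly orthogonal set $\{\gamma_j\}\subset\Delta_n^+$ restricting to $\{\beta_j\}=\Sigma_\rml$. Everything else, namely the Killing-form reduction in the first paragraph and the root-space bookkeeping, is routine.
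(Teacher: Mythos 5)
Your argument is correct. Note first that the paper does not actually prove this lemma: it simply refers to Shimeno's proofs (\cite[Lemma 2.4]{ShimenoPlancherel}, \cite[Lemma 7.1 and Remark 7.2]{ShimenoEigenfunctions}) and to \cite[\S 5]{Schl84}. So what you have produced is a self-contained substitute for those citations, and it rests on the same structural fact that drives the cited arguments, namely that the centre $\fz=\R Z_0$ of $\fk$ ``sees'' only the long restricted roots. Your reduction is clean and each step checks out: $d\tau_{-\ell}|_{\fk}=c_\ell B(\cdot,Z_0)$ because $B(\fk_s,\fz)=0$; $B(\fp,\fk)=0$ turns $\tau_{-\ell}(Z_{\a,k})$ into $c_\ell B(X_{\a,k},Z_0)$; the pairing $B(\frakg^\a,\frakg^\mu)=0$ for $\mu\neq-\a$ reduces everything to the support of the restricted-root expansion of $Z_0$; and the bracket bookkeeping $[Z_0^{[\nu]},H]=$ ($\fp\cap(\frakg^\nu\oplus\frakg^{-\nu})$)-component of $JH$ correctly converts the inclusion $J\fa\subseteq\bigoplus_j\fp\cap(\frakg^{\beta_j}\oplus\frakg^{-\beta_j})$ into the vanishing of $Z_0^{[\nu]}$ for $\nu$ short or medium. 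One point you should spell out when invoking the key inclusion: writing $H_j=X_{\gamma_j}+X_{-\gamma_j}$, strong orthogonality gives $[H_k,\,i(X_{\gamma_j}-X_{-\gamma_j})]=0$ for $k\neq j$, but this alone only places $JH_j$ in the span of the root spaces for roots proportional to $\beta_j$, i.e.\ it does not yet exclude a component along $\frakg^{\pm\beta_j/2}$; that component is ruled out by the $\mathfrak{sl}_2$-computation $(\ad H_j)^2\big(i(X_{\gamma_j}-X_{-\gamma_j})\big)=4\,i(X_{\gamma_j}-X_{-\gamma_j})$ together with $\beta_j(H_j)=\pm2$, $\tfrac{\beta_j}{2}(H_j)=\pm1$ (this is part of the Cayley-transform facts in \cite[Ch.~VIII]{He1}, \cite{KW65} that you cite, but it deserves a sentence). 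With that detail made explicit, your proof is complete and is a reasonable alternative to the references given in the paper.
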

\begin{proof}
See the proof of \cite[Lemma 2.4]{ShimenoPlancherel} and \cite[\S
5]{Schl84}. For $\alpha\in \Sigma_\rmm$ see also \cite[Lemma
7.1]{ShimenoEigenfunctions} and \cite[Remark
7.2]{ShimenoEigenfunctions}.
\end{proof}

\begin{Thm} \label{thm:coefficients-taul-radial}
For each $D\in \D_\ell(G/K)$ there exists a unique operator $\delta_\ell(D) \in \D_\mathcal{R}^W=(\mathcal{R}\otimes S(\fa_\C))^W$ such that
$$Df|_A=\delta_\ell(D) f|_A$$
for all $f\in C^\infty_\ell(G)$. The map $\delta_\ell: \D_\ell(G/K)\to \D_\mathcal{R}^W$ is an injective algebra homomorphism.
\end{Thm}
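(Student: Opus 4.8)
The plan is to combine the explicit Harish-Chandra reduction of Proposition~\ref{prop:radial1} with the vanishing statement of Lemma~\ref{lemma:vanishing-tauell}, tracking carefully which functions $f_j$ can actually appear in the $\tau_{-\ell}$-radial component. Recall that we already know from the computation following Proposition~\ref{prop:radial1} that, for $b\in\fUg^K$ homogeneous of degree $d$, the operator $\delta_\ell(b)$ has coefficients in $\mathcal{R}_0$; the whole point is to improve this to $\mathcal{R}$. First I would recall that $\mathcal{R}_0$ is generated by $1$ together with $f_\a=e^{-\a}(1-e^{-2\a})^{-1}$ and $g_\a=(1-e^{-2\a})^{-1}$ for $\a\in\Sigma^+$, while $\mathcal{R}$ is the subalgebra generated by $1$ and the $g_\a$ alone; since $g_\a\pm f_\a=(1\mp e^{-\a})^{-1}$, an element of $\mathcal{R}_0$ lies in $\mathcal{R}$ precisely when, expanded in terms of $e^{-\a}$, it contains no odd powers of the ``half'' exponentials — more precisely, one can check that $\mathcal{R}$ is the subalgebra of $\mathcal{R}_0$ invariant under the substitution $e^{-\beta_j/2}\mapsto -e^{-\beta_j/2}$ (equivalently $e^{-\a}\mapsto e^{-\a}$ for $\a\in\Sigma_\rml$ and $e^{-\a}\mapsto -e^{-\a}$ for $\a\in\Sigma_\rms\cup\Sigma_\rmm$, which is consistent because the relevant half-sums appear with the right parity). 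So the strategy is to show that $\delta_\ell(b)$ is invariant under this sign change.

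Next I would examine where the functions $f_\a\in\mathcal{R}_0^+$ enter Harish-Chandra's inductive construction. In the proof of Proposition~\ref{prop:radial1} (see \cite[Lemmas 7 and 10]{HC60} or \cite[Proposition 4.1.4]{GV}), the functions $f_j$ are produced from the expansion of $\Ad(h)$ applied to the root vectors $X_{\a,k}\in\frakg^\a$: writing $X_{\a,k}=Z_{\a,k}+Y_{\a,k}$ with $Z_{\a,k}\in\fk$, $Y_{\a,k}\in\fp$, one gets identities of the shape $X_{\a,k}^{h^{-1}}=(\text{rational function of }e^{-\a})\cdot Z_{\a,k}+(\dots)$, and it is exactly the coefficient carrying an odd power of $e^{-\a/2}$ that multiplies the ``compact part'' $Z_{\a,k}$ when $\a$ has its half in $\Sigma^+$. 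When we pass to the $\tau_{-\ell}$-radial component, every occurrence of such a $Z_{\a,k}$ gets hit by $\tau_{-\ell}$, and by Lemma~\ref{lemma:vanishing-tauell} we have $\tau_{-\ell}(Z_{\a,k})=0$ for all $\a\in\Sigma_\rms\cup\Sigma_\rmm$. Thus precisely the terms that would violate the parity condition are killed. I would make this rigorous by carrying Harish-Chandra's induction through while keeping the $\fk$-components symbolic, applying $\tau_{-\ell}$ only at the end, and invoking the Lemma at each step where a root space from $\Sigma_\rms\cup\Sigma_\rmm$ contributes; the roots in $\Sigma_\rml$ contribute only even powers of their exponentials and cause no trouble. (The Casimir case, already checked via \eqref{eq:radcompCasimir}, serves as a consistency check: there $[\cosh(\beta/2)]^{-2}=4\coth^2\beta-\coth^2(\beta/2)+1$ is manifestly in $\mathcal{R}$.) This proves the existence and $\mathcal{R}$-membership; uniqueness is immediate since $A^+\equiv\fa^+\subset\Hreg$ and a differential operator on $\Areg$ is determined by its action on (restrictions of) smooth functions on an open set, for instance by taking $f\in C^\infty_\ell(G)$ whose restriction to $A$ realizes an arbitrary jet at a regular point.

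For the last sentence of the theorem, that $\delta_\ell$ is an injective algebra homomorphism: the homomorphism property is formal, since $(D_1D_2)f|_A=D_1(D_2f)|_A=\delta_\ell(D_1)\big((D_2f)|_A\big)=\delta_\ell(D_1)\delta_\ell(D_2)f|_A$ for all $f\in C^\infty_\ell(G)$, and the $W$-invariance of $\delta_\ell(D)$ follows because $D$ preserves $C^\infty_\ell(G)$ whose elements restrict to $W$-invariant functions on $A$ (as recalled in subsection~\ref{subsection:hermitian-analysis}), so $\delta_\ell(D)$ commutes with $W$ on $\C[A_\C]^W$ and hence, being a differential operator with $\mathcal{R}$-coefficients, lies in $\D_\mathcal{R}^W$. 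Injectivity: if $\delta_\ell(D)=0$ then $Df|_A=0$ for every $f\in C^\infty_\ell(G)$; since every element of $C^\infty_\ell(G)$ is determined by its restriction to $\overline{A^+}$ and $Df\in C^\infty_\ell(G)$, this forces $Df=0$ for all such $f$, i.e.\ $D=0$ as an operator on $C^\infty(G/K;\tau_\ell)$; but $\D_\ell(G/K)$ acts faithfully there by definition. I expect the main obstacle to be the bookkeeping in step two: one must verify that, in Harish-Chandra's recursion, the only way an $f_\a$ with $\a\in\Sigma^+_\rms$ (the source of odd half-exponential powers) can survive into the final coefficient is accompanied by a factor $\tau_{-\ell}(Z_{\a,k})$ or $\tau_{-\ell}(Z_{\a,k}Z_{\a',k'}\cdots)$ containing at least one compact component from $\Sigma_\rms\cup\Sigma_\rmm$ — in other words, that parity of the exponential and parity of the number of ``bad'' compact factors are correlated throughout the induction. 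Establishing this correlation cleanly, rather than by a brute-force term count, is the technical heart of the proof.
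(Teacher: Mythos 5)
Your skeleton is the right one (Harish-Chandra's construction from Proposition~\ref{prop:radial1} plus the vanishing Lemma~\ref{lemma:vanishing-tauell}), and the peripheral parts of your argument --- uniqueness, the homomorphism property, $W$-invariance, injectivity --- are essentially fine. The gap is in the central step, and it is twofold. First, your criterion for membership in $\mathcal{R}$ is false. Any character $\epsilon$ of the weight lattice with $\epsilon(\beta_j/2)=-1$ for all $j$ automatically satisfies $\epsilon\big((\beta_i\pm\beta_j)/2\big)=+1$, so your ``equivalent'' involution (flipping the medium exponentials while fixing the long ones) is not even well defined; and for the involution $e^{-\beta_j/2}\mapsto -e^{-\beta_j/2}$ that does exist, the fixed subalgebra of $\mathcal{R}_0$ is strictly larger than $\mathcal{R}$: it contains $f_\a=e^{-\a}(1-e^{-2\a})^{-1}=\tfrac{1}{2\sinh\a}$ for every medium root $\a$, and products such as $f_{\beta_i/2}f_{\beta_j/2}$, none of which lie in $\mathcal{R}$ (their exponentials involve weights like $(\beta_i\pm\beta_j)/2$ outside the lattice $\Z\beta_1+\dots+\Z\beta_r$ generated by $\{2\a:\a\in\Sigma\}$, whereas every element of $\mathcal{R}$ only involves that lattice). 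So even a complete proof that $\delta_\ell(b)$ is invariant under your sign change would not put its coefficients in $\mathcal{R}$; and singling out $\Sigma^+_\rms$ as the only source of trouble is wrong, since the medium-root $f_\a$'s must be eliminated too. Second, the ``parity correlation through the induction'' that you identify as the technical heart is exactly the part you leave unproved.

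The paper's proof shows that no such global bookkeeping is needed, because the elimination happens locally at each inductive step. By \cite[Lemma 6]{HC60}, $\theta(X_{\a,k})=2f_\a(h)Z_{\a,k}^{h^{-1}}+2(1-g_\a)(h)Z_{\a,k}$, whence \eqref{eq:thetaXbetabbeta}: in $\theta(X_{\a,k})b_{\a,k}$ the function $f_\a$ occurs \emph{only} as the coefficient of the term carrying $Z_{\a,k}^{h^{-1}}$ on the far left, and the remaining $Z_{\a,k}$ can be moved to the far right at the cost of a commutator of lower degree. Evaluating on $f\in C^\infty_\ell(G)$ at $h$ and using \eqref{eq:fk-on-Cell}, the first two terms each produce a factor $\tau_{-\ell}(Z_{\a,k})$, which vanishes for $\a\in\Sigma_\rms\cup\Sigma_\rmm$ by Lemma~\ref{lemma:vanishing-tauell}; so for short and medium roots the $f_\a$'s never enter the recursion at all, and only the commutator term, with coefficient $1-g_\a\in\mathcal{R}$, propagates. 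For the long roots nothing is killed, but there the surviving coefficient $f_{\beta_j}=g_{\beta_j/2}-g_{\beta_j}$ already lies in $\mathcal{R}$ --- this identity, which your parity picture obscures, is precisely why the long roots are harmless. With this local argument the induction closes, the coefficients of $\delta_\ell(b)$ land in the algebra generated by $1$, the $g_\a$ and the $f_{\beta_j}$, i.e.\ in $\mathcal{R}$, and the rest of your proposal then goes through.
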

\begin{proof}
(cf. \cite[Theorem 5.1.4]{HS} and \cite[\S 4]{HC60}). \;
The only point that one cannot find in the references
is that $\delta_\ell$ maps $\D_\ell(G/K)$ into
$\D_\mathcal{R}$, i.e. that the $\tau_{\ell}$-radial components have coefficients in $\mathcal{R}$ rather than in $\mathcal{R}_0$.

We follow the inductive procedure used to prove Proposition
\ref{prop:radial1}, written in  terms of restricted roots spaces.
The functions $f_j$ of Proposition \ref{prop:radial1} are
constructed inductively from \eqref{eq:decomposition-Ug-1}: since
$b-\kappa(b)\in \theta(\fn_\C)\fUg$, there are elements
$b_{\alpha,k}\in \fUg$ with $\deg(b_{\alpha,k})<d$ so that
$$b-\kappa(b)=\sum_{\alpha\in \Sigma^+}\sum_{k=1}^{m_\alpha}
\theta(X_{\alpha,k})b_{\alpha,k}\,.$$ As before, write
$X_{\alpha,k}=Z_{\alpha,k}+Y_{\alpha,k}$ with $Z_{\alpha,k}\in
\fk$ and $Y_{\alpha,k}\in \fp$.
Then, according to \cite[Lemma
6]{HC60}, for all $h=\exp(H)\in \Areg$ we have
\begin{equation*}
\theta(X_{\alpha,k})=2\frac{e^{\beta(H)}}{e^{2\beta(H)}-1}Z_{\alpha,k}^{h^{-1}}-2\frac{1}{e^{2\beta(H)}-1}Z_{\alpha,k}
=2f_\alpha(h)Z_{\alpha,k}^{h^{-1}}+2(1-g_\alpha)(h)Z_{\alpha,k}\,.
\end{equation*}
Hence
\begin{equation}
\label{eq:thetaXbetabbeta}
\theta(X_{\alpha,k})b_{\alpha,k}
=2f_\alpha(h)Z_{\alpha,k}^{h^{-1}}b_{\alpha,k}+2(1-g_\alpha)(h)b_{\alpha,k} Z_{\alpha,k}+2(1-g_\alpha)(h)[Z_{\alpha,k}, b_{\alpha,k}]\,,
\end{equation}
where $\deg(b_{\alpha,k})<d$ and $\deg([Z_{\alpha,k}, b_{\alpha,k}])<d$.
If $\tau_{\ell}(Z_{\alpha,k})=0$, then by \eqref{eq:fk-on-Cell}  the first two operators on the left-hand side of \eqref{eq:thetaXbetabbeta} do not contribute to the $\tau_{\ell}$-radial component of $b$.
By this and Lemma \ref{lemma:vanishing-tauell}, we deduce that the coefficients of the
$\tau_{\ell}$-radial component of $b$ are in the algebra generated by $1$, $g_\alpha$ (with $\alpha\in \Sigma^+$) and $f_{\beta_j}$ (with $j=1,\dots,r$). Since $f_{\beta_j}=g_{\beta_j/2}-g_{\beta_j}$, this algebra is $\mathcal{R}$.
\end{proof}

Given the multiplicity function $m=(m_\rms,m_\rmm,m_\rml=1)$ and $\ell\in \Z$, define the deformed multiplicity function $m(\ell)$ on $\Sigma$ by
\begin{equation}
\label{eq:mult-l}
m_\a(\ell)=\begin{cases}
m_\rms+2\ell  &\text{if $\a\in \Sigma_\rms$}\\
m_\rmm &\text{if $\a\in \Sigma_\rmm$}\\
1-2\ell  &\text{if $\a\in \Sigma_\rml$}\,.
\end{cases}
\end{equation}
Then
\begin{equation}
\label{eq:rhol}
 \rho(m(\ell))=\frac{1}{2} \sum_{\a \in \Sigma^+} m_\a(\ell) \a=
 \rho(m)-\frac{\ell}{2}\, \sum_{j=1}^r \beta_j\,.
\end{equation}
For $a=\exp(x)\in A$ define:
\begin{equation}
\label{eq:u}
u(a)=u(x)=\prod_{j=1}^r  \cosh\big(\frac{\beta_j(x)}{2}\big)\,.
\end{equation}
In \cite[Proposition 2.6]{ShimenoPlancherel}, Shimeno proved that
\begin{equation}
\label{eq:radial-cong-h1}
u^\ell\circ \Big(\delta_\ell(\Omega)+\inner{\rho(m)}{\rho(m)}-\tau_{-\ell}(\Omega_\fm)\Big)
\circ u^{-\ell}=L(m(\ell))+\inner{\rho(m(\ell))}{\rho(m(\ell))}\,.
\end{equation}

\begin{Lemma}
\label{lemma:conj-by-ul}
Conjugation by $u^{-\ell}$ is an algebra isomorphism of $\D_\mathcal{R}$ mapping  $\D_\mathcal{R}^W$ onto itself. In particular, for every multiplicity function $m$,
$u^{-\ell}\circ \D(m)\circ u^{\ell}$ is a commutative subalgebra of $\D_\mathcal{R}^W$
of rank $r$. It is the centralizer of $u^{-\ell} \circ\delta_\ell(L(m))\circ u^{\ell}$ in $\D_\mathcal{R}^W$.
\end{Lemma}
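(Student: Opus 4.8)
The plan is to prove the first, computational, assertion directly and then obtain the rest as formal consequences of it together with Lemma~\ref{lemma:commutatorL}. Since $u=\prod_{j=1}^{r}\cosh(\beta_j/2)$ of \eqref{eq:u} is strictly positive on $A$, the powers $u^{\pm\ell}$ are well-defined smooth nowhere-vanishing functions there, and conjugation $D\mapsto u^{-\ell}\circ D\circ u^{\ell}$ is an algebra automorphism of the algebra of differential operators on $\Areg$, with inverse $D\mapsto u^{\ell}\circ D\circ u^{-\ell}$, because $(u^{-\ell}\circ D_1\circ u^{\ell})(u^{-\ell}\circ D_2\circ u^{\ell})=u^{-\ell}\circ D_1D_2\circ u^{\ell}$. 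The one substantial point is that this automorphism maps $\D_\mathcal{R}=\mathcal{R}\otimes\polya$ into itself; a priori conjugation by $u^{\pm\ell}$ only keeps one inside the larger algebra of operators with coefficients in $\mathcal{R}_0$ of \eqref{eq:R0}.

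To prove stability of $\D_\mathcal{R}$, note that multiplication operators by elements of $\mathcal{R}$ are unaffected by the conjugation, and $\D_\mathcal{R}$ is generated as an algebra by these together with the operators $\partial_\xi$ ($\xi\in\fa$); hence it is enough to check $u^{-\ell}\circ\partial_\xi\circ u^{\ell}\in\D_\mathcal{R}$. Computing, $u^{-\ell}\circ\partial_\xi\circ u^{\ell}=\partial_\xi+\ell\,(\partial_\xi\log u)$ with $\partial_\xi\log u=\tfrac12\sum_{j=1}^{r}\beta_j(\xi)\,\tanh(\beta_j/2)$, so everything comes down to the fact that $\tanh(\beta_j/2)\in\mathcal{R}$. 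This I would get from the identity $\tanh(\beta_j/2)=\coth(\beta_j/2)\bigl(1-[\cosh(\beta_j/2)]^{-2}\bigr)$, together with $\coth(\beta_j/2)=2g_{\beta_j/2}-1\in\mathcal{R}$ (by \eqref{eq:coth-alpha}) and the fact, recorded just before Theorem~\ref{thm:coefficients-taul-radial}, that $[\cosh(\beta_j/2)]^{-2}$ is a polynomial in $\coth\beta_j$ and $\coth(\beta_j/2)$ and so lies in $\mathcal{R}$. Running the same computation with $-\ell$ in place of $\ell$ shows the inverse preserves $\D_\mathcal{R}$ as well, so conjugation by $u^{-\ell}$ is an algebra automorphism of $\D_\mathcal{R}$.

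For the passage to $\D_\mathcal{R}^W$, observe that since $\{\beta_1,\dots,\beta_r\}$ is an orthogonal basis on which $W$ acts by permutations and sign changes and $\cosh$ is even, the function $u$ is $W$-invariant; hence for $w\in W$ and $D\in\D_\mathcal{R}$ one has $w(u^{-\ell}\circ D\circ u^{\ell})=(wu)^{-\ell}\circ(wD)\circ(wu)^{\ell}=u^{-\ell}\circ(wD)\circ u^{\ell}$, so conjugation by $u^{-\ell}$ commutes with the $W$-action and carries $\D_\mathcal{R}^W$ onto $\D_\mathcal{R}^W$. This proves the first statement. The ``in particular'' is then purely formal: the Harish-Chandra homomorphism $\gamma(m)$ identifies $\D(m)$ with $\polya^W$, which by Chevalley's theorem is a polynomial algebra on $r$ generators, and an algebra isomorphism transports this structure, so $u^{-\ell}\circ\D(m)\circ u^{\ell}$ is a commutative subalgebra of $\D_\mathcal{R}^W$ of rank $r$; and by Lemma~\ref{lemma:commutatorL} together with the elementary fact that an algebra isomorphism maps the centralizer of an element onto the centralizer of its image, applying the automorphism of the first step shows that $u^{-\ell}\circ\D(m)\circ u^{\ell}$ is exactly the centralizer in $\D_\mathcal{R}^W$ of $u^{-\ell}\circ L(m)\circ u^{\ell}=u^{-\ell}\circ\delta_\ell(L(m))\circ u^{\ell}$.

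The only real work is the verification that $\D_\mathcal{R}$ — and not merely the coefficient algebra $\mathcal{R}_0$ — is stable under the twist by $u^{\pm\ell}$, which boils down to the small identity establishing $\tanh(\beta_j/2)\in\mathcal{R}$; once this is in hand, the commutativity, rank, and centralizer statements are immediate consequences of having an algebra automorphism of $\D_\mathcal{R}^W$ and of Lemma~\ref{lemma:commutatorL}.
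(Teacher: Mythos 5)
Your proposal is correct and follows essentially the same route as the paper: reduce stability of $\D_\mathcal{R}$ under conjugation to $u^{-\ell}\circ\partial_\xi\circ u^{\ell}=\partial_\xi+\ell\,u^{-1}\partial_\xi(u)$ with $u^{-1}\partial_\xi(u)=\tfrac12\sum_{j}\beta_j(\xi)\tanh(\beta_j/2)\in\mathcal{R}$, use the $W$-invariance of $u$ to pass to $\D_\mathcal{R}^W$, and obtain the commutativity, rank and centralizer assertions formally from the Harish-Chandra isomorphism and Lemma \ref{lemma:commutatorL}. The only cosmetic difference is your verification that $\tanh(\beta_j/2)\in\mathcal{R}$ via $\coth(\beta_j/2)\big(1-[\cosh(\beta_j/2)]^{-2}\big)$, whereas the paper uses the direct identity $\tanh(\beta_j/2)=\coth\beta_j-1/\sinh\beta_j=4g_{\beta_j}-2g_{\beta_j/2}-1$; both are valid.
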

\begin{proof}
Proving that the map  $D\to u^{-\ell}\circ D\circ u^{\ell}$ defines an algebra automorphism
of $\D_\mathcal{R}$ boils down to showing that if $\xi\in \fa$ then $u^{-\ell}\circ \partial_\xi\circ u^{\ell}\in \D_\mathcal{R}$. For $f\in C^\infty(A)$ (or $C^\infty(\fa)$), by Leibniz' rule, we have
\begin{equation}
\label{eq:conj by uell-x}
(u^{-\ell}\circ \partial_\xi\circ u^{\ell})(f)=u^{-\ell} \partial_\xi(u^{\ell}f)=u^{-\ell} \partial_\xi(u^{\ell})f+ \partial_\xi(f)
          = \big(\ell\, u^{-1} \partial_\xi(u)+ \partial_\xi\big)(f)\,,
\end{equation}
where
\begin{equation}
\label{eq:x-log-ul}
u^{-1} \partial_\xi(u)=\, \sum_{j=1}^r \frac{\b_j(\xi)}{2} \tanh\big(\frac{\b_j}{2}\big) \in \mathcal{R}
\end{equation}
because
$$\tanh\big(\frac{\b_j}{2}\big)=\coth(\b_j)-\frac{1}{\sinh(\b_j)}=4g_{\b_j}-2g_{\b_j/2}-1\,.$$

 To show that the conjugation by $u^{-\ell}$ maps
$\D_\mathcal{R}^W$ into itself, recall that $W$ acts on the roots
by permutations and sign changes, so $\Sigma_\rms$ is a single
$W$-orbit. Since $\cosh x$ is even, the function $u$ (and so
$u^{\pm \ell}$) is $W$-invariant. Let $D\in \D_\mathcal{R}^W$. For
all $w\in W$ and $f\in C^\infty(\fa)$ (or $C^\infty(A)$), we have
therefore %%
\begin{eqnarray*}
w(u^{-\ell} \circ D \circ u^{\ell})(wf)&=& w\big((u^{-\ell} \circ D \circ u^{\ell})f\big)\\
&=& (wu^{-\ell}) w\big((D \circ u^{\ell})f\big)\\
&=& u^{-\ell}(wD)\big(w(u^{\ell}f)\big)\\
&=& u^{-\ell} D\big((u^{\ell}(wf)\big)\\
&=& (u^{-\ell} \circ D \circ u^{\ell})(wf)\,.
\end{eqnarray*}
Thus $w(u^{\ell} \circ D \circ u^{-\ell})=u^{\ell} \circ D \circ u^{-\ell}$.
The last part of the lemma follows from the first part together with Harish-Chandra's isomorphism and Lemma \ref{lemma:commutatorL}.
\end{proof}

Theorem \ref{thm:coefficients-taul-radial} and the previous lemma allow us to prove the following result from \cite{HS}.

\begin{Cor}
\label{cor:conj-by-ul}
$\delta_\ell$ is an algebra isomorphism of $\D_\ell(G/K)$ onto the subalgebra
$u^{-\ell} \circ \D(m(\ell))) \circ u^{\ell}$ of $\D_\mathcal{R}^W$. Moreover,
$\delta_\ell\big(\D_\ell(G/K) \big)=u^{-\ell} \circ \D(m(\ell))) \circ u^{\ell}$ is the centralizer in $\D_\mathcal{R}^W$ of $\delta_\ell(\Omega)$.
\end{Cor}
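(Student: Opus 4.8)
The plan is to deduce Corollary~\ref{cor:conj-by-ul} by combining the three main facts already established: Theorem~\ref{thm:coefficients-taul-radial} (that $\delta_\ell$ is an injective algebra homomorphism of $\D_\ell(G/K)$ into $\D_\mathcal{R}^W$), Shimeno's conjugation identity \eqref{eq:radial-cong-h1}, and Lemma~\ref{lemma:conj-by-ul} (that conjugation by $u^{-\ell}$ carries $\D(m)$ onto a rank-$r$ commutative subalgebra of $\D_\mathcal{R}^W$, realized as the centralizer of $u^{-\ell}\circ\delta_\ell(L(m))\circ u^\ell$). The only thing left to check beyond the references is the precise matching of the images, and this is essentially a centralizer argument in $\D_\mathcal{R}^W$.

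First I would identify $u^{-\ell}\circ\delta_\ell(L(m))\circ u^\ell$ explicitly. Rewriting \eqref{eq:radcompCasimir} gives $\delta_\ell(\Omega)=L(m)+\tfrac{p^2\ell^2}{4}\sum_j \cosh^{-2}(\beta_j/2)+\tau_{-\ell}(\Omega_\fm)$, so $\delta_\ell(L(m))$ differs from $\delta_\ell(\Omega)$ only by the scalar $\tau_{-\ell}(\Omega_\fm)-\inner{\rho(m)}{\rho(m)}$ plus the explicit multiplication-operator term; applying \eqref{eq:radial-cong-h1} then shows
\begin{equation*}
u^\ell\circ\bigl(\delta_\ell(\Omega)+\inner{\rho(m)}{\rho(m)}-\tau_{-\ell}(\Omega_\fm)\bigr)\circ u^{-\ell}=L(m(\ell))+\inner{\rho(m(\ell))}{\rho(m(\ell))}=D_{p_L}(m(\ell))\in\D(m(\ell))\,.
\end{equation*}
Since conjugation by $u^{\pm\ell}$ and addition of scalars preserve centralizers in $\D_\mathcal{R}^W$, the centralizer of $\delta_\ell(\Omega)$ in $\D_\mathcal{R}^W$ is carried by conjugation by $u^\ell$ onto the centralizer of $D_{p_L}(m(\ell))$, which by Lemma~\ref{lemma:commutatorL} applied to the multiplicity $m(\ell)$ is exactly $\D(m(\ell))$. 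Pulling back, the centralizer of $\delta_\ell(\Omega)$ in $\D_\mathcal{R}^W$ equals $u^{-\ell}\circ\D(m(\ell))\circ u^\ell$, which establishes the last sentence of the statement.

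It remains to show $\delta_\ell(\D_\ell(G/K))$ coincides with this centralizer. One inclusion is easy: $\delta_\ell$ is an algebra homomorphism, $\Omega\in\D_\ell(G/K)$, and $\D_\ell(G/K)$ is commutative (Lemma~\ref{lemma:Dell-poly}), so every $\delta_\ell(D)$ commutes with $\delta_\ell(\Omega)$, whence $\delta_\ell(\D_\ell(G/K))$ is contained in the centralizer. For the reverse inclusion I would compare ranks: by Lemma~\ref{lemma:Dell-poly}, $\D_\ell(G/K)\cong S(\fa_\C)^W$ has rank $r$, and $\delta_\ell$ is injective by Theorem~\ref{thm:coefficients-taul-radial}, so $\delta_\ell(\D_\ell(G/K))$ is a rank-$r$ commutative subalgebra of $\D_\mathcal{R}^W$; on the other hand the centralizer $u^{-\ell}\circ\D(m(\ell))\circ u^\ell$ is also rank $r$ by Lemma~\ref{lemma:conj-by-ul} (via the Harish-Chandra isomorphism $\D(m(\ell))\cong S(\fa_\C)^W$). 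A subalgebra of the same finite rank inside a rank-$r$ commutative algebra, both being finitely generated modules over polynomial rings of the same Krull dimension, forces equality; more concretely one can invoke that $\delta_\ell$ composed with the Harish-Chandra homomorphism $\gamma(m(\ell))$ (after conjugation by $u^\ell$) gives an injective algebra map $S(\fa_\C)^W\to S(\fa_\C)^W$ which, being compatible with the grading/filtration coming from the symbol, must be surjective. I expect this last step — upgrading ``same rank, one contained in the other'' to genuine equality — to be the only subtle point, and it is handled cleanly by tracking the Harish-Chandra homomorphisms on both sides and observing that $\delta_\ell(\Omega)$ has principal symbol $L_\fa$, exactly as $D_{p_L}(m(\ell))$ does, so the induced map on $S(\fa_\C)^W$ is the identity on the top-degree part and hence bijective.
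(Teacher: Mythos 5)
Your argument is essentially the paper's own proof: identify the centralizer of $\delta_\ell(\Omega)$ in $\D_\mathcal{R}^W$ with $u^{-\ell}\circ\D(m(\ell))\circ u^{\ell}$ via Lemma \ref{lemma:commutatorL}, Lemma \ref{lemma:conj-by-ul} and Shimeno's identity \eqref{eq:radial-cong-h1}, get one inclusion from commutativity of $\D_\ell(G/K)$ together with Theorem \ref{thm:coefficients-taul-radial}, and conclude equality by comparing ranks through the Harish-Chandra isomorphisms, exactly as in the text. Your closing symbol/filtration argument is in fact a sound sharpening of the paper's terse ``same rank, thus they coincide'' (note that the passing remark about finitely generated modules over rings of equal Krull dimension would not by itself force equality, but the principal-symbol argument you then give does).
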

\begin{proof}
By Lemmas \ref{lemma:commutatorL} and \ref{lemma:conj-by-ul},
$u^{-\ell} \circ \D(m(\ell))) \circ u^{\ell}$ is a commutative
subalgebra of $\D_\mathcal{R}^W$ which is equal to the centralizer
of $u^{-\ell} \circ  L(m(\ell)) \circ u^{\ell}$. Hence, by
\eqref{eq:radcompCasimir}, it is the centralizer in
$\D_\mathcal{R}^W$ of $\delta_\ell(\Omega)$. Because of Theorem
\ref{thm:coefficients-taul-radial}, $\delta_\ell\big(\D_\ell(G/K)
\big)$ is a commutative subalgebra of $\D_\mathcal{R}^W$
containing $\delta_\ell(\Omega)$. Hence
$\delta_\ell\big(\D_\ell(G/K) \big)$ is contained in the
centralizer of $u^{-\ell} \circ L(m(\ell)) \circ u^{\ell}$.
Furthermore, by Lemma \ref{lemma:Dell-poly}, by the Harish-Chandra
isomorphism and the fact that $\delta_\ell$ and the conjugation by
$u^{-\ell}$ are algebra isomorphisms, these two algebras have the
same rank. Thus they coincide.
\end{proof}

%%%%%%%%%%%%%%%%%%%%%%%%
\section{$\tau_{-\ell}$-Hypergeometric functions}
\label{section:new-cherednik}

Let $(\fa,\Sigma,m,\ell)$ be a quadruple consisting of a
$r$-dimensional Euclidean real vector space $\fa$, a root system
$\Sigma$ of type $BC_r$ in $\fa^*$ and a complex multiplicity
function $m=(m_\rms,m_\rmm,m_\rml=1)$ on $\Sigma$, as in section
\ref{subsection:roots}, together with a complex number $\ell \in
\C$. Unless stated otherwise, we shall keep the notation
introduced in the previous sections. In particular, we denote by
$m(\ell)$ the $1$-parameter deformation of $m$ defined in
\eqref{eq:mult-l}. Notice that many of the constructions below just use that the commutative algebra we are using is of the form $u^{-\ell}\circ \D(m(\ell))\circ u^\ell$. So they could be done without the assumption $m_\rml=1$. In this
case, of course, $m(\ell)$ should be given by \eqref{eq:mult-l-gen}.

For $\xi\in \fa$ define the Cherednik operators $T_{\ell,\xi}\in \D_{\mathcal{R}} \otimes \C[W]$ by
\begin{equation}
\label{eq:Tellx} T_{\ell,\xi}(m)= \frac{\ell}{2}  \sum_{j =1 }^r
\beta_j(\xi) \tanh \Big( \frac{\beta_j}{2}\Big)  + \partial_\xi -
\rho(m(\ell);\xi) + \sum_{\alpha \in \Sigma^+} m_\alpha(\ell)
\alpha(\xi) (1-e^{-2\alpha})^{-1} \otimes (1-r_\alpha).
\end{equation}
Let $u$ be the function defined by \eqref{eq:u}. A simple computation (using that $u$ commutes with $1-r_\alpha$ for $\a\in \Sigma$) shows that
$$
T_{\ell,\xi}(m)= T_\xi(m(\ell)) + \ell u^{-1} \partial_\xi(u)
=u^{-\ell} \circ T_\xi(m(\ell)) \circ u^{\ell},
$$
where $T_\xi(m(\ell))$ is the Cherednik operator, defined in \eqref{eq:T}, corresponding to $m(\ell)$ of the multiplicity function $m$.
It follows that
$\{T_{\ell,\xi}(m) : \xi \in \fa\}$ is a commutative family of differential-reflection operators.
So, the map $\xi \to T_{\ell,\xi}(m)$ extends uniquely to an algebra homomorphism $p \to
T_{\ell,p}(m)$ from $\polya$ to $\D_{\mathcal{R}} \otimes \C[W]$ such that
$T_{\ell,p}(m)=u^{-\ell} \circ T_p(m(\ell)) \circ u^{\ell}$.
In particular, one can define the $\tau_{-\ell}-$Heckman-Opdam Laplacian
\begin{equation}
\label{eq:tellHOLaplacian}
T_{\ell, p_L}(m)=\sum_{j=1}^r T_{\ell,\xi_j}(m)^2= u^{-\ell}\circ T_{p_L}(m(\ell)) \circ u^{\ell}\,,
\end{equation}
where $\{\xi_j\}_{j=1}^r$ is any orthonormal basis of $\fa$ and $p_L$ is defined by $p_L(\l):=\inner{\l}{\l}$ for $\l \in \frakacs$.

Let
\begin{equation}
\label{eq:beta-taul-HOLaplacian}
L_\ell(m)
=L(m)+\frac{\ell^2 p^2}{4} \sum_{j=1}^r \frac{1}{\cosh^2(\frac{\b_j}{2})}\,.
\end{equation}
In particular, if $m=(m_\rms,m_\rmm,m_\rml=1)$ is geometric, then
\begin{equation}
\label{eq:Lell-deltaell}
L_\ell(m)=\delta_\ell(\Omega)-\tau_{-\ell}(\Omega_\fm)\,.
\end{equation}
See \eqref{eq:radcompCasimir}.
Let $m=(m_\rms,m_\rmm,m_\rml=1)$ be arbitrary.  Apply \cite[Corollary 2.1.2]{HS} (using our symmetric space notation) with $m$ and $k$ respectively replaced by our $m(\ell)$ and $m$ and with $l=(l_\rms,l_\rmm,l_\rml)$ so that $l_\rmm=0$ and $l^2_\rms=-l^2_\rml=-4\ell^2$.
We obtain:
\begin{equation}
\label{eq:L-Lell}
L_\ell(m)+\inner{\rho(m)}{\rho(m)}= u^{-\ell} \circ \Big( L(m(\ell))+\inner{\rho(m(\ell))}{\rho(m(\ell))} \Big)\circ u^{\ell}\,.
\end{equation}
This together with \eqref{eq:HOLaplacian} yields the explicit expression
\begin{eqnarray}
\label{eq:taul-HOLaplacian}
T_{\ell, p_L}(m)
&=&L_\ell(m)+\inner{\rho(m)}{\rho(m)}
-\sum_{\a\in\Sigma^+} m_\a(\ell) \frac{\inner{\a}{\a}}{\sinh^2 \a} \otimes (1-r_\a) \\
\label{eq:taul-tau0-HOLaplacian}
&=&T_{p_L}(m)+\frac{\ell p^2}{4} \sum_{j=1}^r \frac{1}{\cosh^2(\frac{\b_j}{2})} \otimes
\big( \ell -2(1-r_{\beta_j})\big)\,.
\end{eqnarray}
Recall the operator $\beta$ from \eqref{eq:beta}.
If $p\in \polya$, then $\beta\big(T_{\ell,p}(m)\big)=
u^{-\ell}\circ \beta\big(T_{p}(m(\ell))\big) \circ u^{\ell}\in \D_\mathcal{R}$.
Set $D_{\ell,p}(m)=\beta\big(T_{\ell,p}(m)\big)$ for $p\in \polya^W$. For instance,
$D_{\ell,p_L}(m)=L_\ell(m)+\inner{\rho(m)}{\rho(m)}$. Furthermore, set
\begin{equation}
\label{eq:Dlm}
\D_\ell(m)=\{D_{\ell,p}(m): p\in \polya^W\}\,.
\end{equation}

\begin{Prop}
In the above notation, $\D_\ell(m)=u^{-\ell}\circ \D(m(\ell)) \circ u^{\ell}$ is a commutative subalgebra of $\D_\mathcal{R}^W$ of rank $r$. It is the centralizer of $L_\ell(m)$
in $\D_\mathcal{R}^W$. As a consequence, $\D_\ell(m)=\D_{-\ell}(m)$.
Furthermore, if $m=(m_\rms,m_\rmm,m_\rml=1)$ is geometric and $\ell\in \Z$, then $\D_\ell(m)=\delta_{\ell}(\D_\ell(G/K))$.
\end{Prop}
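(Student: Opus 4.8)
The plan is to assemble the statement from the conjugation Lemma~\ref{lemma:conj-by-ul}, the identity \eqref{eq:L-Lell} for the $\tau_{-\ell}$-Heckman--Opdam Laplacian, and Corollary~\ref{cor:conj-by-ul}. First I would observe that the equality $\D_\ell(m)=u^{-\ell}\circ \D(m(\ell))\circ u^{\ell}$ is immediate from the definitions: by construction of $D_{\ell,p}(m)$ one has, for every $p\in\polya^W$,
\[
D_{\ell,p}(m)=\beta\big(T_{\ell,p}(m)\big)=u^{-\ell}\circ\beta\big(T_{p}(m(\ell))\big)\circ u^{\ell}=u^{-\ell}\circ D_{p}(m(\ell))\circ u^{\ell},
\]
while $\D(m(\ell))=\{D_p(m(\ell)):p\in\polya^W\}$; hence $\D_\ell(m)$ is precisely the image of $\D(m(\ell))$ under conjugation by $u^{-\ell}$.

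Next I would invoke Lemma~\ref{lemma:conj-by-ul} for the (possibly complex-valued) multiplicity function $m(\ell)$. Since conjugation by $u^{-\ell}$ is an algebra automorphism of $\D_\mathcal{R}$ carrying $\D_\mathcal{R}^W$ onto itself, and $\D(m(\ell))$ is a commutative subalgebra of $\D_\mathcal{R}^W$ of rank $r$ (Harish-Chandra's isomorphism, valid for complex multiplicities as recalled before Lemma~\ref{lemma:commutatorL}), its image $\D_\ell(m)$ is again such a subalgebra. For the centralizer statement, Lemma~\ref{lemma:commutatorL} applied to $m(\ell)$ gives that $\D(m(\ell))$ is the centralizer of $L(m(\ell))$ in $\D_\mathcal{R}^W$; conjugating by $u^{-\ell}$ shows that $\D_\ell(m)$ is the centralizer of $u^{-\ell}\circ L(m(\ell))\circ u^{\ell}$ in $\D_\mathcal{R}^W$. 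By \eqref{eq:L-Lell}, the latter operator equals $L_\ell(m)+\inner{\rho(m)}{\rho(m)}-\inner{\rho(m(\ell))}{\rho(m(\ell))}$, i.e.\ $L_\ell(m)$ plus a scalar; as scalars are central, the two operators have the same centralizer, so $\D_\ell(m)$ is the centralizer of $L_\ell(m)$ in $\D_\mathcal{R}^W$.

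The relation $\D_\ell(m)=\D_{-\ell}(m)$ then follows at once: by \eqref{eq:beta-taul-HOLaplacian} the operator $L_\ell(m)$ depends on $\ell$ only through $\ell^2$, hence $L_\ell(m)=L_{-\ell}(m)$, and two subalgebras of $\D_\mathcal{R}^W$ each characterized as the centralizer of the same element must coincide. Lastly, for a geometric multiplicity $m$ and $\ell\in\Z$, the equality $\D_\ell(m)=\delta_\ell(\D_\ell(G/K))$ is exactly the content of Corollary~\ref{cor:conj-by-ul}, which identifies $\delta_\ell(\D_\ell(G/K))$ with $u^{-\ell}\circ\D(m(\ell))\circ u^{\ell}$.

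I do not anticipate a genuine obstacle, since the whole statement is an assembly of results already established: Theorem~\ref{thm:coefficients-taul-radial} (radial components have coefficients in $\mathcal{R}$), Lemma~\ref{lemma:conj-by-ul} (conjugation by $u^{-\ell}$), the Laplacian identity \eqref{eq:L-Lell}, and Corollary~\ref{cor:conj-by-ul}. The only two points that warrant an explicit line are (i) that passing from $u^{-\ell}\circ L(m(\ell))\circ u^{\ell}$ to $L_\ell(m)$ changes the operator only by an additive constant, so that the centralizers agree, and (ii) the parity of $L_\ell(m)$ in $\ell$, which yields $\D_\ell(m)=\D_{-\ell}(m)$ without further work.
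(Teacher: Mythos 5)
Your proof is correct and follows essentially the same route as the paper, which derives the first part from Lemma \ref{lemma:conj-by-ul} and Corollary \ref{cor:conj-by-ul} and the equality $\D_\ell(m)=\D_{-\ell}(m)$ from $L_\ell(m)=L_{-\ell}(m)$. The only difference is that you make explicit the two small points the paper treats as immediate, namely the scalar shift coming from \eqref{eq:L-Lell} when identifying the centralizer of $L_\ell(m)$, and the appeal to Lemma \ref{lemma:commutatorL} for the (possibly complex) multiplicity $m(\ell)$.
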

\begin{proof}
The first part is an immediate consequence of Lemma \ref{lemma:conj-by-ul} and Corollary  \ref{cor:conj-by-ul}. The equality $\D_\ell(m)=\D_{-\ell}(m)$ is a consequence of the
fact that $L_\ell(m)=L_{-\ell}(m)$ by \eqref{eq:beta-taul-HOLaplacian}.
\end{proof}

Let $\l\in \fa^*_\C$ be fixed. The \textit{$\tau_{-\ell}-$Heckman-Opdam
hypergeometric function with spectral parameter $\l$} is the unique
$W$-invariant analytic solution $F_{\ell,\l}(m)$ of the system of
differential equations %%
\begin{equation}
  \label{eq:hypereq}
D_{\ell,p}(m) f=p(\l)f \qquad (p \in \polya^W),
\end{equation}
which satisfies $f(0)=1$.

The \textit{non-symmetric $\tau_{-\ell}-$hypergeometric function with
spectral parameter $\l$} is the unique analytic solution
$G_{\ell,\l}(m)$ of the system of differential equations %%
\begin{equation}
  \label{eq:hypereq}
T_{\ell,\xi}(m) g=\l(\xi) g \qquad (\xi \in \fa),
\end{equation}
which satisfies $g(0)=1$.

The symmetric and non-symmetric $\tau_{-\ell}-$Heckman-Opdam
hypergeometric functions are therefore (suitably normalized) joint
eigenfunctions of the commutative algebras $\D_\ell(m)$ and
$\{T_{\ell,p}:p\in S(\fa_\C)\}$, respectively. Notice that the
equality $\D_\ell(m)=\D_{-\ell}(m)$ yields
\begin{equation}
\label{eq:Fellm-even-ell}
F_{\ell,\l}(m)=F_{-\ell,\l}(m)\qquad (\l\in\fa_\C^*)\,.
\end{equation}
On the other hand, $G_{\ell,\l}(m)\neq G_{-\ell,\l}(m)$ in general (as one can see
from the rank-one example at the end of this section).
By definition, $F_{\ell,\l}(m;x)$ is $W$-invariant in $x\in \fa$ and in $\l\in \fa^*_\C$.
Furthermore, since $u^\ell\circ D_{\ell,p}(m)\circ u^{-\ell}=D_{p}(m(\ell))$ and
$u^\ell\circ T_{\ell,\xi}(m)\circ u^{-\ell}=T_{\xi}(m(\ell))$, one obtains for all $\l\in \fa_\C^*$:
\begin{eqnarray}
\label{eq:Fell-F}
F_{\ell,\l}(m)&=&u^{-\ell} F_{\l}(m(\ell))\,,\\
G_{\ell,\l}(m)&=&u^{-\ell} G_{\l}(m(\ell))\,.
\label{eq:Gell-G}
\end{eqnarray}
As in the case $\ell=0$,
\begin{equation}
\label{eq:Fell-Gell}
F_{\ell,\l}(m;x)=\frac{1}{|W|}\, \sum_{w\in W} G_{\ell,\l}(m;w^{-1}x) \qquad (x \in \fa)\,.
\end{equation}
As a consequnce of \eqref{eq:Fell-F} and of the corresponding properties of the Heckman-Opdam hypergeometric functions (see e.g. \cite[Theorem 4.4.2]{HS}, \cite[Theorem 3.15]{OpdamActa}), the
$F_{\ell,\l}(m;x)$ and $G_{\ell,\l}(m;x)$ are entire functions of $\lambda \in \fa_\C^*$, analytic functions in $x\in \fa$
and meromorphic functions of $m=(m_\rms,m_\rmm,m_\rml=1) \in \C^2$.  Let $\mathcal M_{\C,0}=\{m=(m_\rms,m_\rmm,m_\rml)\in \C^3:\Re(m_\rms+m_\rml)\geq 0 \}$.
Observe that $m\in \mathcal M_{\C,0}$ if and only if $m(\ell)\in  \mathcal M_{\C,0}$.  One can show (see Appendix A below) that for fixed $(\l,x)\in \fa^*_\C \times \fa$, the functions $F_\l(m;x)$ and $G_\l(m;x)$ are holomorphic in a neighborhood of $\mathcal M_{\C,0}$. It follows that $F_{\ell,\l}(m;x)$ and $G_{\ell,\l}(m;x)$ are holomorphic near each $m=(m_\rms,m_\rmm,m_\rml=1)\in \mathcal M_0$.

%%%%%%%%%%%%%%%%
\subsection{The geometric case}
We keep the notation of the subsection \ref{subsection:hermitian}
and section \ref{section:diff-l-on-G/K}.
Let $m=(m_\rms,m_\rmm,m_\rml=1)$ be a geometric multiplicity function, as in Remark \ref{rem:hermitian}. For $\ell\in \Z$ (or $\ell \in \R$ by passing to universal covering spaces) we have $\D_\ell(m)=\delta_\ell(\D_\ell(G/K))$. So $F_{\ell,\l}(m)$
coincides with the restriction to $A\equiv \fa$ of the
$\tau_{-\ell}-$spherical function $\varphi_{\ell,\l}$ with spectral parameter $\l$ on the line bundle $E_\ell$ over $G/K$.  Many authors have studied the $\tau_{-\ell}-$spherical functions in this context. See e.g. \cite[\S 5.2--5.5]{HS}, \cite[\S 6]{ShimenoEigenfunctions}, \cite[\S 3 and
5]{ShimenoPlancherel}, \cite{HoOl14}.
The following proposition summarizes the basic properties of the
the $\tau_{-\ell}-$spherical functions. As in the $K$-biinvariant case, they can be explicitly given by an integral formula, which extends to arbitrary real $\ell$'s the classical integral formula by Harish-Chandra's for the spherical function $\varphi_\l=\varphi_{0,\lambda}$ on $G/K$.

\begin{Prop}
For $\ell \in \mathbb R$ and $\lambda \in \frakacs$
define
\begin{equation}
\label{eq:integral-formula}
 \varphi_{\ell,\lambda}(g) = \int_{K/Z(G)} e^{(\lambda -
\rho)(H(gk))} \tau_{\ell}(\kappa(gk)^{-1}k)~dk, \qquad g \in G.
\end{equation}
The set of functions $\varphi_{\ell, \lambda}$, $\lambda \in
\frakacs$ exhausts the class of (elementary) $\tau_{-\ell}-$spherical functions on $G.$ Two such functions $\varphi_{\ell,\lambda}$
and $\varphi_{\ell,\mu}$ are equal if and only if $\mu =
w\lambda$ for some $w \in W.$ Moreover, for a fixed $g \in G,$
$\varphi_{\ell, \lambda}(g)$ is holomorphic in $(\lambda, \ell)
\in \frakacs \times \mathbb C$. Furthermore, $\varphi_{\ell, \lambda}=\varphi_{-\ell, \lambda}$.
\end{Prop}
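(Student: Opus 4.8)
The plan is to recognize the right-hand side of \eqref{eq:integral-formula} as the normalized $\tau_{-\ell}$-spherical function of spectral parameter $\lambda$ and then to read off the remaining assertions from the Harish-Chandra isomorphism of Lemma \ref{lemma:Dell-poly}. First I would check that the integrand is invariant under $k\mapsto kz$ for $z\in Z(G)\subset K$, so that the integral over $K/Z(G)$ is well defined, and that $\varphi_{\ell,\lambda}\in C^\infty_\ell(G)$ with $\varphi_{\ell,\lambda}(e)=1$. These are routine verifications using the Iwasawa identities $H(k_1g)=H(g)$ and $\kappa(k_1g)=k_1\kappa(g)$ for $k_1\in K$, the invariance of the Haar measure on $K/Z(G)$ under the substitution $k\mapsto k_2^{-1}k$, and the fact that $\tau_\ell\colon K\to\C^\times$ is a homomorphism (hence a class function with $\tau_\ell(k)^{-1}=\tau_{-\ell}(k)$); putting $g=e$ gives $\varphi_{\ell,\lambda}(e)=\int_{K/Z(G)}1\,dk=1$.

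The heart of the matter --- and the step I expect to be the main obstacle in a self-contained account --- is that $\varphi_{\ell,\lambda}$ is a joint eigenfunction of $\D_\ell(G/K)$ with the eigenvalue character that the Harish-Chandra isomorphism $\gamma_\ell$ of Lemma \ref{lemma:Dell-poly} attaches to $\lambda$, i.e.\ $D\varphi_{\ell,\lambda}=\gamma_\ell(D)(\lambda)\varphi_{\ell,\lambda}$ for all $D\in\D_\ell(G/K)$. This is the line-bundle analogue of Harish-Chandra's classical theorem on spherical functions and was established by Shimeno: the integral in \eqref{eq:integral-formula} exhibits $\varphi_{\ell,\lambda}$ as the average over $K/Z(G)$ of the $\tau_{-\ell}$-equivariant Poisson kernel attached to $(\lambda,b)$, $b\in K/M$, which for each fixed $b$ lies in $C^\infty(G/K;\tau_\ell)$ and is a joint eigenfunction of $\D_\ell(G/K)$ with eigenvalue $\gamma_\ell(\cdot)(\lambda)$; here one uses the structure $\D_\ell(G/K)\cong\fUg^K/(\fUg^K\cap\fUg\fk_\ell)$ of Lemma \ref{lemma:Dell-poly}, the Iwasawa decomposition $\fUg=\fUa\fUk\oplus\theta(\fn_\C)\fUg$ of \eqref{eq:decomposition-Ug-1}, the scalar action of the $\fUa$-part on the kernel (through $p\mapsto p(\lambda-\rho)$), and the annihilation of $C^\infty(G/K;\tau_\ell)$ by $\fk_\ell=\{X+\tau_\ell(X):X\in\fk\}$; integrating in $b$, an operation that commutes with $D$, then yields the eigenvalue equation. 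I would cite \cite{ShimenoEigenfunctions} for the details, and \cite[\S 5--6]{HC58} for the case $\ell=0$.

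Granting this, the rest is formal. By Lemma \ref{lemma:Dell-poly} and Chevalley's theorem, the algebra characters of $\D_\ell(G/K)$ are exactly the maps $D\mapsto\gamma_\ell(D)(\lambda)$ with $\lambda\in\frakacs$, and two parameters give the same character if and only if they are $W$-conjugate. An elementary $\tau_{-\ell}$-spherical function $\psi$, being a normalized joint eigenfunction of $\D_\ell(G/K)$, corresponds to one such $\lambda$; restricting to $A$ and using $\delta_\ell(\D_\ell(G/K))=\D_\ell(m)$ (Corollary \ref{cor:conj-by-ul}), $\psi|_A$ is the unique $W$-invariant analytic joint eigenfunction of $\D_\ell(m)$ with value $1$ at $0$ and eigenvalue $\lambda$, namely $F_{\ell,\lambda}(m)=\varphi_{\ell,\lambda}|_A$; since an element of $C^\infty_\ell(G)$ is determined by its restriction to $A$ via $G=K\overline{A^+}K$, we conclude $\psi=\varphi_{\ell,\lambda}$. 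This gives the exhaustion, and the criterion $\varphi_{\ell,\lambda}=\varphi_{\ell,\mu}\iff\mu\in W\lambda$ follows because $\varphi_{\ell,\lambda}|_A=F_{\ell,\lambda}(m)$ is $W$-invariant in $\lambda$, while equal functions share eigenvalue characters. For the holomorphy I would fix $g$ and write the $K_a$-component of $\kappa(gk)^{-1}k$ as $\exp(\vartheta(g,k)Z)$ with $\vartheta$ continuous on the compact set $K/Z(G)$; then the integrand equals $e^{(\lambda-\rho)(H(gk))+i\vartheta(g,k)\ell}$, which is entire in $(\lambda,\ell)$ and bounded on $K/Z(G)$ locally uniformly in $(\lambda,\ell)$, so Morera's theorem shows $\varphi_{\ell,\lambda}(g)$ is holomorphic on $\frakacs\times\C$. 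Finally $\varphi_{\ell,\lambda}=\varphi_{-\ell,\lambda}$ follows from \eqref{eq:Fellm-even-ell}: since $\delta_\ell(\Omega)$ is even in $\ell$ by \eqref{eq:radcompCasimir}, Corollary \ref{cor:conj-by-ul} gives $\D_\ell(m)=\D_{-\ell}(m)$, so the restrictions to $A$ of $\varphi_{\pm\ell,\lambda}$ both equal $F_{\ell,\lambda}(m)=F_{-\ell,\lambda}(m)$, and hence, being determined by their restrictions to $A$, so do the functions on $G$.
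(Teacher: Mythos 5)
Your overall route is compatible with the paper's: the paper's entire proof of this proposition is the citation ``See \cite[Proposition 6.1]{ShimenoEigenfunctions}'', and you likewise defer the one substantive analytic fact (that \eqref{eq:integral-formula} is a joint eigenfunction of $\D_\ell(G/K)$ with character $\gamma_\ell(\cdot)(\lambda)$) to Shimeno, then reconstruct the formal consequences. Those reconstructions are sound: the well-definedness over $K/Z(G)$ and the $\tau_{-\ell}$-equivariance are the routine Iwasawa computations you describe; exhaustion and the $W$-criterion do follow from Lemma \ref{lemma:Dell-poly}, Chevalley, Corollary \ref{cor:conj-by-ul} and the uniqueness of the normalized $W$-invariant analytic joint eigenfunction of $\D_\ell(m)$ (available here since $m(\ell)\in\mathcal M_0\subset\mathcal M_{F,\rm reg}$); and the Morera argument for joint holomorphy in $(\lambda,\ell)$ is correct because $K=K_s\times K_a$ makes the $K_a$-component $\vartheta(g,k)$ well defined, continuous, and $Z(G)$-invariant.

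The one step that does not hold up as written is the last one. Your argument shows $\varphi_{\ell,\lambda}|_A=\varphi_{-\ell,\lambda}|_A$ (both equal $F_{\ell,\lambda}(m)=F_{-\ell,\lambda}(m)$ by \eqref{eq:Fellm-even-ell}), but the concluding sentence ``being determined by their restrictions to $A$, so do the functions on $G$'' is not valid: the determination-by-$A$ principle applies to two elements of the \emph{same} space $C^\infty_\ell(G)$, whereas $\varphi_{\ell,\lambda}\in C^\infty_\ell(G)$ and $\varphi_{-\ell,\lambda}\in C^\infty_{-\ell}(G)$ transform differently under $K\times K$. Indeed, for $k\in K$ the integral formula gives $\varphi_{\ell,\lambda}(k)=\tau_{-\ell}(k)$ and $\varphi_{-\ell,\lambda}(k)=\tau_{\ell}(k)$, so literal equality on $G$ fails whenever $\ell\neq 0$. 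The identity $\varphi_{\ell,\lambda}=\varphi_{-\ell,\lambda}$ must therefore be read as an identity of restrictions to $A$ (equivalently of the radial functions), which is also the only sense in which the paper uses it (e.g.\ in the proof of Corollary \ref{Cor:estimates-spherical}(1), where reality can only hold on $A$). So the content of your argument is exactly what is needed; just delete the final leap back to all of $G$, or state explicitly that the equality is one of restrictions to $A\equiv\fa$.
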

\begin{proof}
See \cite[Proposition 6.1]{ShimenoEigenfunctions}.
\end{proof}

As an immediate consequence of the integral formula for $\varphi_{\ell, \l}$ and the properties of Harish-Chandra's spherical functions $\varphi_\l$ ($\ell = 0$ case), we obtain the following corollary.

\begin{Cor}\label{Cor:estimates-spherical}
Let $\ell\in \R,$ then we have:
\begin{enumerate}
\item $\varphi_{\ell, \l}$ is real valued for $\l\in\fa^*$,
\item $|\varphi_{\ell,\l}|\leq \varphi_{\Re\l}$ for $\l\in\fa_\C^*$,
\item $|\varphi_{\ell,\l}(m)|\leq 1$ for $\l \in
C(\rho(m)) + i \fa^\ast$, where $C(\rho(m))$ is the convex hull of
the set $\{w\rho(m): w \in W\}.$
\end{enumerate}
\end{Cor}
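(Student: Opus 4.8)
The plan is to read off all three statements directly from the integral representation \eqref{eq:integral-formula}, using only the elementary fact that $\tau_\ell$ takes values in the unit circle when $\ell\in\R$, together with the corresponding classical properties of Harish-Chandra's spherical functions $\varphi_\l=\varphi_{0,\l}$, which are recovered from the $\ell=0$ instance of \eqref{eq:integral-formula}, i.e.\ from $\varphi_\mu(g)=\int_{K/Z(G)}e^{(\mu-\rho)(H(gk))}\,dk$. Here one uses that the integrand of \eqref{eq:integral-formula} is invariant under the right action of the central subgroup $Z(G)\subset K$: since $Z(G)$ is fixed by the Iwasawa projections (one has $H(gkz)=H(gk)$, and $\kappa(gk)^{-1}k$ is unchanged when $k$ is replaced by $kz$), integration over $K/Z(G)$ is well defined and, with the normalization of the cited Proposition, agrees with the usual integration over $K$.

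For (2) I would estimate the integrand of \eqref{eq:integral-formula} in modulus: $|\tau_\ell(\kappa(gk)^{-1}k)|=1$ and $|e^{(\l-\rho)(H(gk))}|=e^{(\Re\l-\rho)(H(gk))}$, so $|\varphi_{\ell,\l}(g)|\le\int_{K/Z(G)}e^{(\Re\l-\rho)(H(gk))}\,dk$, and the right-hand side is precisely $\varphi_{\Re\l}(g)$ by the $\ell=0$ integral formula; this is (2). Statement (1) then follows by complex conjugation: for $\l\in\fa^*$ the exponential factor is real, and $\overline{\tau_\ell(k')}=\tau_\ell(k')^{-1}=\tau_{-\ell}(k')$ since $\ell$ is real, whence $\overline{\varphi_{\ell,\l}(g)}=\varphi_{-\ell,\l}(g)$; the last assertion of the preceding Proposition, $\varphi_{-\ell,\l}=\varphi_{\ell,\l}$, then gives reality.

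For (3), take $\l\in C(\rho(m))+i\fa^*$, so $\Re\l\in C(\rho(m))$; by (2) it suffices to show $0<\varphi_\mu(g)\le 1$ for every real $\mu\in C(\rho(m))$ and every $g\in G$. Positivity is immediate from the integral formula. For the upper bound, observe that $\mu\mapsto\varphi_\mu(g)$ is log-convex: the integrand $e^{(\mu-\rho)(H(gk))}$ is log-linear in $\mu$, so the generalized Hölder inequality applied to the $\ell=0$ integral formula gives $\varphi_{\sum_w c_w\, w\rho(m)}(g)\le\prod_{w\in W}\varphi_{w\rho(m)}(g)^{c_w}$ for all $c_w\ge 0$ with $\sum_w c_w=1$. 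Since $\varphi_{\rho(m)}\equiv 1$ (the exponent $\rho(m)-\rho(m)$ vanishes) and $\varphi_{w\rho(m)}=\varphi_{\rho(m)}$ by $W$-invariance of the spherical function, every $\mu$ in the convex hull $C(\rho(m))$ satisfies $\varphi_\mu(g)\le 1$; combined with (2) this yields $|\varphi_{\ell,\l}|\le\varphi_{\Re\l}\le 1$, which is (3).

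The only point demanding care — essentially the sole obstacle — is the bookkeeping with $Z(G)$: one must check that the integrand of \eqref{eq:integral-formula} is genuinely right-$Z(G)$-invariant, so that integration over $K/Z(G)$ makes sense and its $\ell=0$ specialization reproduces Harish-Chandra's $\varphi_\mu$ with the same normalization used in (2) and again when specializing the spectral parameter to the Weyl orbit of $\rho(m)$. As noted, this follows at once from the centrality of $Z(G)$, so no estimate beyond the classical $\ell=0$ facts is needed; accordingly this Corollary provides only the ``easy'' direction of the boundedness characterization, the converse for $\ell\ne 0$ being established in the later sections.
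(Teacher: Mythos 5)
Your argument is correct, and for parts (1) and (2) it is essentially the paper's proof: (2) is exactly the modulus estimate on the integrand of \eqref{eq:integral-formula} using unitarity of $\tau_\ell$, and your conjugation argument for (1) (giving $\overline{\varphi_{\ell,\l}}=\varphi_{-\ell,\l}=\varphi_{\ell,\l}$) is just a rephrasing of the paper's remark that $\tau_\ell+\tau_{-\ell}$ is real valued together with $\varphi_{\ell,\l}=\varphi_{-\ell,\l}$. The genuine difference is in (3): the paper simply invokes the Helgason--Johnson theorem \cite{HeJo} on bounded spherical functions to get $\varphi_{\Re\l}\leq 1$ for $\Re\l\in C(\rho(m))$, whereas you reprove the relevant inequality directly, writing $\mu=\sum_w c_w\,w\rho(m)$ and using log-linearity of the integrand in the spectral parameter, the generalized H\"older inequality for the normalized measure on $K/Z(G)$, the $W$-invariance $\varphi_{w\rho(m)}=\varphi_{\rho(m)}$, and $\varphi_{\rho(m)}\equiv 1$. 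This is the standard proof of the ``easy'' half of Helgason--Johnson, so your route buys a self-contained, elementary argument at the cost of re-deriving a cited classical fact; it proves exactly what is needed here (only the bound $\leq 1$ on the tube over $C(\rho(m))$, not the converse, which for $\ell\neq 0$ is the content of Theorem \ref{thm:bdd}). Your care with the right-$Z(G)$-invariance of the integrand (centrality gives $H(gkz)=H(gk)$ and $\kappa(gkz)^{-1}kz=\kappa(gk)^{-1}k$) is a sound check that the paper leaves implicit.
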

\begin{proof}
The first property follows from the equality $\varphi_{\ell,\l}=\varphi_{-\ell,\l}$ because $\tau_\ell+\tau_{-\ell}$ is real valued. Part (2) is a consequence of the integral formula and the
fact that $\tau_\ell$ is unitary. Finally, (3) follows from (2) and the theorem by Helgason and Johnson characterizing the Harish-Chandra's spherical functions which are bounded; see \cite{HeJo}.
\end{proof}

We conclude this subsection by remarking that, because of the factor coming from
$\tau_\ell$, the integral formula \eqref{eq:integral-formula} does not immediately imply that for $\lambda\in \mathfrak{a}^*$ one has $\varphi_{\ell,\l}>0$ unless $\ell=0$. Some sufficient conditions for the positivity will be determined in Lemma \ref{lemma:pos-rank1} for the rank one case and in Corollary \ref{cor:pos-est-ell}, (b), in the general case.

\subsection{The rank-one case}
We speak of rank-one case when $\frak a$ is one dimensional  and
$\Sigma$ is of type $BC_1$. Geometrically this corresponds to
$G=\SU(1,q)$. Then $\Sigma^+=\{\beta/2,\beta\}$.
We identify $\frak a^*$ and $\frak a$ with $\R$ (and their complexifications with $\C$)
by setting $\beta/2\equiv 1$ and $H\equiv 1$, where $H\in \fa$ satisfies $\beta/2(H)=1$.
In particular, this identifies $\rho(m)$ with $\frac{m_\rms}{2}+ m_\rml=\frac{m_\rms}{2}+1$.
The Weyl chamber $\frak a^+$ coincides with the half-line $(0,+\infty)$.
The Weyl group $W$ reduces to $\{-1,1\}$ acting on $\R$ and $\C$ by multiplication.
We normalize the inner product so that $\inner{\beta/2}{\beta/2}=1$.

The functions  $F_\l(m)$ and $G_\l(m)$ can be written in terms of Gauss's hypergeometric functions or, more precisely,
Jacobi's functions; see \cite[p. 90]{OpdamActa}.
The Jacobi function
$$\varphi^{(a,b)}_{i\l}(x)= \hyper{\tfrac{1}{2}(a+b+1-\l)}{\tfrac{1}{2}(a+b+1+\l)}{a+1}{-\sinh^2 x}$$
is defined for all $a,b,\l\in \C$ with $a\neq -1,-2,\dots$ and $x\in \R$, see \cite[(1.1) and (2.4)]{Koorn84}.
Then, with
$$
a=\frac{1}{2}(m_\rms+m_\rml-1)=\frac{1}{2}m_\rms, \qquad b(\ell)=\frac{1}{2}(m_\rml-1)-\ell=-\ell
$$
and hence $\rho(m(\ell))=\rho(m)-\ell=a+1-\ell$,
one obtains from \cite[(2.4)]{Koorn84} and \cite[p. 90]{OpdamActa}  (for all values of the parameters for which the functions are well defined)
\begin{eqnarray}
\label{eq:Fell-rank1}
F_{\ell,\l}(m,x)&=&(\cosh x)^{-\ell} F_{\l}(m(\ell),x)=(\cosh x)^{-\ell} \varphi^{(a,-\ell)}_{i\l}(x) \notag \\
&=&(\cosh x)^{-\ell} \hyper{\frac{\rho(m)-\l-\ell}{2}}{\frac{\rho(m)+\l-\ell}{2}}{\rho(m)}{-\sinh^2 x}\,.\\
G_{\ell,\l}(m,x)&=&(\cosh x)^{-\ell} G_{\l}(m(\ell),x) \notag\\
&=&(\cosh x)^{-\ell} \Big[ \varphi^{(a,-\ell)}_{i\l}(x) +\frac{a+1+\l}{4(a+1)} \sinh(2x) \varphi^{(a+1,-\ell+1)}_{i\l}(x) \Big]
\notag \\
\label{eq:Gell-rank1}
&=&F_{\ell,\l}(m,x) + \frac{1}{4} \Big( 1+\frac{m_\rms/2 + 1 - \ell + \lambda}{m_\rms/2+1} \,   \Big) \sinh(2x)  F_{\ell,\l}(m+2\cdot 1_\rms,x)\,,
\end{eqnarray}
where $2 \cdot 1_\rms$ is the multiplicity function defined by $2 \cdot 1_\rms(\beta/2)=2$ and $2\cdot 1_\rms(\beta)=0$.
In particular, the classical relation $\hyper{\a}{\b}{\gamma}{z}=(1-z)^{\gamma-\a-\b}  \hyper{\gamma-\a}{\gamma-\b}{\gamma}{z}$ (see e.g. \cite[2.1.4(23)]{Er}) shows directly that $F_{\ell,\l}(m,x)=F_{-\ell,\l}(m,x)$ and hence, by \eqref{eq:Gell-rank1},
\begin{eqnarray*}
G_{-\ell,\l}(m,x)-G_{\ell,\l}(m,x)= \frac{\ell}{2(m_\rms/2+1)}\, \sinh(2x)  F_{\ell,\l}(m+2\cdot 1_\rms,x)\,.
\end{eqnarray*}

In the rank-one case, estimates for the hypergeometric functions
$F_{\l}(m(\ell))$  (for $\Re\l$ in a suitable neighborhood of $0$) can be obtained from some classical integral formulas for Gauss' hypergeometric function.
The hypergeometric identity
$$
\hyper{\a}{\b}{\gamma}{z}=(1-z)^{-\a}\hyper{\a}{\gamma-\b}{c}{\frac{z}{z-1}}
$$
(see \cite[2.1.4.(22)]{Er}) with $z =-\sinh^2 x$, yields the alternative expression
\begin{equation}
\label{Fell-rank1-5}
F_{\ell,\l}(m;x)=(\cosh x)^{\l-\rho(m)} \hyper{\frac{\rho(m)-\l-\ell}{2}}{\frac{\rho(m)-\l+\ell}{2}}{\rho(m)}{\tanh^2 x}\,.
\end{equation}

Let $B(x,y)=\frac{\Gamma(x)\Gamma(y)}{\Gamma(x+y)}$ denote Euler's beta function.
According to \cite[2.12.(1)]{Er},
\begin{equation}
\label{eq:intergralF-Erdelyi}
\hyper{\a}{\b}{\gamma}{z}=\frac{1}{B(\b,\gamma-\b)}\,
\int_0^1 u^{\b-1} (1-u)^{\gamma-\b-1} (1-uz)^{-\a} \; du
\end{equation}
provided $\Re \gamma >\Re \b >0$ and $|\arg(1-z)|<\pi$.
Applying this formula to \eqref{eq:Fell-rank1}, we obtain
\begin{equation}
\label{Fell-rank1-3}
F_{\ell,\l}(m;x)=\frac{1}{B\big(\frac{\rho(m)+\l-\ell}{2},\frac{\rho(m)-\l+\ell}{2}\big)} \int_0^1 u^{\frac{\rho(m)+\l-\ell}{2}-1} (1-u)^{\frac{\rho(m)-\l+\ell}{2}-1} (1+u\sinh^2 x)^{-\frac{\rho(m)+\l+\ell}{2}} \; du
\end{equation}
provided $-(\rho(m)-\ell)<\Re \l<\rho(m)+\ell$. If $\ell\geq 0$, this interval contains $0$ when $\ell<\rho(m)$. Under the same conditions on $\Re\l$,  the change of variables $t=\tanh^2 x$ gives the alternative expression
\begin{equation}
\label{Fell-rank1-4}
F_{\ell,\l}(m;x)=\frac{1}{B\big(\frac{\rho(m)+\l-\ell}{2},\frac{\rho(m)-\l+\ell}{2}\big)} \int_0^{+\infty} \frac{(\sinh t)^{\rho(m)-\l-1} (\cosh t)^{\l-\rho(m)-2\ell+1}}{(\cosh^2 t+\sinh^2 x \sinh^2 t)^{\frac{\rho(m)+\l}{2}}} \; dt\,
\end{equation}

\begin{Lemma}
\label{lemma:pos-rank1}
Suppose $\ell\in \R$ satisfies $|\ell|\leq \rho(m)=\frac{m_\rms}{2}+1$. Then $F_{\ell,\l}(m) \geq 0$ for all all $\l\in \R$.
\end{Lemma}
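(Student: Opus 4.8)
The plan is to read the sign of $F_{\ell,\l}(m;x)$ off a power‑series representation obtained from \eqref{Fell-rank1-5} by one further classical transformation, after exploiting the symmetries of $F_{\ell,\l}(m)$. First I would reduce to $0\le\ell\le\rho(m)$ and $\l\ge 0$: one has $F_{\ell,\l}(m)=F_{-\ell,\l}(m)$ by \eqref{eq:Fellm-even-ell}, and $F_{\ell,\l}(m)=F_{\ell,-\l}(m)$ because $F_{\ell,\l}(m;x)$ is $W$‑invariant in $\l$ and $W=\{\pm1\}$ in the rank‑one case. Next I would apply Euler's transformation $\hyper{\a}{\b}{\gamma}{z}=(1-z)^{\gamma-\a-\b}\hyper{\gamma-\a}{\gamma-\b}{\gamma}{z}$ (see \cite[2.1.4(23)]{Er}) to \eqref{Fell-rank1-5} with $z=\tanh^2 x$, so that $1-z=(\cosh x)^{-2}$ and $\gamma-\a-\b=\l$. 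This yields
$$
F_{\ell,\l}(m;x)=(\cosh x)^{-\l-\rho(m)}\;\hyper{\tfrac{\rho(m)+\l+\ell}{2}}{\tfrac{\rho(m)+\l-\ell}{2}}{\rho(m)}{\tanh^2 x}\,.
$$

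The point is that, for $0\le\ell\le\rho(m)$ and $\l\ge 0$, all the data entering the remaining hypergeometric function are nonnegative: $\rho(m)=\tfrac{m_\rms}{2}+1>0$, the parameter $\tfrac{\rho(m)+\l+\ell}{2}>0$, and — this is exactly where the hypothesis $|\ell|\le\rho(m)$ is used — $\tfrac{\rho(m)+\l-\ell}{2}=\tfrac{(\rho(m)-\ell)+\l}{2}\ge 0$; moreover $0\le\tanh^2 x<1$. Expanding
$$
\hyper{\tfrac{\rho(m)+\l+\ell}{2}}{\tfrac{\rho(m)+\l-\ell}{2}}{\rho(m)}{\tanh^2 x}=\sum_{n\ge 0}\frac{\big(\tfrac{\rho(m)+\l+\ell}{2}\big)_n\big(\tfrac{\rho(m)+\l-\ell}{2}\big)_n}{(\rho(m))_n\,n!}\,(\tanh^2 x)^n\,,
$$
every term is $\ge 0$, since a Pochhammer symbol of a nonnegative real number is nonnegative and $(\rho(m))_n>0$. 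Hence the sum is $\ge 0$, and multiplying by $(\cosh x)^{-\l-\rho(m)}>0$ gives $F_{\ell,\l}(m;x)\ge 0$. At the boundary case $\ell=\rho(m)$, $\l=0$ the series collapses to its $n=0$ term, so no separate limiting argument is needed.

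I do not expect a genuine obstacle here; the only thing to get right is the choice of representation. The integral formula \eqref{Fell-rank1-3} is manifestly nonnegative but, with the constraint $-(\rho(m)-\ell)<\Re\l<\rho(m)+\ell$, it alone does not reach all of $\R$, while the series \eqref{eq:Fell-rank1} (argument $-\sinh^2 x<0$) and \eqref{Fell-rank1-5} (argument $\tanh^2 x>0$) have, for generic $\l$, numerator parameters whose Pochhammer symbols do not keep a fixed sign pattern. Applying Euler's transformation to \eqref{Fell-rank1-5} is precisely what turns both numerator parameters into nonnegative quantities once $\l\ge 0$ and $\ell\le\rho(m)$, after which term‑by‑term positivity of the hypergeometric series settles all $\l\in\R$ uniformly.
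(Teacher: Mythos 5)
Your argument is correct, but it takes a genuinely different route from the paper's. The paper makes the same first reduction, to $0\le\ell\le\rho(m)$ via $F_{\ell,\l}=F_{-\ell,\l}$, and then reads positivity off the Euler \emph{integral} representation \eqref{Fell-rank1-3}, whose integrand and Beta factor are manifestly positive; you instead add the reduction to $\l\ge 0$ (by $W$-invariance in $\l$) and apply Euler's transformation to \eqref{Fell-rank1-5}, arriving at $F_{\ell,\l}(m;x)=(\cosh x)^{-\l-\rho(m)}\,\hyper{\tfrac{\rho(m)+\l+\ell}{2}}{\tfrac{\rho(m)+\l-\ell}{2}}{\rho(m)}{\tanh^2 x}$ and concluding by term-by-term nonnegativity of the hypergeometric \emph{series}, which is legitimate since the argument $\tanh^2 x$ lies in $[0,1)$, both numerator parameters are $\ge 0$ exactly because $\l\ge 0$ and $\ell\le\rho(m)$, and $\rho(m)>0$. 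What your version buys: it is uniform in $\l\in\R$ and in fact yields strict positivity, since the $n=0$ term equals $1$ and all other terms are $\ge 0$; by contrast, \eqref{Fell-rank1-3} is only asserted for $-(\rho(m)-\ell)<\Re\l<\rho(m)+\ell$, so the paper's one-line deduction, read literally, covers only that strip (and its $W$-translate) and needs precisely the kind of supplementary representation you provide, or a limiting/transformation argument, to reach all real $\l$. What the paper's version buys: where it applies, the integral formula gives positivity with no series manipulation and extends verbatim to complex $\l$ in the strip, which is in the spirit of the surrounding estimates; your series argument, being elementary and global in $\l\in\R$, is arguably the cleaner proof of the lemma as stated.
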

\begin{proof}
Since $F_{\ell,\l}=F_{-\ell,\l}$, it is enough to consider the case
$0\leq \ell\leq \rho(m)$. In this case, the result follows immediately from \eqref{Fell-rank1-3}.
\end{proof}

%%%%%%%%%%%%%%%%%%%%%

\section{Basic estimates and asymptotics}
\label{section:basic}

In this section we present some basic estimates of the
$\tau_{-\ell}-$Heckman-Opdam hypergeometric functions.
For geometric multiplicity functions and $\ell\in \Z$, the functions $F_{\ell,\l}(m)$ coincide with the restriction to $A\equiv \fa$ of the
$\tau_{-\ell}-$spherical functions $\varphi_{\ell,\l}$ for the line bundle $E_\ell$ over $G/K$. Their basic estimates, stated in Corollary \ref{Cor:estimates-spherical}, are consequence of the integral formula \eqref{eq:integral-formula}, which is missing for general $F_{\ell,\l}(m)$'s.

For arbitrary multiplicities, \eqref{eq:Fell-F} and
\eqref{eq:Gell-G} allow us to reduce the study of the functions
$F_{\ell,\l}(m)$ and $G_{\ell,\l}(m)$ to that of the usual
Heckman-Opdam symmetric and non-symmetric hypergeometric functions $F_{\l}(m(\ell))$ and $G_{\l}(m(\ell))$.

Let $\mathcal{M}$ denote the set of real-valued multiplicity functions $m=(m_\rms,m_\rmm,m_\rml)$ on a root system $\Sigma$ of type $BC_r$.
We will consider the following subsets of $\mathcal{M}$:
\begin{eqnarray}
\label{eq:M+}
\mathcal{M}_+&=&\{m\in \mathcal{M}:
\text{$m_\a\geq 0$ for every $\alpha\in\Sigma$}\}\,,\\
\label{eq:M0}
\mathcal{M}_0&=&\{(m_\rms,m_\rmm,m_\rml)\in \mathcal{M}: m_\rmm\geq 0, m_\rms+m_\rml\geq 0\}\,,\\
\label{eq:M1}
\mathcal{M}_1&=&\{(m_\rms,m_\rmm,m_\rml)\in \mathcal{M}:  m_\rmm> 0, m_\rms> 0, m_\rms+2m_\rml> 0\}\,,\\
\label{eq:M2}
\mathcal{M}_2&=&\{(m_\rms,m_\rmm,m_\rml)\in \mathcal{M}: m_\rmm\geq 0, m_\rml\geq 0, m_\rms+m_\rml\geq 0\}\,,\\
\label{eq:M3}
\mathcal{M}_3&=&\{(m_\rms,m_\rmm,m_\rml)\in \mathcal{M}: m_\rmm\geq 0, m_\rml\leq 0, m_\rms +2m_\rml\geq 0\}\,.
\end{eqnarray}
So $\mathcal{M}_+$ consists of the non-negative multiplicity functions
and $\mathcal{M}_1=(\mathcal{M}_+\cup \mathcal{M}_3)^0$, the interior of
$\mathcal{M}_+\cup \mathcal{M}_3$.
For real-valued multiplicities, $\mathcal{M}_0$ is the natural set for which both hypergeometric functions $G_\lambda(m)$ and $F_\lambda(m)$ are defined for all $\lambda\in\mathfrak{a}^*_\C$; see Appendix A.
In \cite[Definition 5.5.1]{HS} the elements of $\mathcal{M}_1$ are called standard multiplicity functions. These sets of multiplicities are represented in Figure \ref{fig:m}.
\begin{center}
\begin{figure}[h]
%%%%%
\begin{tikzpicture}[
    scale=1.8,
    axis/.style={very thick, ->, >=stealth'},
    equation line/.style={thin},
    ]
% M+
   \fill[gray!20!,path fading=east, fading angle=45]
    (0,0) -- (1.8,0) -- (1.8,1.8) -- (0,1.8) -- cycle;
   \fill[gray!110!,path fading=east]
   (0,0) -- (1.8,0) -- (1.8,-.9) -- cycle;
   \fill[gray!40!,path fading=east]
   (0,0) -- (1.8,-.9) -- (1.4,-1.4) -- cycle;
   \fill[gray!40!,path fading=north]
   (0,0) -- (0,1.8) -- (-1.4,1.4) -- cycle;
% axis
    \draw[axis] (-1.4,0)  -- (2,0) node(xline)[right]
        {$m_{\mathrm{s}}$};
    \draw[axis] (0,-1.4) -- (0,2) node(yline)[above]
        {$m_{\mathrm{l}}$};
% lines
     \draw[equation line] (0,0) -- (1.8,-.9)
        node[right, text width=10em, rotate=0]
        {$m_{\mathrm{s}}+2m_{\mathrm{l}}=0$};
     \draw[equation line] (-1.4,1.4) -- (1.4,-1.4)
        node[right, text width=10em, rotate=0]
        {$m_{\mathrm{s}}+m_{\mathrm{l}}=0$};
% arrows
     \draw [->,line width=.5pt] (.8,-.4)
        arc[x radius=.9cm, y radius =.9cm,
        start angle=-22.5, end angle=88];
     \draw [->,line width=.5pt] (-.5,.5)
        arc[x radius=.7cm, y radius =.7cm,
        start angle=135, end angle=0];
% text
    \draw (1.2,1.5) node[right] {$\mathcal{M}_+$};
  	\draw (-.67,.78) node[right] {{\footnotesize$\mathcal{M}_2$}};
  	\draw (.8,.5) node[right] {{\footnotesize $\mathcal{M}_1$}};
%  	\draw (1.1,-1.1) node[right] {{\footnotesize $\mathcal{M}_0$}};
  	\draw (1.2,-.4) node[right] {$\mathcal{M}_3$};
\end{tikzpicture}
%%%%%
\caption{Sets of $BC$ multiplicities}
\label{fig:m}
\end{figure}
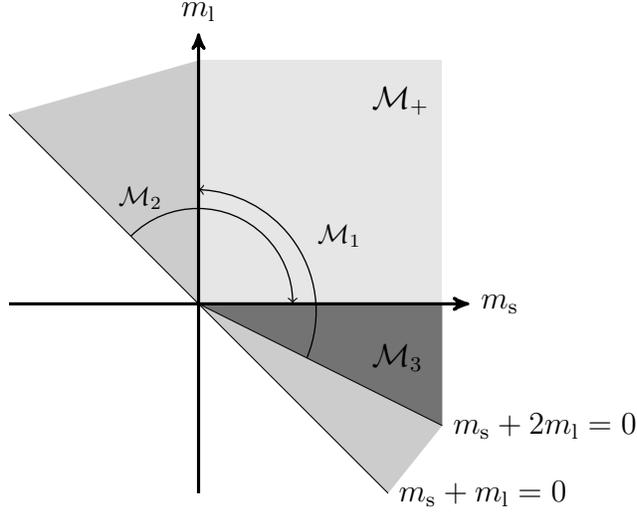
\end{center}

\subsection{Basic estimates}
\label{subsection:estimates}

Under the assumption that $m\in \mathcal{M}_+$, the positivity properties of  $F_{\l}(m)$ and $G_{\l}(m)$ for $\l\in\fa^*$ as well as their basic estimates have been proved by Schapira in \cite[\S 3.1]{Sch08}, by refining some ideas from \cite[\S 6]{OpdamActa}. Under the same assumption, Schapira's estimates for $F_\l(m)$ and $G_\l(m)$ have been sharpened by R\"osler, Koornwinder and Voit in \cite[\S 3]{RKV13}.
The following proposition collects their results.

\begin{Prop}
\label{prop:positivity}
Let $m\in \mathcal{M}_+$.
Then the following properties hold.
\begin{enumerate}
\thmlist
\item For all $\l\in\fa^*$ the functions $G_\l(m)$ and  $F_\l(m)$ are real and strictly positive.
\item For all $\l\in\fa_\C^*$
\begin{equation}
\label{eq:basic-estimate1}
\begin{split}
|G_\l(m)|&\leq G_{\Re\l}(m)\,,\\
|F_\l(m)|&\leq F_{\Re\l}(m)\,.
\end{split}
\end{equation}
\item For all $\l\in\fa^*$ and all $x\in \fa$
\begin{equation}
\begin{split}
\label{eq:basic-estimate2}
G_\l(m;x)&\leq G_0(m;x)e^{\max_w (w\l)(x)} \\
F_\l(m;x)&\leq F_0(m;x)e^{\max_w (w\l)(x)} \,.
\end{split}
\end{equation}
More generally, for all $\l\in\fa^*$, $\mu\in \overline{(\fa^*)^+}$ and all $x\in \fa$
\begin{equation}
\begin{split}
\label{eq:basic-estimate3}
G_{\l+\mu}(m;x)&\leq G_\mu(m;x)e^{\max_w (w\l)(x)} \\
F_{\l+\mu}(m;x)&\leq F_\mu(m;x)e^{\max_w  (w\l)(x)} \,.
\end{split}
\end{equation}
\end{enumerate}
(In the above estimates, $\max_w$ denotes the maximum over all $w\in W$.)
\end{Prop}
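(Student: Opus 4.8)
The plan is to deduce the three symmetric statements from their non-symmetric counterparts and to prove the latter by analysing the first-order Cherednik system along rays, following Opdam \cite[\S 6]{OpdamActa}, Schapira \cite[\S 3.1]{Sch08} and R\"osler--Koornwinder--Voit \cite[\S 3]{RKV13}. The reduction uses the averaging identity \eqref{eq:F-G}: positivity of $G_\l(m)$ on $\fa$ for $\l\in\fa^*$ forces $F_\l(m)=\frac1{|W|}\sum_{w\in W}G_\l(m;w^{-1}\cdot)>0$; the bound $|G_\l(m)|\le G_{\Re\l}(m)$ gives $|F_\l(m;x)|\le\frac1{|W|}\sum_w G_{\Re\l}(m;w^{-1}x)=F_{\Re\l}(m;x)$; and, since $\max_{w\in W}(w\l)(w'x)=\max_{w\in W}(w\l)(x)$ for every $w'\in W$, applying $G_{\l+\mu}(m;\cdot)\le G_\mu(m;\cdot)\,e^{\max_w(w\l)(x)}$ at the points $w^{-1}x$ and averaging yields the corresponding inequality for $F$. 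So it suffices to prove (a)--(c) for $G_\l(m)$.

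For this I would fix a regular point $x_0\in\fa$ and set $v_w(t)=G_\l(m;tw x_0)$ for $w\in W$, $t>0$. Since $x_0$ is regular, $Wx_0$ has $|W|$ elements, $\alpha(wx_0)\neq0$ for all $\alpha,w$, and $r_\alpha(twx_0)=t(r_\alpha w)x_0$; differentiating $v_w$ in $t$ and inserting the Cherednik equation $T_{wx_0}(m)G_\l=\l(wx_0)G_\l$ (see \eqref{eq:T}) gives the closed linear system
\[
\dot v_w=\bigl[\l(wx_0)+\rho(m;wx_0)\bigr]v_w-\sum_{\alpha\in\Sigma^+}\frac{m_\alpha\,\alpha(wx_0)}{1-e^{-2t\alpha(wx_0)}}\,\bigl(v_w-v_{r_\alpha w}\bigr)\,.
\]
The coupling coefficients $m_\alpha\,\alpha(wx_0)\bigl(1-e^{-2t\alpha(wx_0)}\bigr)^{-1}$ are nonnegative for every $t>0$, because $m_\alpha\ge0$ by hypothesis and $s/(1-e^{-2s})>0$ for all real $s\ne0$. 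Thus the system is cooperative (of Metzler type), and since $v_w(0^+)=G_\l(m;0)=1>0$ for all $w$, a comparison argument for cooperative systems gives $v_w(t)>0$ for all $t>0$; letting $x_0$ range over the open Weyl chamber and using continuity of $G_\l(m;\cdot)$ we obtain (a), in particular $G_0(m)>0$.

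Estimates (b) and (c) I would obtain by comparing with the same system. For (b), with $\l\in\fa^*_\C$, set $g_w(t)=|v_w(t)|$; using $|v_w|'=\Re\bigl(\dot v_w\,\overline{v_w}/|v_w|\bigr)$, the system above, the inequality $\Re\bigl(v_{r_\alpha w}\overline{v_w}/|v_w|\bigr)\le|v_{r_\alpha w}|=g_{r_\alpha w}$, and the nonnegativity of the coupling coefficients, one checks that $(g_w)_w$ is a subsolution of the cooperative system satisfied by $\bigl(G_{\Re\l}(m;twx_0)\bigr)_w$; both tend to $(1,\dots,1)$ as $t\to0^+$, so comparison gives $|G_\l(m)|\le G_{\Re\l}(m)$. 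For (c), let $\mu\in\overline{(\fa^*)^+}$, $\l\in\fa^*$, and $c=\max_{w\in W}(w\l)(x_0)$; substituting $h_w(t)=G_\mu(m;twx_0)\,e^{ct}$ into the $(\l+\mu)$-version of the ray system shows that $h_w$ solves it with the extra term $\bigl(c-\l(wx_0)\bigr)h_w$ on the right-hand side, which is nonnegative since $\l(wx_0)=(w^{-1}\l)(x_0)\le c$ and $h_w>0$. Hence $(h_w)_w$ is a supersolution of the cooperative system satisfied by $\bigl(G_{\l+\mu}(m;twx_0)\bigr)_w$; as both tend to $(1,\dots,1)$ at $t\to0^+$, comparison gives $G_{\l+\mu}(m;twx_0)\le G_\mu(m;twx_0)e^{ct}$, and evaluating at $w=\id$, $t=1$ (and passing to arbitrary $x$ by continuity) gives the general inequality in (c), the case $\mu=0$ being the first one. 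The symmetric statements then follow from the reduction above.

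The main obstacle is the singular endpoint $t=0$ of the ray system: there the coupling coefficients blow up like $1/t$ while the differences $v_w-v_{r_\alpha w}$ vanish like $t$, so although each product remains bounded and $v_w(t)\to1$, the coefficient matrix itself does not extend continuously to $t=0$. The comparison arguments must therefore be run on $(0,\infty)$, using that the competing vectors coincide only in the limit $t\to0^+$; making this rigorous --- a Gr\"onwall-type estimate adapted to the singular cooperative system, together with a continuity/density argument to pass from the open chamber to all of $\fa$ --- is the technical heart of \cite{OpdamActa,Sch08}, and the sharp form of (b)--(c) is the refinement carried out in \cite{RKV13}.
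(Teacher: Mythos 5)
Your route is genuinely different from the one this proposition rests on. In the paper the statement is not proved at all: it is a compilation of known results, with the positivity and the estimates \eqref{eq:basic-estimate1}--\eqref{eq:basic-estimate2} taken from Schapira \cite[\S 3.1]{Sch08} (refining Opdam \cite[\S 6]{OpdamActa}) and \eqref{eq:basic-estimate3} from \cite[Theorem 3.3]{RKV13}; those arguments are only re-run later, for $\mathcal{M}_3$, in Proposition \ref{prop:positivity-estimates-M3}. Schapira argues on $\fa$ via a zero of minimal norm (regular and singular points treated separately) and via differential inequalities for $\partial_\xi|Q_\l|^2$ and $\partial_\xi R_\l$, while \cite{RKV13} get \eqref{eq:basic-estimate3} from a Phragm\'en--Lindel\"of argument combined with Opdam's bound $|G_\l(m;x)|\le\sqrt{|W|}e^{\max_w\Re(w\l)(x)}$. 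You instead recast the Cherednik equations as a $|W|$-dimensional cooperative (Metzler) linear ODE system along the rays $t\mapsto twx_0$ and use comparison; the mechanism you exploit, nonnegativity of the couplings $m_\a\a(wx_0)\big(1-e^{-2t\a(wx_0)}\big)^{-1}$ for $m\in\mathcal{M}_+$, is exactly the positivity that powers Schapira's proof and the paper's $\mathcal{M}_3$ extension, and your reduction of the symmetric statements to the nonsymmetric ones via \eqref{eq:F-G} is correct. Your schemes for (b) and (c) are workable, and the singular endpoint $t=0$ is less of an obstacle than you suggest: since the system is linear, compare with $(1+\e)$ times the supersolution starting at a small $t_0>0$ where both vectors are close to the all-ones vector, and then let $\e\to0$. (As a byproduct, your argument for (c) appears to use only $G_\mu>0$ and $c\ge\l(wx_0)$, not dominance of $\mu$, so it would give \eqref{eq:basic-estimate3} for every $\mu\in\fa^*$.)

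Two points need repair. First, your claim that (a) follows at singular points ``by continuity'' fails: continuity only yields $G_\l\ge0$ and $F_\l\ge0$ on the walls, whereas the proposition asserts strict positivity on all of $\fa$; this is precisely why Schapira, and the paper in Proposition \ref{prop:positivity-estimates-M3}, treat the singular case separately. The fix stays inside your framework: run the same ray system with $x_0$ singular and $\xi=wx_0$; every term with $\a(wx_0)=0$ vanishes identically (both $\a(\xi)=0$ and $v_{r_\a w}\equiv v_w$, since $r_\a wx_0=wx_0$), so the system still closes with nonnegative couplings and the same argument gives strict positivity along singular rays. You should also record, before speaking of signs, that $G_\l(m)$ is real for $\l\in\fa^*$ (by uniqueness of the normalized solution of a system with real coefficients). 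Second, you cannot defer the ``technical heart'' of the singular comparison to \cite{OpdamActa,Sch08,RKV13}: none of those papers runs this ray comparison, so the comparison lemma for the singular cooperative system is yours to prove; with the $\e$-scaling device above it is a short exercise, and once it and the wall case are supplied your argument is a complete, self-contained alternative to the cited proofs.
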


Below we will see that all these estimates extend to $m\in \mathcal{M}_3$. For this, we will need the following lemma.

\begin{Lemma}
\label{lemma:pos-coeffs}
Let $m=(m_\rms,m_\rmm,m_\rml)\in \mathcal{M}_0$ and $\beta\in \Sigma_{\rml}^+$. Then  the following inequalities hold for all $x\in \mathfrak{a}$:
\begin{enumerate}
\thmlist
\item
$\displaystyle{\frac{m_\rms}{2}+m_\rml \, \frac{1}{1+e^{-\beta(x)}}\geq 0}$\quad  if $m\in \mathcal{M_+}\cup \mathcal{M}_3$;
\smallskip

\item
$\displaystyle{\frac{m_\rms}{2}+m_\rml \, \frac{1+e^{-2\beta(x)}}{(1+e^{-\beta(x)})^2}\geq 0}$\quad  if $m\in \mathcal{M}_2\cup \mathcal{M}_3$.
\end{enumerate}
\end{Lemma}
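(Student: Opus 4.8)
The plan is to reduce both inequalities to one‑variable statements via the substitution $t=e^{-\beta(x)}$. Since $\beta\in\Sigma_{\rml}^+$ is nonzero, $\beta(x)$ runs over all of $\R$ as $x$ runs over $\fa$, so $t$ runs over $(0,+\infty)$; the quantities $\frac{1}{1+e^{-\beta(x)}}$ and $\frac{1+e^{-2\beta(x)}}{(1+e^{-\beta(x)})^2}$ then become $\frac{1}{1+t}$ and $h(t):=\frac{1+t^2}{(1+t)^2}$. The whole argument is a matter of bounding these two rational functions on $(0,+\infty)$ and combining the bounds with the sign conditions that define the relevant sets of multiplicities.

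For part (a) I would use only that $\frac{1}{1+t}\in(0,1)$ for $t>0$. If $m\in\mathcal{M}_+$ both summands in $\frac{m_\rms}{2}+\frac{m_\rml}{1+t}$ are $\geq 0$. If $m\in\mathcal{M}_3$, then $m_\rml\leq 0$, so multiplying $m_\rml$ by a number in $(0,1)$ can only increase it, giving $\frac{m_\rml}{1+t}\geq m_\rml$ and hence $\frac{m_\rms}{2}+\frac{m_\rml}{1+t}\geq\frac{m_\rms}{2}+m_\rml=\tfrac12(m_\rms+2m_\rml)\geq0$ by the defining inequality of $\mathcal{M}_3$. For part (b) the one extra ingredient is the elementary bound $\tfrac12\leq h(t)<1$ for $t>0$: writing $h(t)=1-\frac{2t}{(1+t)^2}$ and using $(1+t)^2\geq 4t$ (AM--GM, equality at $t=1$), one gets $0<\frac{2t}{(1+t)^2}\leq\tfrac12$. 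With this, if $m\in\mathcal{M}_2$ then $m_\rml\geq0$ and $h(t)\geq\tfrac12$ yield $\frac{m_\rms}{2}+m_\rml h(t)\geq\tfrac12(m_\rms+m_\rml)\geq0$, while if $m\in\mathcal{M}_3$ then $m_\rml\leq0$ and $h(t)<1$ yield $\frac{m_\rms}{2}+m_\rml h(t)\geq\tfrac12(m_\rms+2m_\rml)\geq0$.

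I do not expect a genuine obstacle here: the only step that is not pure bookkeeping with signs is identifying the range $[\tfrac12,1)$ of $h$, and that is immediate from $(1+t)^2\geq 4t$. The one point deserving a moment's care is that the endpoints $t\to0^+$ and $t\to+\infty$ are not attained, so the relevant infima/suprema of the rational functions are not reached; but since the desired inequalities are non‑strict and the limiting values of the two expressions at the ``worst'' endpoint are exactly $\tfrac12(m_\rms+2m_\rml)$, respectively $\tfrac12(m_\rms+m_\rml)$ --- which are $\geq 0$ by assumption --- nothing is lost.
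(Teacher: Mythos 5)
Your proof is correct and follows essentially the same route as the paper: both arguments reduce to bounding the two rational factors on $(0,+\infty)$ (the first lies in $(0,1)$, the second in $[\tfrac12,1)$) and combining these bounds with the sign conditions defining $\mathcal{M}_+$, $\mathcal{M}_2$, $\mathcal{M}_3$; the paper merely treats $\mathcal{M}_+\cup\mathcal{M}_3$ in one stroke via $\frac{m_\rms}{2}+Cm_\rml\geq(1-C)\frac{m_\rms}{2}$ for $C\in[0,1]$ and gets the $\geq\tfrac12$ bound from $\frac{1+|z|^2}{|1+z|^2}\geq\tfrac12$ instead of your AM--GM inequality $(1+t)^2\geq 4t$. (Only a cosmetic remark: the infimum $\tfrac12$ of $\frac{1+t^2}{(1+t)^2}$ is in fact attained at $t=1$, but since all your inequalities are non-strict this changes nothing.)
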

\begin{proof}
If $m\in \mathcal{M_+}\cup \mathcal{M}_3$, then $m_\rms\geq 0$ and $m_\rml\geq -\frac{1}{2}m_\rms$. Hence, for every $0\leq C\leq 1$, we have
$\frac{m_\rms}{2}+C m_\rml  \geq (1-C)\frac{m_\rms}{2} \geq 0$.
We therefore obtain both (a) and (b) for these $m$'s by observing that for every $t=-\b(x)\in \R$ we have
$$0\leq \frac{1}{1+e^t}\leq 1 \qquad \text{and} \qquad 0\leq\frac{1+e^{2t}}{(1+e^t)^2}\leq 1\,.$$

Suppose now that $m\in \mathcal{M}_2$. Since $m_\rml\geq 0$, $m_\rms+m_\rml\geq 0$ and $\frac{1+|z|^2}{|1+z|^2}\geq \frac{1}{2}$ for all $z\in \C$, we immediately
obtain that for all $t=-\beta(x)\in \R$ we have
$$\frac{m_\rms}{2}+m_\rml \, \frac{1+e^{2t}}{(1+e^t)^2} \geq \frac{1}{2}(m_\rms+m_\rml) \geq 0\,.$$
This proves the lemma.
\end{proof}

\begin{Rem}
Since
$$
\lim_{t\to+\infty} \Big(\frac{m_\rms}{2}+m_\rml \frac{1}{1+e^t}\Big)=\frac{m_\rms}{2}
\quad \text{and} \quad
\left.\Big( \frac{1}{2} m_\rms+m_\rml\frac{1+e^{2t}}{(1+e^t)^2} \Big)\right|_{t=0}=\frac{1}{2}m_\rms+m_\rml\,,$$
it is clear that the inequality (a) of Lemma \ref{lemma:pos-coeffs} cannot extend to $m\in
\mathcal{M}_0 \setminus \big(\mathcal{M}_+\cup \mathcal{M}_3\big)$ and (b) cannot extend to $m\in
\mathcal{M}_0 \setminus \big(\mathcal{M}_2\cup \mathcal{M}_3\big)$.
\end{Rem}

To use the methods from \cite[\S 3]{RKV13}, we will
also need an extension of the real case of  Opdam's estimates, \cite[Proposition 6.1 (1)]{OpdamActa}. We will do this for $m\in \mathcal{M}_2\cup \mathcal{M}_3$.
Notice that the complex variable version of Opdam's estimates, \cite[Proposition 6.1 (2)]{OpdamActa}, has been recently extended by Ho and \'Olafsson \cite[Appendix A]{HoOl14} to $m\in \mathcal{M}_2$ and to a domain in $\mathfrak{a}_\C$ which is much larger than the original one considered by Opdam. For $m\in \mathcal{M}_2$  the inequality (b) of Lemma \ref{lemma:pos-coeffs} was noticed in the proof of \cite[Proposition 5.A, p. 25]{HoOl14}.

\begin{Lemma}
\label{lemma:OpdamM2M3}
Suppose that $m\in \mathcal{M}_2 \cup \mathcal{M}_3$.
Then for all $\l\in\fa_\C^*$ and $x\in \fa$
\begin{equation}
\label{eq:basic-estimate4}
\begin{split}
|G_\l(m;x)|&\leq \sqrt{|W|} e^{\max_w \Re(w\l)(x)}\,,\\
|F_\l(m;x)|&\leq \sqrt{|W|} e^{\max_w \Re(w\l)(x)}\,.
\end{split}
\end{equation}
\end{Lemma}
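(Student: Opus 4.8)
The plan is to follow the proof of the real case of Opdam's estimates, \cite[Proposition 6.1 (1)]{OpdamActa}, the only new ingredient being Lemma \ref{lemma:pos-coeffs}(b), which restores the positivity of a certain combination of coefficients when $m_\rml$ (or $m_\rms$) is allowed to be negative. First I would observe that $\mathcal{M}_2\cup\mathcal{M}_3\subset\mathcal{M}_0$, so $G_\l(m)$ and $F_\l(m)$ are well defined and analytic in $x\in\fa$; moreover, since the Cherednik operators $T_\xi(m)$ ($\xi\in\fa$) have real coefficients on $\fa$, one has $\overline{G_\l(m;x)}=G_{\bar\l}(m;x)$ for $x\in\fa$. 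By \eqref{eq:F-G} and the Cauchy--Schwarz inequality, $|F_\l(m;x)|^2\leq |W|^{-1}\sum_{w\in W}|G_\l(m;w^{-1}x)|^2$, so it suffices to estimate the $W$-invariant function $\psi(x):=\sum_{w\in W}|G_\l(m;w^{-1}x)|^2$; the goal is to prove $\psi(x)\leq |W|\,e^{2\mu(x)}$ with $\mu(x):=\max_{w\in W}(w\Re\l)(x)$.

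Next I would fix $x\in\fa$, set $g_w(t):=G_\l(m;tw^{-1}x)$, and study the smooth function $t\mapsto\psi(tx)=\sum_{w}|g_w(t)|^2$ on $[0,1]$, with $\psi(0)=|W|$. Writing the defining system $T_\xi(m)G_\l=\l(\xi)G_\l$ in the form
\begin{equation*}
\partial_\xi G_\l(m;y)=\big(\l(\xi)+\rho(m;\xi)\big)G_\l(m;y)-\sum_{\a\in\Sigma^+}m_\a\,\a(\xi)\,\frac{G_\l(m;y)-G_\l(m;r_\a y)}{1-e^{-2\a(y)}}\,,
\end{equation*}
differentiating $\psi(tx)$ along the ray, and reorganising the resulting double sum by the involution $(w,\a)\mapsto(wr_\a,\a)$ of $W\times\Sigma^+$ — which fixes $|g_w-g_{wr_\a}|^2$ and changes the sign of $(w\a)(x)$ — one obtains, after the $\rho(m)$-terms cancel, the identity
\begin{equation*}
\psi'(t)=2\sum_{w\in W}(w\Re\l)(x)\,|g_w(t)|^2-\frac12\sum_{w\in W}\sum_{\a\in\Sigma^+}m_\a\,(w\a)(x)\coth\!\big(t\,(w\a)(x)\big)\,\big|g_w(t)-g_{wr_\a}(t)\big|^2\,,
\end{equation*}
where $c\coth(tc)$ is read as $1/t$ at $c=0$. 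The hard part is to show that the second sum is nonnegative for $m\in\mathcal{M}_2\cup\mathcal{M}_3$.

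For $t>0$ one has $c\coth(tc)\geq 0$, so the summands with $\a\in\Sigma^+_\rmm$ are nonnegative because $m_\rmm\geq 0$. For the short and long roots one uses $r_{\beta_j/2}=r_{\beta_j}$: for fixed $w$ and $j$, grouping the terms with $\a=\beta_j/2$ and $\a=\beta_j$ produces $|g_w-g_{wr_{\beta_j}}|^2$ multiplied by
\begin{equation*}
\frac{m_\rms}{2}\,c\coth\!\Big(\frac{tc}{2}\Big)+m_\rml\,c\coth(tc)=c\coth\!\Big(\frac{tc}{2}\Big)\left(\frac{m_\rms}{2}+m_\rml\,\frac{1+e^{-2tc}}{(1+e^{-tc})^2}\right),\qquad c:=(w\beta_j)(x)\,,
\end{equation*}
which is $\geq 0$ since $c\coth(tc/2)\geq 0$ and, by Lemma \ref{lemma:pos-coeffs}(b) applied with $\b(x)$ equal to the value $tc$ taken by the long root $w\beta_j$ on a point of $\fa$, the bracket on the right is nonnegative. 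Hence $\psi'(t)\leq 2\mu(x)\psi(t)$ for $t\in(0,1]$, so $t\mapsto e^{-2\mu(x)t}\psi(t)$ is nonincreasing on $[0,1]$; comparing its values at $t=0$ and $t=1$ yields $\psi(x)\leq |W|e^{2\mu(x)}$. Therefore $|G_\l(m;x)|^2\leq \psi(x)\leq |W|e^{2\mu(x)}$ and, via the Cauchy--Schwarz bound above, $|F_\l(m;x)|\leq e^{\mu(x)}\leq\sqrt{|W|}\,e^{\mu(x)}$, which is the claim. The genuinely delicate point is precisely the nonnegativity of the reflection part of $\psi'$, and this is exactly where the hypothesis $m\in\mathcal{M}_2\cup\mathcal{M}_3$ is needed, through Lemma \ref{lemma:pos-coeffs}(b); everything else is the bookkeeping of the $W$-action in Opdam's argument.
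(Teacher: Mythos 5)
Your proof is correct and follows essentially the same route as the paper: both adapt Opdam's argument for \cite[Proposition 6.1 (1)]{OpdamActa}, with the only new ingredient being the regrouping of the $\beta/2$ and $\beta$ contributions for $\beta\in\Sigma^+_\rml$ into the factor $\frac{m_\rms}{2}+m_\rml\,\frac{1+e^{-2\beta(x)}}{(1+e^{-\beta(x)})^2}$, whose nonnegativity on $\mathcal{M}_2\cup\mathcal{M}_3$ is exactly Lemma \ref{lemma:pos-coeffs}(b). The only difference is presentational: you write out Opdam's monotonicity step explicitly (derivative of $\sum_w|G_\l(m;tw^{-1}x)|^2$ along a ray plus a Gronwall comparison), whereas the paper simply verifies the positivity of the relevant coefficients and cites the rest of Opdam's proof.
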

\begin{proof}
Regrouping the terms corresponding to
$\b$ and $\b/2$ and setting $z=x\in \mathfrak{a}$, we can
rewrite the first term on the right-hand side of the first displayed equation in \cite[p. 101]{OpdamActa} as
\begin{eqnarray*}
&&\sum_{w\in W, \a\in \Sigma_\rmm^+}
m_\a \frac{\a(\xi)(1-e^{-4\a(x)})}{(1-e^{-2\a(x)})^2} |\phi_w-\phi_{r_\a w}|^2 e^{-2\mu(x)}+\\
&&\quad \sum_{w\in W, \beta\in \Sigma_\rml^+}
\Big(m_{\beta/2} \frac{\frac{\b}{2}(\xi)(1-e^{-2\beta(x)})}{(1-e^{-\beta(x)})^2} +
m_{\beta} \frac{\beta(\xi)(1-e^{-4\beta(x)})}{|1-e^{-2\beta(x)})^2} \Big) |\phi_w-\phi_{r_\beta w}|^2 e^{-2\mu(x)}
\end{eqnarray*}
in which
$\alpha(\xi) (1-e^{-4\a(x)})$ is positive for all $\a\in \Sigma^+$ when $\xi\in \mathfrak{a}$ is chosen in the same Weyl chamber of $x$.
The coefficient of $ |\phi_w-\phi_{r_\beta w}|^2 e^{-2\mu(x)}$ for $\beta\in \Sigma_\rmm^+$ is then clearly positive. For $\beta\in \Sigma_\rml^+$, this coefficient is equal to
$$
\frac{\beta(\xi)(1-e^{-2\beta(x)})}{(1-e^{-\beta(x)})^2}
\Big(\frac{m_\rms}{2} +m_\rml
\frac{1+e^{-2\beta(x)}}{(1+e^{-\beta(x)})^2} \Big)\,,$$
which is positive by Lemma \ref{lemma:pos-coeffs},(b).
Therefore, the same proof as in \cite[Proposition 6.1]{OpdamActa} allows us to obtain the required inequalities.
\end{proof}

The following proposition extends the positivity properties and basic estimates from \cite[\S 3.1]{Sch08} and \cite[\S 3]{RKV13} to the multiplicity functions in $\mathcal{M}_3$.

\begin{Prop}
\label{prop:positivity-estimates-M3}
Proposition \ref{prop:positivity} holds for $m\in \mathcal{M}_3$.
\end{Prop}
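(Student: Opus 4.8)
The plan is to go through the proofs of Proposition~\ref{prop:positivity} --- due to Schapira \cite[\S 3.1]{Sch08} for the positivity (a) and the estimates (b) and \eqref{eq:basic-estimate3}, and to R\"osler--Koornwinder--Voit \cite[\S 3]{RKV13} for the sharp form --- and to check that the hypothesis $m\in\mathcal M_+$ is used \emph{only} through a short list of sign conditions, each of which survives on $\mathcal M_3$.

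I would first record the elementary facts about $\mathcal M_3$ that make this feasible: if $m=(m_\rms,m_\rmm,m_\rml)\in\mathcal M_3$ then $m_\rmm\ge 0$ and, since $m_\rms\ge -2m_\rml\ge 0$, also $m_\rms\ge 0$, so only $m_\rml$ can be negative; moreover $\tfrac{m_\rms}{2}+m_\rml\ge 0$, so by \eqref{eq:rho} one checks that $\rho(m)\in\overline{(\fa^*)^+}$ still holds. In the operators entering the arguments of \cite{Sch08,RKV13} --- the Cherednik operators \eqref{eq:T}, the operator $L(m)$ of \eqref{eq:Laplm} and the Heckman--Opdam Laplacian \eqref{eq:HOLaplacian} --- every term indexed by a root of $\Sigma^+_\rmm$ keeps the nonnegative coefficient $m_\rmm$ exactly as in the $\mathcal M_+$ case, while the terms indexed by $\Sigma^+_\rms$ and $\Sigma^+_\rml$ must be grouped in pairs $(\beta/2,\beta)$ with $\beta\in\Sigma^+_\rml$. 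Using $r_{\beta/2}=r_\beta$, the resulting coefficients are, up to the usual positive trigonometric factors, precisely $\tfrac{m_\rms}{2}+m_\rml(1+e^{-\beta})^{-1}$ (in the reflection part of \eqref{eq:T}) and $\tfrac{m_\rms}{2}+m_\rml\tfrac{1+e^{-2\beta}}{(1+e^{-\beta})^2}$ (in the first-order drift and in the $\sinh^{-2}$ part), which are nonnegative on $\mathcal M_3$ by Lemma~\ref{lemma:pos-coeffs}(a) and (b), respectively.

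With this dictionary in place, the positivity statement (a) and the estimates $|G_\l(m)|\le G_{\Re\l}(m)$, $|F_\l(m)|\le F_{\Re\l}(m)$ and $F_{\l+\mu}(m;x)\le F_\mu(m;x)e^{\max_w(w\l)(x)}$ (with their $G$-analogues) should follow from the same differential(-reflection) equations and comparison/maximum-principle arguments as in \cite[\S 3.1]{Sch08}: the only input about $m$ used there is the nonnegativity of the regrouped coefficients just described, and the symmetric estimates in any case reduce to the nonsymmetric ones via \eqref{eq:F-G}. For the sharp form of these estimates obtained in \cite[\S 3]{RKV13}, the one extra ingredient needed as a starting point is Opdam's bound $|G_\l(m;x)|\le\sqrt{|W|}\,e^{\max_w\Re(w\l)(x)}$, which on $\mathcal M_3$ is already available, being Lemma~\ref{lemma:OpdamM2M3}; the remaining steps of the RKV construction (the production of the representing measure and the inequalities it yields) again use only the positivity of $\tfrac{m_\rms}{2}+m_\rml\tfrac{1+e^{-2\beta}}{(1+e^{-\beta})^2}$ from Lemma~\ref{lemma:pos-coeffs}(b), so they go through verbatim.

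The main obstacle is thus bookkeeping rather than conceptual: one has to comb through \cite{Sch08} and \cite{RKV13} and verify that $m\in\mathcal M_+$ is invoked nowhere except in the guises covered by Lemmas~\ref{lemma:pos-coeffs} and~\ref{lemma:OpdamM2M3}. The two delicate points I expect are the non-vanishing of the nonsymmetric function $G_\l(m;\cdot)$, where one must make sure the pairing of $\beta/2$ with $\beta$ is compatible with the reflection terms $1-r_\alpha$, and the estimate $|G_\l|\le G_{\Re\l}$, which rests on the nonnegativity of the coefficients occurring in the Harish-Chandra type expansion of $G_\l(m)$ and therefore requires the same regrouping to be carried out at the level of the recursion defining those coefficients. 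Once these checks are done, Proposition~\ref{prop:positivity} holds word for word for $m\in\mathcal M_3$.
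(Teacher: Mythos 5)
Your proposal follows essentially the same route as the paper's proof: rework Schapira's arguments for (a), (b) and \eqref{eq:basic-estimate2} by grouping the $\beta/2$ and $\beta$ terms so that the only sign conditions needed are those supplied by Lemma \ref{lemma:pos-coeffs}, and obtain \eqref{eq:basic-estimate3} by running the R\"osler--Koornwinder--Voit argument with the Opdam-type bound of Lemma \ref{lemma:OpdamM2M3} as the extra input. The checks you flag as delicate are exactly the ones the paper carries out, so the plan is correct as stated.
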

\begin{proof}
The proofs of (a), (b) and \eqref{eq:basic-estimate2} follow the same steps as the proofs of \cite[Lemma 3.1 and Proposition
3.1]{Sch08}. So we shall just indicate what has to be
modified in the proofs when considering $m\in \mathcal{M}_3$ instead of $m\in \mathcal{M}_+$.

Suppose $G_\l(m)$ is not positive and let $x\in \fa$ be a zero of $G_\l(m)$ of minimal norm.
As in \cite[Lemma 3.1]{Sch08}, one has to distinguish whether $x$ is regular or singular. If $x$ is regular, take $\xi$ in the same chamber of $x$.
Evaluation at $x$ of the equation
\begin{equation}
\label{eq:hyperGlambda}
T_\xi(m) G_\l(m)=\l(\xi) G_\l(m)
\end{equation}
yields
\begin{multline*}
\partial_\xi G_\l(m;x)=\sum_{\a\in \Sigma^+} m_\alpha \frac{\a(\xi)}{1-e^{-2\a(x)}}
\big[G_\l(m;r_\a x)-G_\l(m;x)\big]
+\big(\rho(m)+\l\big)(\xi) G_\l(m;x)
\end{multline*}
in which $G_\lambda(m;x)$ vanishes.
In the sum over $\Sigma^+$, the coefficient of
$G_\l(m;r_\a x)-G_\l(m;x)$ is always non-negative for
$\a\in \Sigma_\rmm^+$. Moreover, grouping together
those corresponding to $\b/2$ and $\b\in \Sigma_\rml^+$, we obtain
as coefficient of
$G_\l(m;r_\b x)-G_\l(m;x)$
\begin{equation}
\label{eq:positivity-beta}
m_{\beta/2} \frac{\frac{\beta}{2}(\xi)}{1-e^{-\beta(x)}}
+m_{\beta} \frac{\beta(\xi)}{1-e^{-2\beta(x)}}
=\frac{\b(\xi)}{1-e^{-\b(x)}}\Big[ \frac{m_\rms}{2} +
 m_\rml \frac{1}{1+e^{-\b(x)}}\Big],
\end{equation}
which is positive by Lemma \ref{lemma:pos-coeffs},(a).

If $x$ is singular, let $I=\{\a \in \Sigma^+: \a(x)=0\}$ and let $\xi$ in the same face of $x$ so that $\alpha(\xi)=0$ for all $\alpha\in I$. In this case,
\eqref{eq:hyperGlambda} evaluated at $x$ gives
\begin{multline*}
\partial_\xi G_\l(m;x)=-\sum_{\a\in I} m_\a \frac{\a(\xi)}{\inner{\a}{\a}}\partial_{x_\a}  G_\l(m;x)\\
+\sum_{\alpha\in \Sigma_{\rmm}^+\setminus I}
m_{\rmm}  \frac{\a(\xi)}{1-e^{-2\a(x)}}
\big[G_\l(m;r_\a x)-G_\l(m;x)\big]\\
+\sum_{\b\in\Sigma_{\rml}^+ \setminus I}
\Big[m_{\beta/2} \frac{\frac{\beta}{2}(\xi)}{1-e^{-\beta(x)}}
+m_{\beta} \frac{\beta(\xi)}{1-e^{-2\beta(x)}}\Big]
\big[G_\l(m;r_\beta x)-G_\l(m;x)\big]\\
+\big(\rho(m)+\l\big)(\xi) G_\l(m;x)\,,
\end{multline*}
in which the first sum and $G_\l(m;x)$ vanish, and
the sum over $\Sigma^+_\rml\setminus I$ as coefficient as in \eqref{eq:positivity-beta}.

In both cases, one can argue as in
\cite[Lemma 3.1]{Sch08} by replacing the multiplicities $k_\a\geq 0$ in that proof with
$\frac{m_\rms}{2} +
 m_\rml\, \frac{1}{1+e^{-\b(x)}}$ and using Lemma \ref{lemma:pos-coeffs},(a).

Having that $G_{\Re \l}(m)$ is real and positive, to prove (b) for $G_\l(m)$, one can make the same grouping and substitution of multiplicities, as done above for $\partial_\xi G_\l(m;x)$, inside the formula for $\partial_\xi |Q_\l|^2(x)$ appearing in the proof of \cite[Proposition 3.1 (a)]{Sch08}. A third application of the same grouping in the formula of $\partial_\xi R_\l(x)$ in the proof of \cite[Proposition 3.1 (b)]{Sch08} yields \eqref{eq:basic-estimate2}.

Finally, \eqref{eq:basic-estimate3} has been proven for
multiplicities $m_\a\geq 0$ in \cite[Theorem 3.3]{RKV13} using the
original versions of (a), (b), (c) and  \eqref{eq:basic-estimate2}
together with a clever application of the Phragm\'en-Lindel\"of
principle that does not involve the root multiplicities. With Opdam's estimate for $m\in \mathcal{M}_3$, as in Lemma \ref{lemma:OpdamM2M3}, the original proof from \cite{RKV13}
extends to the case of $m\in \mathcal{M}_3$ and yields \eqref{eq:basic-estimate3}.
\end{proof}

%\begin{Rem} \textbf{(Probably not to keep in this paper..)}
%Even if we are not going to use it in this paper, for the sake of
%completeness, we point out the following uniform estimates for
%symmetric and non-symmetric Heckman-Opdam hypergeometric
%functions. Let $0<\epsilon<\pi$ be fixed and let
%$\mathsf{\Omega_{\pi-\epsilon}}=\{iy\in i\fa:
%\text{$|\a(y)|<\pi-\epsilon$ for all $\a\in \Sigma$}\}$. Let $m$
%be a multiplicity function on $\Sigma$ satisfying
%\eqref{eq:condition-on-m}. Then $|m|_\a=|m_\a|$ for all $\a\in
%\Sigma$ defines a positive multiplicity function on $\Sigma$ such
%that $\rho(|m|)=\frac{1}{2} \sum_{\a\in\Sigma^+} |m_\a| \a$. In
%this setting, there is a constant $C=C_\epsilon>0$ so that for all
%$\l\in\fa_\C^*$ and $z=x+iy\in
%\fa+\mathsf{\Omega_{\pi-\epsilon}}$:
%\begin{equation}
%\begin{split}
%|G_\l(m;z)|&\leq \sqrt{|W|} e^{-\min_{w\in W} \Im(w\l(y)) + C \max_{w\in W}w\rho(|m|)(y)+\max_{w\in W}\Re(w\l(x))}\\
%|F_\l(m;z)|&\leq \sqrt{|W|} e^{-\min_{w\in W} \Im(w\l(y)) + C \max_{w\in W}w\rho(|m|)(y)+\max_{w\in W}\Re(w\l(x))}\\
%\end{split}
%\end{equation} These estimates are established in
%\cite[Proposition A.5]{HoOl14} and improve \cite[Proposition
%6.1]{OpdamActa}, where they have been initially proved for $x\in
%\fa+\mathsf{\Omega_{\pi/2}}$ and under the assumption that
%$m_\a\geq 0$ for all $\a\in \Sigma$.

\subsection{Asymptotics}
\label{subsection:asymptotics}

We now investigate the asymptotic behavior of
the hypergeometric functions $F_\l(m)$ for $m\in \mathcal{M}_{1}$.
For this we follow \cite{NPP}. Before we state our results, it is useful to recall
the methods adopted in \cite{NPP}. Let $m$ be an arbitrary
non-negative multiplicity function. One of the main results in
\cite{NPP} is Theorem 2.11, where a series expansion away from the
walls was obtained for $F_\lambda(m) $ for all $\lambda \in
\frakacs$ (even when $\lambda$ is non-generic). Recall from \cite[\S 4.2]{HS} (or
from \cite[\S 1.2]{NPP} in symmetric space notation) that, for
generic $\lambda$, the function $F_\lambda(m)$ is given on $\fa^+$ by
$\sum_{w \in W} c(m;w\lambda) \Phi_{w\lambda}(m)$, where
$c(m;\lambda)$ is Harish-Chandra's $c$-function (see \eqref{eq:c} in the appendix)  and
$\Phi_\lambda(m;x)$ admits the series expansion
$$\Phi_\lambda(m;x) = e^{(\lambda-\rho(m))(x)} \sum_{\mu \in 2\Lambda}
\Gamma_\mu(m;\lambda) e^{-\mu(x)} \qquad (x\in \fa^+).$$
The coefficients
$\Gamma_\mu(m,\lambda)$ are determined from the recursion relations
$$ \inner{\mu}{\mu-2\lambda} \Gamma_\mu(m,\lambda)= 2 \sum_{\alpha \in
\Sigma^+} m_\alpha \sum_{n \in \mathbb N, \mu-2n\alpha \in
\Lambda} \Gamma_{\mu-2n\alpha}(m,\lambda) \inner
{\mu+\rho(m)-2n\alpha-\lambda}{\alpha} ,$$ with the initial
condition that $\Gamma_0(m)= 1$.
The above defines
$\Gamma_\mu(m;\lambda)$ as meromorphic functions on $\frakacs.$

For a non-generic point $\lambda = \lambda_0$ a series expansion for
$F_\lambda(m)$  was obtained in \cite{NPP} following the steps given
below:

\medskip
\textit{Step I:} A listing of the possible singularities of the
$c$-function and the coefficients  $\Gamma_\mu(m;\lambda)$ at $\lambda =
\lambda_0.$

\smallskip
\textit{Step II:} Identify a polynomial $p$ so that $$\lambda \to
p(\lambda) \Big ( \sum_{w \in W}
c(m;w\lambda)e^{(\lambda-\rho(m))(x)} \sum_{\mu \in 2\Lambda}
\Gamma_\mu(m;\lambda) e^{-\mu(x)} \Big )$$ is holomorphic in a
neighborhood of $\lambda_0.$

\smallskip
\textit{Step III:} Write $F_{\lambda_0}(m) = a~\partial(\pi)
(pF_\lambda(m))|_{\lambda = \lambda_0}$ where $\partial(\pi)$ is the
differential operator corresponding to the highest degree
homogenous term in $p$ and $a,$ is a non-zero constant. This gives
the series expansion of $F_{\lambda_0}(m).$

\medskip
A careful examination shows that the same proofs go through even
with the assumption that the multiplicity function belongs to
$\mathcal{M}_{1}$.
Indeed, the possible singularities of
the $c$-function and the $\Gamma_\mu(m)$ are contained in the same set of
hyperplanes as listed in \cite[Lemma 2.3]{NPP}. Hence the same
polynomials and differential operators can be used in Step II and
III above. The crucial detail to be checked is the
computation of the constant $b_0(m;\lambda_0)$ appearing in
\cite[Lemma 2.6]{NPP}.

Let $\lambda_0 \in \frakacs$ with $\Re
\lambda_0 \in \overline{(\fa^\ast)^+},$
Its explicit expression in terms of
Harish-Chandra's $c$-function shows that, for root systems of type $BC_n\setminus C_n$, the function $b_0(m;\lambda_0)$ is
non-zero if and only if
\begin{equation}
\label{eq:b-nonzero}
\prod_{\alpha \in \Sigma^+_\rms}
\Gamma\Big(\frac{(\lambda_0)_\alpha}{2} +\frac{m_\rms}{4}+\frac{1}{2}\Big)
\Gamma\Big(\frac{(\lambda_0)_\alpha}{2} +\frac{m_\rms}{4}+\frac{m_\rml}{2}\Big)
\prod_{\alpha \in \Sigma^+_\rmm}
\Gamma\Big(\frac{(\lambda_0)_\alpha}{2} +\frac{m_\rmm}{4}+\frac{1}{2}\Big) \Gamma\Big(\frac{(\lambda_0)_\alpha}{2} +\frac{m_\rmm}{4}\Big)
\end{equation}
is nonsingular.
This is clearly true if $m\in \mathcal{M}_{1}$.
(Notice that the nonvanishing of $b_0(m;\lambda_0)$ identifies the main term in the expansion of
$F_{\lambda_0}(m;x)$ as
$\frac{b_0(m;\lambda_0)}{\pi_0(\rho_0(m))} \pi_0(x) e^{(\lambda_0 - \rho(m))(x)}$,
where $\rho_0(m)$ is defined as in \cite[(58)]{NPP}.
Notice also that $c(m;\lambda_0)$, and hence $b_0(m;\lambda_0)$, can vanish
for $m\in \mathcal{M}_0\setminus \mathcal{M}_1$.)

It follows from the above that \cite[Theorem 2.11]{NPP} and, as a
consequence, \cite[Theorem 3.1]{NPP} continue to hold true for a
multiplicity function $m\in \mathcal{M}_{1}$.
We state it below and refer to \cite{NPP} for any unexplained notation.

\begin{Thm}
\label{thm:hc-expansion}
Suppose $m\in \mathcal{M}_{1}$. Let $\lambda_0 \in \frakacs$ with $\Re
\lambda_0 \in \overline{(\fa^\ast)^+},$ and let $x_0 \in \fa^+$ be
fixed. Then, there are constants $C_1 > 0, C_2 > 0$ and $b > 0$
(depending on $m$, $\lambda_0$ and $x_0$) so that for all $x \in x_0 +
\fa^+ :$
\begin{multline} \label{eq:restF-est}
\Big| \frac{F_{\l_0}(m; x)
e^{-(\Re\l_0-\rho(m))(x)}}{\pi_0(x)} -\Big(
\frac{b_0(m;\l_0)}{\pi_0(\rho_0(m))} e^{i\Im\l_0(x)} + \sum_{w\in
W_{\Re\l_0} \setminus W_{\l_0}} \! \!
\frac{b_w(m;\l_0)\pi_{w,\l_0}(x)}{c_0\pi_0(x)}
e^{i w\Im\l_0(x)} \Big)\Big|  \\
\leq C_1 (1+\b(x))^{-1} +C_2 (1+ \b(x))^{|\Sigma_{\l_0}^+|}
e^{-b\beta(x)}\,,
\end{multline}
where $\beta(x)$ is the minimum of $\alpha(x)$ over the simple roots $\alpha\in \Sigma^+$
and the term $C_1 (1+\b(x))^{-1}$ on the right-hand side of
(\ref{eq:restF-est}) does not occur if $\inner{\alpha}{\l_0}\neq 0$ for all $\alpha\in \Sigma$.
\end{Thm}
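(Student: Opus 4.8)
The plan is to follow the strategy of \cite{NPP} step by step, checking at each point that the only hypothesis really used is $m\in\mathcal{M}_1$ rather than $m\in\mathcal{M}_+$. Concretely, I would first recall that for generic $\l$ one has on $\fa^+$ the expansion $F_\l(m;x)=\sum_{w\in W} c(m;w\l)\Phi_{w\l}(m;x)$ with $\Phi_\l(m;x)=e^{(\l-\rho(m))(x)}\sum_{\mu\in 2\Lambda}\Gamma_\mu(m;\l)e^{-\mu(x)}$, where the $\Gamma_\mu(m;\l)$ satisfy the stated recursion and are meromorphic in $\l$. The key structural observation, valid for any complex multiplicity, is that the poles of the $\Gamma_\mu(m;\cdot)$ lie on the hyperplanes $\inner{\mu}{\mu-2\l}=0$ for $\mu\in 2\Lambda\setminus\{0\}$, hence in the union of hyperplanes listed in \cite[Lemma~2.3]{NPP}; this listing, and therefore the choice of the normalizing polynomial $p$ in Step~II and of the differential operator $\partial(\pi)$ in Step~III, depends only on $\Sigma$ and not on positivity of $m$. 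So Steps I--III transcribe verbatim, producing the series expansion of $F_{\l_0}(m)$ for every $\l_0\in\frakacs$ and every $m$ for which the relevant $c$-function values and $\Gamma_\mu$'s are finite.

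The real content is then to verify the single analytic input that does use a sign condition, namely that the leading coefficient $b_0(m;\l_0)$ from \cite[Lemma~2.6]{NPP} is nonzero when $\Re\l_0\in\overline{(\fa^*)^+}$. Here I would write $b_0(m;\l_0)$ in terms of Harish-Chandra's $c$-function via \eqref{eq:c}: up to a nonvanishing factor it equals the product over $\a\in\Sigma^+_\rms$ of $\Gamma\big(\tfrac{(\l_0)_\a}{2}+\tfrac{m_\rms}{4}+\tfrac12\big)\Gamma\big(\tfrac{(\l_0)_\a}{2}+\tfrac{m_\rms}{4}+\tfrac{m_\rml}{2}\big)$ times the product over $\a\in\Sigma^+_\rmm$ of $\Gamma\big(\tfrac{(\l_0)_\a}{2}+\tfrac{m_\rmm}{4}+\tfrac12\big)\Gamma\big(\tfrac{(\l_0)_\a}{2}+\tfrac{m_\rmm}{4}\big)$, as displayed in \eqref{eq:b-nonzero}. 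Since $\Re(\l_0)_\a\geq 0$ for $\a\in\Sigma^+$ and the $\Gamma$ function has no zeros, this product is finite (nonsingular) precisely when none of the arguments is a nonpositive integer; for $m\in\mathcal{M}_1$ one has $m_\rmm>0$, $m_\rms>0$ and $m_\rms+2m_\rml>0$, which forces each shift $\tfrac{m_\rms}{4}+\tfrac12$, $\tfrac{m_\rms}{4}+\tfrac{m_\rml}{2}=\tfrac{m_\rms+2m_\rml}{4}$, $\tfrac{m_\rmm}{4}+\tfrac12$, $\tfrac{m_\rmm}{4}$ to have strictly positive real part, so the arguments stay off the nonpositive integers. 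This is the step I expect to be the crux, because it is exactly where the definition of standard multiplicity was calibrated (cf. the remark after the definition of $\mathcal{M}_1$ and \cite[Lemma~5.5.2]{HS}); one must also note, as the excerpt does, that on $\mathcal{M}_0\setminus\mathcal{M}_1$ the $c$-function and hence $b_0$ genuinely can vanish, so the hypothesis cannot be weakened.

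Having established the nonvanishing of $b_0(m;\l_0)$, I would identify the main term of the expansion as $\frac{b_0(m;\l_0)}{\pi_0(\rho_0(m))}\,\pi_0(x)\,e^{(\l_0-\rho(m))(x)}$ with $\rho_0(m)$ as in \cite[(58)]{NPP}, and then observe that the remainder estimates in \cite[Theorem~2.11]{NPP} and \cite[Theorem~3.1]{NPP} are obtained from bounds on tails of the $\Gamma_\mu$-series together with Cauchy estimates for the $\l$-derivatives; these bounds depend only on majorizing the recursion coefficients and are insensitive to the sign of $m$, as long as the finitely many poles of $c(m;\cdot)$ and $\Gamma_\mu(m;\cdot)$ near $\l_0$ have been cleared by $p$. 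Collecting the leading term, the subleading $W_{\Re\l_0}$-terms $\sum_{w\in W_{\Re\l_0}\setminus W_{\l_0}} \frac{b_w(m;\l_0)\pi_{w,\l_0}(x)}{c_0\pi_0(x)}e^{iw\Im\l_0(x)}$, and the two error contributions $C_1(1+\b(x))^{-1}$ and $C_2(1+\b(x))^{|\Sigma^+_{\l_0}|}e^{-b\b(x)}$ yields \eqref{eq:restF-est}, with the first error term absent when $\l_0$ is regular, i.e. $\inner{\a}{\l_0}\neq 0$ for all $\a\in\Sigma$ (in which case $W_{\l_0}=\{e\}$ and no pole-clearing derivative beyond $\partial(\pi)$ is needed). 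This completes the proof, the whole argument being a verification that the machinery of \cite{NPP} is robust under the enlargement of the multiplicity domain from $\mathcal{M}_+$ to $\mathcal{M}_1$.
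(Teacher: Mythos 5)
Your proposal is correct and follows essentially the same route as the paper: transcribe Steps I--III of \cite{NPP} unchanged after noting that the singular hyperplanes of the $c$-function and of the coefficients $\Gamma_\mu(m;\cdot)$ depend only on $\Sigma$, and isolate as the sole genuinely new point the nonvanishing of $b_0(m;\l_0)$, which you verify exactly as the paper does via the Gamma-factor criterion \eqref{eq:b-nonzero} and the inequalities $m_\rmm>0$, $m_\rms>0$, $m_\rms+2m_\rml>0$ defining $\mathcal{M}_1$ together with $\Re(\l_0)_\a\geq 0$. Your identification of the main term $\frac{b_0(m;\l_0)}{\pi_0(\rho_0(m))}\pi_0(x)e^{(\l_0-\rho(m))(x)}$ and the remark that the hypothesis cannot be weakened below $\mathcal{M}_1$ also match the paper's discussion.
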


Notice that, for fixed $x_0\in \fa^+$, we have $\beta(x) \asymp |x|$ as $x\to \infty$ in $ x_0+\overline{\fa^+}$, where $|x|$ is the Euclidean norm on $\fa$.

Likewise, one can extend to $m\in \mathcal{M}_{1}$ the following corollary, which restates Theorem \ref{thm:hc-expansion} in the special case where $\l_0\in \overline{(\frak a^*)^+}$.

\begin{Cor} \label{cor:leading-termF-realcase}
Suppose $m\in \mathcal{M}_{1}$.  Let $\l_0 \in \overline{(\frak a^*)^+}$, and let $x_0 \in \frak
a^+$ be fixed. Then there are constants $C_1>0$, $C_2>0$ and $b>0$
(depending on $m$, $\l_0$ and $x_0$) so that for all $x \in x_0+
\overline{\frak a^+}$:
\begin{multline} \label{eq:restF-est-realcase}
\Big| F_{\l_0}(m; x) -\frac{b_0(m;\l_0)}{\pi_0(m;\rho_0)} \pi_0(x) e^{(\l_0-\rho(m))(x)}\Big|  \leq \\
\leq \big[ C_1 (1+\b(x))^{-1} +C_2 (1+ \b(x))^{|\Sigma_{\l_0}^+|}
e^{-b\beta(x)}\big] \pi_0(x) e^{(\l_0-\rho(m))(x)}\,.
\end{multline}
The term $C_1 (1+\b(x))^{-1}$ on the right-hand side of
(\ref{eq:restF-est-realcase}) does not occur if $\inner{\alpha}{\l_0}\neq 0$ for all $\alpha\in \Sigma$.
\end{Cor}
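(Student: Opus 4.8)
The plan is to read off Corollary~\ref{cor:leading-termF-realcase} as the special case of Theorem~\ref{thm:hc-expansion} in which the spectral parameter $\l_0$ is real. First I would observe that, since $\l_0\in \overline{(\frak a^*)^+}\subset \frak a^*$, one has $\Re\l_0=\l_0\in\overline{(\frak a^*)^+}$ (so the hypothesis of Theorem~\ref{thm:hc-expansion} is met) and $\Im\l_0=0$. In particular $W_{\Re\l_0}=W_{\l_0}$, so the oscillatory sum over $W_{\Re\l_0}\setminus W_{\l_0}$ inside the absolute value in \eqref{eq:restF-est} is empty; moreover $e^{i\Im\l_0(x)}=1$ and $e^{-(\Re\l_0-\rho(m))(x)}=e^{-(\l_0-\rho(m))(x)}$. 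Thus, for $m\in \mathcal{M}_1$ (for which Theorem~\ref{thm:hc-expansion} is already available), estimate \eqref{eq:restF-est} reduces to
\begin{equation*}
\Big|\frac{F_{\l_0}(m;x)\,e^{-(\l_0-\rho(m))(x)}}{\pi_0(x)}-\frac{b_0(m;\l_0)}{\pi_0(\rho_0(m))}\Big|
\leq C_1(1+\b(x))^{-1}+C_2(1+\b(x))^{|\Sigma_{\l_0}^+|}e^{-b\b(x)}\,,
\end{equation*}
valid for all $x\in x_0+\overline{\frak a^+}$, with the term $C_1(1+\b(x))^{-1}$ absent when $\inner{\a}{\l_0}\neq 0$ for all $\a\in\Sigma$.

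Next I would multiply this inequality by the strictly positive quantity $\pi_0(x)\,e^{(\l_0-\rho(m))(x)}$. Here one uses that $x_0\in\frak a^+$ forces $x_0+\overline{\frak a^+}\subset\frak a^+$, since for every $y\in\overline{\frak a^+}$ and $\a\in\Sigma^+$ one has $\a(x_0+y)=\a(x_0)+\a(y)>0$. On this set $\pi_0$ is, up to a positive scalar, a product of positive roots, hence $\pi_0(x)>0$; and $e^{(\l_0-\rho(m))(x)}>0$ because $\l_0$ and $x$ are real. Carrying out the multiplication turns the last displayed inequality into \eqref{eq:restF-est-realcase}, with the same constants $C_1,C_2,b$ and with $C_1(1+\b(x))^{-1}$ dropped under the same genericity hypothesis on $\l_0$. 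The only remaining point is the notational identification $\pi_0(\rho_0(m))=\pi_0(m;\rho_0)$, both denoting the polynomial $\pi_0=\pi_0(m)$ of \cite[(58)]{NPP} evaluated at $\rho_0=\rho_0(m)$.

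There is no genuine obstacle here: the argument is a direct specialization. The two points that deserve a line of care are the collapse $W_{\Re\l_0}=W_{\l_0}$, which is what makes the entire oscillatory sum in \eqref{eq:restF-est} disappear and leaves only the single leading term $\frac{b_0(m;\l_0)}{\pi_0(\rho_0(m))}\pi_0(x)e^{(\l_0-\rho(m))(x)}$, and the observation that $\pi_0$ does not vanish on $x_0+\overline{\frak a^+}$, which legitimizes multiplying the estimate through by $\pi_0(x)\,e^{(\l_0-\rho(m))(x)}$.
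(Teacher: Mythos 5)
Your proposal is correct and is exactly the route the paper takes: Corollary \ref{cor:leading-termF-realcase} is presented there as a mere restatement of Theorem \ref{thm:hc-expansion} for real $\l_0$, where $\Im\l_0=0$ and $W_{\Re\l_0}=W_{\l_0}$ kill the oscillatory sum, and one multiplies through by the positive factor $\pi_0(x)e^{(\l_0-\rho(m))(x)}$ on $x_0+\overline{\frak a^+}\subset\frak a^+$. The only cosmetic point is passing from $x\in x_0+\frak a^+$ (as Theorem \ref{thm:hc-expansion} is worded) to the closed cone $x_0+\overline{\frak a^+}$, which follows by continuity of both sides or by applying the theorem with $x_0/2$ in place of $x_0$.
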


It might be useful to observe that for $m\in \mathcal{M}_0$ the only obstruction to \eqref{eq:restF-est} is that $b_0(m;\lambda_0)\neq 0$.

\begin{Cor}
\label{cor:asymptotics-b0}
Let $m\in \mathcal{M}_0$. Then Theorem 4.6 holds for every $\l_0\in \mathfrak{a}_\C^*$ with $\Re\l_0\in\overline{(\mathfrak{a}^*)^+}$ such that
\eqref{eq:b-nonzero} is nonsingular.
\end{Cor}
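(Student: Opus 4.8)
The plan is to revisit the proof of Theorem~\ref{thm:hc-expansion} and to isolate the single place where the hypothesis $m\in\mathcal{M}_1$ was used instead of the weaker $m\in\mathcal{M}_0$. As recalled above, that proof runs through Steps~I--III of \cite{NPP} essentially verbatim, and $\mathcal{M}_1$ entered only in guaranteeing that the constant $b_0(m;\lambda_0)$ of \cite[Lemma~2.6]{NPP} does not vanish, a fact that was there reduced to the non-singularity of the product \eqref{eq:b-nonzero}. Since the latter is now part of the hypothesis, the argument should carry over once one checks that Steps~I--III are themselves valid for an arbitrary $m\in\mathcal{M}_0$.

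First I would observe that Step~I is insensitive to the sign of the multiplicities: the recursion for the coefficients $\Gamma_\mu(m;\lambda)$ has denominators $\inner{\mu}{\mu-2\lambda}$ independent of $m$, so the $\Gamma_\mu(m;\cdot)$ remain meromorphic on $\frakacs$ with poles on the fixed family of hyperplanes of \cite[Lemma~2.3]{NPP}; and for $m\in\mathcal{M}_0$ the zeros and poles in $\lambda$ of Harish-Chandra's $c$-function still occur only where Gamma factors of the type appearing in \eqref{eq:b-nonzero} are singular, hence on hyperplanes of the same combinatorial shape (their arguments being affine in $\lambda$ with $m$-dependent constant term). Therefore, for a fixed $\lambda_0$, Step~II applies unchanged: one chooses a polynomial $p$, a product of affine factors killing the relevant hyperplanes through $\lambda_0$, so that multiplying the Harish-Chandra series term by term by $p(\lambda)$ removes all singularities at $\lambda_0$. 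Step~III is then the same formal manipulation --- applying the constant-coefficient operator $\partial(\pi)$ attached to the top-degree part of $p$, and using that $F_\lambda(m)$ is entire in $\lambda$ --- and reproduces the series expansion of $F_{\lambda_0}(m)$; the remainder bound follows word for word as in \cite[Theorem~2.11]{NPP}, hence as in \cite[Theorem~3.1]{NPP}.

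The key (and only substantive) point is the non-vanishing of the coefficient of the leading term: the expansion \eqref{eq:restF-est} is correct as stated precisely when $b_0(m;\lambda_0)\neq0$, no control on the subordinate coefficients $b_w(m;\lambda_0)$ being needed --- this is exactly the observation recorded just before the statement. Since the explicit formula for $b_0(m;\lambda_0)$ in terms of the $c$-function makes its non-vanishing equivalent to the non-singularity of \eqref{eq:b-nonzero}, which is assumed, one concludes that Theorem~\ref{thm:hc-expansion} holds for the given $m\in\mathcal{M}_0$ and $\lambda_0$ (and likewise the real-parameter restatement of Corollary~\ref{cor:leading-termF-realcase} when $\lambda_0\in\overline{(\fa^*)^+}$). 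The part I expect to require the most care is the bookkeeping in Steps~I--II, i.e.\ checking that for $m\in\mathcal{M}_0$ the admissible singular hyperplanes are of the same type as in the non-negative case, so that the polynomial $p$ and the operator $\partial(\pi)$ from \cite{NPP} still do the job; once that is in hand, nothing else in \cite{NPP} uses positivity of $m$.
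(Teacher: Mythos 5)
Your proposal is correct and follows essentially the paper's own route: the Corollary is obtained exactly by noting that Steps I--III of \cite{NPP} carry over for $m\in\mathcal{M}_0$ (where $F_\lambda(m)$ and $G_\lambda(m)$ are still defined and entire in $\lambda$), and that the only place where $\mathcal{M}_1$ intervened was to guarantee $b_0(m;\lambda_0)\neq 0$, which for $m\in\mathcal{M}_0$ is precisely the assumed non-singularity of \eqref{eq:b-nonzero}.
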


\subsection{Sharp estimates}
\label{subsection:sharp}

In this subsection we assume that $m\in \mathcal{M}_3$.
Let $\mathcal{M}_3^0$ denote the interior of $\mathcal{M}_3$.
Since $\mathcal{M}_3^0 \subset \mathcal{M}_1$, the results of both subsections \ref{subsection:estimates} and \ref{subsection:asymptotics} are available on $\mathcal{M}_3^0$.

Let $\l\in \frak a^*$. Using the nonsymmetric hypergeometric
functions $G_\l(m)$, their relation to the hypergeometric
function, and the systems of differential-reflection equations they satisfy, Schapira proved in \cite{Sch08} the following local Harnack principle for the hypergeometric function $F_{\l}(m)$: for all $x \in \overline{\frak a^+}$
\begin{equation} \label{eq:localHarnack}
\nabla F_{\l}(m;x)=-\frac{1}{|W|} \sum_{w \in W}
w^{-1}(\rho(m)-\l)G_{\l}(m;wx)\,,
\end{equation}
the gradient being taken with respect to the space variable $x\in
\frak a$. It holds for every multiplicity function $m$ for which both $G_\l(m)$ and $F_\l(m)$ are defined. See \cite[Lemma 3.4]{Sch08}. Since
$\partial_\xi F=\inner{\nabla F}{\xi}$ and since $G_\l(m)$ and $F_\l(m)$ are real and non-negative for $m\in \mathcal{M}_3$, one obtains as in \textit{loc. cit.}  that for all $\xi \in \frak a$
$$
\partial_\xi \Big( e^{K_\xi \frac{\inner{\xi}{\cdot}}{|\xi|^2}} F_{\l}(m;\cdot) \Big) \leq 0\,,
$$
where $K_\xi=\max_{w\in W} (\rho(m)-\l)(w\xi)$. This in turn yields
the following subadditivity property, which is implicit in
\cite{Sch08} for $m\in \mathcal{M}_+$ and in fact holds also for $m\in \mathcal{M}_3^0$ and, by continuity, on $\mathcal{M}_3$.

\begin{Lemma}\label{lemma:subadd-Schapira}
Suppose $m\in \mathcal{M}_3^0$.
Let $\l\in\frak a^*$. Then for all $x, x_1 \in \frak a$ we have
\begin{equation} \label{eq:subadd-Schapira}
F_{\l}(m; x+x_1) e^{-\max_{w\in W} (\l-\rho(m))(wx_1)}
\leq F_\l(m; x) \leq F_{\l}(m; x+x_1) e^{\max_{w\in W}
(\rho(m)-\l)(wx_1)}\,.
\end{equation}
In particular, if $\l \in \overline{(\frak a^*)^+}$ and $x_1 \in
\frak a^+$, then
\begin{equation} \label{eq:subadd-Schapira-special}
F_{\l}(m; x+x_1) e^{-(\l+\rho(m))(x_1)} \leq
F_{\l}(m; x) \leq F_{\l}(m; x+x_1)
e^{(\l+\rho(m))(x_1)}\,
\end{equation}
for all $x \in \frak a$.
\end{Lemma}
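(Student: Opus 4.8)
The plan is to run, along straight line segments, the monotonicity argument underlying the subadditivity that is implicit in \cite{Sch08} for $m\in\mathcal{M}_+$; the only additional input is the positivity of \emph{both} $F_\l(m)$ and $G_\l(m)$, which holds on $\mathcal{M}_3$ (hence on $\mathcal{M}_3^0$) by Proposition \ref{prop:positivity-estimates-M3}. First I would record a pointwise gradient estimate. Fix $\xi\in\fa\setminus\{0\}$ and set $K_\xi=\max_{w\in W}(\rho(m)-\l)(w\xi)$. Since $\partial_\xi F=\inner{\nabla F}{\xi}$ and $\big(w^{-1}(\rho(m)-\l)\big)(\xi)=(\rho(m)-\l)(w\xi)$, the local Harnack principle \eqref{eq:localHarnack} gives
\[
\partial_\xi F_\l(m;y)=-\frac{1}{|W|}\sum_{w\in W}(\rho(m)-\l)(w\xi)\,G_\l(m;wy)\ \geq\ -K_\xi\,F_\l(m;y)\,,
\]
where the inequality uses $G_\l(m;wy)\geq 0$ and $\tfrac{1}{|W|}\sum_{w\in W}G_\l(m;wy)=F_\l(m;y)$ (cf.\ \eqref{eq:F-G}, after the substitution $w\mapsto w^{-1}$). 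Putting $\phi_\xi(y)=K_\xi\inner{\xi}{y}/|\xi|^2$, so that $\partial_\xi\phi_\xi\equiv K_\xi$, this reads $\partial_\xi\big(e^{\phi_\xi}F_\l(m;\cdot)\big)=e^{\phi_\xi}\big(\partial_\xi F_\l(m;\cdot)+K_\xi F_\l(m;\cdot)\big)\geq 0$; hence $t\mapsto e^{\phi_\xi(x+t\xi)}F_\l(m;x+t\xi)$ is non-decreasing on $\R$, for every $x\in\fa$.

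Next I would use this with the two directions $\xi=x_1$ and $\xi=-x_1$, the case $x_1=0$ being trivial. For $\xi=x_1$, comparing $e^{\phi_{x_1}(x+tx_1)}F_\l(m;x+tx_1)$ at $t=0$ and $t=1$ and using $\phi_{x_1}(x+x_1)-\phi_{x_1}(x)=K_{x_1}$ gives $F_\l(m;x)\leq e^{K_{x_1}}F_\l(m;x+x_1)$, i.e.\ the upper bound in \eqref{eq:subadd-Schapira}. For $\xi=-x_1$ the function $t\mapsto e^{\phi_{-x_1}(x+tx_1)}F_\l(m;x+tx_1)$ is non-increasing; comparing $t=0$ and $t=1$ with $\phi_{-x_1}(x+x_1)-\phi_{-x_1}(x)=-K_{-x_1}$ yields $e^{-K_{-x_1}}F_\l(m;x+x_1)\leq F_\l(m;x)$, and since $K_{-x_1}=\max_{w}(\rho(m)-\l)(-wx_1)=\max_{w}(\l-\rho(m))(wx_1)$ this is the lower bound in \eqref{eq:subadd-Schapira}. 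This proves \eqref{eq:subadd-Schapira}.

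For the ``in particular'' statement I would first note that $\rho(m)\in\overline{(\fa^*)^+}$ whenever $m\in\mathcal{M}_3$: computing $\rho(m)$ from \eqref{eq:rho} for a general multiplicity, the coefficient of $\beta_j$ in $\rho(m)$ equals $\tfrac12\big(\tfrac{m_\rms+2m_\rml}{2}+(j-1)m_\rmm\big)$, which is $\geq 0$ and nondecreasing in $j$ because $m_\rms+2m_\rml\geq 0$ and $m_\rmm\geq 0$; since the $\beta_j$ are orthogonal, pairing $\rho(m)$ against $\tfrac{\beta_j}{2}$, $\beta_j$ and $\tfrac{\beta_i\pm\beta_j}{2}$ gives dominance. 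Now let $\l\in\overline{(\fa^*)^+}$ and $x_1\in\fa^+$. Because $-\id\in W$ for every root system of type $BC_r$ considered here, the $W$-orbit of $x_1$ is stable under $y\mapsto -y$, so for any dominant $\mu\in\fa^*$ one has $\max_{w}\mu(wx_1)=\mu(x_1)$ and $\min_{w}\mu(wx_1)=-\mu(x_1)$, hence $|\mu(wx_1)|\leq\mu(x_1)$ for all $w$. Applying this to $\mu=\rho(m)$ and $\mu=\l$ gives $\max_{w}(\rho(m)-\l)(wx_1)\leq(\rho(m)+\l)(x_1)$ and $\max_{w}(\l-\rho(m))(wx_1)\leq(\rho(m)+\l)(x_1)$; inserting these into \eqref{eq:subadd-Schapira} and using $F_\l(m;x+x_1)\geq 0$ yields \eqref{eq:subadd-Schapira-special}.

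I do not expect a serious obstacle: the whole argument is an integration of a gradient estimate along a segment. The one place where the hypothesis on $m$ is genuinely used is the input positivity of both $F_\l(m)$ and $G_\l(m)$ (Proposition \ref{prop:positivity-estimates-M3}); once $G_\l(m)\geq 0$ the displayed estimate goes through unchanged, the sign of the individual coefficients $(\rho(m)-\l)(w\xi)$ in \eqref{eq:localHarnack} playing no role. The only bookkeeping subtlety is the dominance of $\rho(m)$ on $\mathcal{M}_3$, checked above; note also that the argument in fact applies on all of $\mathcal{M}_3$, so no separate continuity step is needed.
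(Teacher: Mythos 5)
Your proof is correct and follows essentially the same route as the paper: the local Harnack principle \eqref{eq:localHarnack} together with the positivity of $F_\l(m)$ and $G_\l(m)$ on $\mathcal{M}_3$ (Proposition \ref{prop:positivity-estimates-M3}) gives the exponentially weighted monotonicity, which you integrate along the directions $\pm x_1$ and then specialize using the dominance of $\rho(m)$ and $\l$. Your sign, $\partial_\xi\big(e^{K_\xi \inner{\xi}{\cdot}/|\xi|^2}F_{\l}(m;\cdot)\big)\geq 0$ with $K_\xi=\max_{w\in W}(\rho(m)-\l)(w\xi)$, is indeed the one that yields \eqref{eq:subadd-Schapira}.
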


Together with Corollary \ref{cor:leading-termF-realcase}, the
above lemma yields the following global estimates of
$F_{\l}(m; x).$

\begin{Thm}\label{thm:real}
Let $m\in \mathcal{M}_3$ and $\l_0 \in \overline{(\frak a^*)^+}$. Then for all $x \in
\overline{\frak a^+}$ we have
\begin{equation}
F_{\l_0}(m; x) \asymp \big[\prod_{\a\in\Sigma_{\l_0}^0}
(1+\a(x))\big] e^{(\l_0-\rho(m))(x)}\,,
\end{equation}
where $\Sigma_{\l_0}^0=\{ \a\in \Sigma^+_\rms\cup \Sigma^+_\rmm:\inner{\a}{\l_0}=0\}$.
\end{Thm}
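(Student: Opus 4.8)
By the $W$-invariance of $F_{\l_0}(m;\cdot)$ it suffices to prove the estimate on $\overline{\fa^+}$. The plan is to settle first the case $m\in\mathcal{M}_3^0$ — where, since $\mathcal{M}_3^0\subset\mathcal{M}_1$, both the refined Harish-Chandra expansion of Corollary \ref{cor:leading-termF-realcase} and Schapira's subadditivity inequality \eqref{eq:subadd-Schapira-special} are available — and then to pass to $m\in\mathcal{M}_3$ by continuity in $m$. The two tools are complementary: Corollary \ref{cor:leading-termF-realcase} gives the two-sided bound on the part of $\overline{\fa^+}$ that is far from all walls, while \eqref{eq:subadd-Schapira-special} transports it from there to the whole chamber.

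For $m\in\mathcal{M}_3^0$ I would proceed as follows. Since $m\in\mathcal{M}_1$, the leading coefficient $b_0(m;\l_0)$ in \eqref{eq:restF-est-realcase} is nonzero because \eqref{eq:b-nonzero} is nonsingular; and by the definitions of $\rho_0$ and $\pi_0$ in \cite{NPP} — the long roots not contributing to the relevant part of Harish-Chandra's $c$-function — the polynomial factor $\pi_0(x)$ equals $\prod_{\a\in\Sigma_{\l_0}^0}\a(x)$. As the right-hand side of \eqref{eq:restF-est-realcase} is $o(1)$ times the leading term as $\beta(x)\to\infty$, there is $x_1\in\fa^+$ with $F_{\l_0}(m;x)\asymp\pi_0(x)\,e^{(\l_0-\rho(m))(x)}$ for all $x\in x_1+\overline{\fa^+}$; on that set $\a(x)\ge\a(x_1)>0$ for every $\a\in\Sigma^+$, so $\pi_0(x)\asymp\prod_{\a\in\Sigma_{\l_0}^0}(1+\a(x))$ there. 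For an arbitrary $x\in\overline{\fa^+}$, since $\l_0\in\overline{(\fa^*)^+}$ and $x_1\in\fa^+$, the inequality \eqref{eq:subadd-Schapira-special} yields $F_{\l_0}(m;x)\asymp F_{\l_0}(m;x+x_1)$ with the $x$-independent constants $e^{\pm(\l_0+\rho(m))(x_1)}$; applying the previous estimate at $x+x_1\in x_1+\overline{\fa^+}$ and invoking $\a(x+x_1)=\a(x)+\a(x_1)\asymp 1+\a(x)$ (uniformly in $x\in\overline{\fa^+}$, as $\a(x)\ge0$ and $\a(x_1)>0$) together with $e^{(\l_0-\rho(m))(x+x_1)}\asymp e^{(\l_0-\rho(m))(x)}$, one arrives at $F_{\l_0}(m;x)\asymp\prod_{\a\in\Sigma_{\l_0}^0}(1+\a(x))\,e^{(\l_0-\rho(m))(x)}$ on all of $\overline{\fa^+}$, for every $m\in\mathcal{M}_3^0$.

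For $m\in\mathcal{M}_3\setminus\mathcal{M}_3^0$ I would finish by continuity: $F_{\l_0}(m;x)$ is holomorphic in $m$, the comparison function $\prod_{\a\in\Sigma_{\l_0}^0}(1+\a(x))\,e^{(\l_0-\rho(m))(x)}$ varies continuously with $m$, and \eqref{eq:subadd-Schapira-special} persists on all of $\mathcal{M}_3$; so, by the propagation step above, it is enough to see that the far-from-walls two-sided bound survives the limit $m\to\partial\mathcal{M}_3$. This is the part I expect to be the main obstacle. On the face $m_\rml=0$ (with $m_\rms,m_\rmm>0$) one is still inside $\mathcal{M}_1$ and nothing changes, but on the face $m_\rms+2m_\rml=0$ one leaves $\mathcal{M}_1$ and Corollary \ref{cor:leading-termF-realcase} no longer applies directly; there one must check — by a normal-families argument on $x_1+\overline{\fa^+}$ and a direct inspection of \eqref{eq:b-nonzero}, in which a $\Gamma$-factor degenerates to $\Gamma(\tfrac{(\l_0)_\a}{2})$ and is singular precisely when $\inner{\a}{\l_0}=0$ — that the constants hidden in $\asymp$ stay bounded away from $0$ and $\infty$, exactly as in the analogous boundary passages of \cite{NPP} and \cite{Sch08}.
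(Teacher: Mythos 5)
Your main argument --- Corollary \ref{cor:leading-termF-realcase} on a translated cone $x_1+\overline{\fa^+}$ far from the walls, followed by propagation to all of $\overline{\fa^+}$ via the subadditivity \eqref{eq:subadd-Schapira-special} with the fixed shift $x_1$ --- is exactly the paper's proof, which simply invokes \cite[Theorem 3.4]{NPP}. For $m\in\mathcal{M}_3^0\subset\mathcal{M}_1$ your write-up of this step is correct: $b_0(m;\l_0)\neq 0$ because \eqref{eq:b-nonzero} is nonsingular, your identification of $\pi_0(x)$ with $\prod_{\a\in\Sigma_{\l_0}^0}\a(x)$ agrees with the conventions of \cite{NPP} (long roots are proportional to short ones, so each line is counted once), and the transfer of the estimate from $x+x_1$ to $x$ with $x$-independent constants is the same computation as in \cite{NPP}.

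The genuine gap is your last paragraph. On the face $m_\rms+2m_\rml=0$ the degenerating $\Gamma$-factor you point to sits in \eqref{eq:b-nonzero}, i.e.\ in the denominator of the $c$-function, so its pole means $b_0(m;\l_0)=0$ whenever $\inner{\l_0}{\beta_j}=0$ for some $j$; the leading term of the Harish-Chandra expansion then vanishes, and no normal-families or continuity argument can keep the constants in $\asymp$ away from $0$ and $\infty$, because the two-sided estimate in the stated form actually fails there. Concretely, in rank one take $m=(m_\rms,m_\rmm,m_\rml)=(2,0,-1)\in\mathcal{M}_3$ and $\l_0=0=\rho(m)$: then $F_{\l_0}(m;x)=\hyper{0}{0}{1}{-\sinh^2 x}\equiv 1$ (this is just $F_{\rho(m)}(m)\equiv 1$, as used in the proof of Theorem \ref{thm:bdd}), while the claimed right-hand side is $\asymp 1+\tfrac{\beta_1}{2}(x)$, which is unbounded; the same construction with $\l_0=\rho(m)$ works in any rank, and an analogous failure occurs on the face $m_\rmm=0$ when $\l_0$ is orthogonal to a medium root. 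So there are no ``analogous boundary passages'' in \cite{NPP} or \cite{Sch08} to imitate --- both work in situations where $b_0(m;\l_0)\neq0$ --- and your plan can only establish the theorem for $m\in\mathcal{M}_3^0$, together with those boundary pairs $(m,\l_0)$ for which \eqref{eq:b-nonzero} is nonsingular; for the latter Corollary \ref{cor:asymptotics-b0} applies directly and no limit in $m$ is needed, since Lemma \ref{lemma:subadd-Schapira} (hence \eqref{eq:subadd-Schapira-special}) does hold on all of $\mathcal{M}_3$ by positivity. To be fair, the paper's own one-line proof, citing \cite{NPP} (nonnegative multiplicities), glosses over exactly this point, so your instinct that the boundary is the delicate part is sound; but as a proof of the statement for every $m\in\mathcal{M}_3$ the final step cannot be completed as proposed.
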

\begin{proof}
Same as in \cite[Theorem 3.4]{NPP}.
\end{proof}

\subsection{The case of 1-parameter deformed multiplicities}
\label{subsection:estimates-mell}

For $m\in \mathcal{M}_0$ and $\ell\in \R$ we set
\begin{equation}
\label{eq:mult-l-gen}
m_\a(\ell)=\begin{cases}
m_\rms+2\ell  &\text{if $\a\in \Sigma_\rms$}\\
m_\rmm &\text{if $\a\in \Sigma_\rmm$}\\
m_\rml-2\ell  &\text{if $\a\in \Sigma_\rml$}\,.
\end{cases}
\end{equation}
(This extends \eqref{eq:mult-l} by dropping the condition $m_\rml=1$.) Notice that $m(\ell)\in \mathcal{M}_0$ because $m_\rms(\ell)+m_\rml(\ell)=m_\rms+m_\rml$.
Recall that in this paper we consider the root system $C_r$ as a root system
$BC_r$ with $m_\rms=0$.

For a fixed $m=(m_\rms,m_\rmm,m_\rml)$ we shall use the notation
\begin{equation}
\label{eq:lmin-lmax}
\ell_{\rm min}(m)=-\frac{m_\rms}{2}\qquad \text{and}\qquad  \ell_{\rm max}(m)=\frac{m_\rms}{2}+m_\rml\,.
\end{equation}
We simply write $\ell_{\rm min}$ and $\ell_{\rm max}$
when this does not cause any ambiguity.

\begin{Thm}
\label{thm:M+M3}
Let $m^0=(m^0_\rms, m^0_\rmm,m^0_\rml)\in \mathcal{M}_0$. Then  $m^0\in \mathcal{M}_+ \cup \mathcal{M}_3$ if and only if there are
$m=(m_\rms,m_\rmm,m_\rml)\in \mathcal{M}_+$ and
$\ell\in \R$ so that $m^0=m(\ell)$ and
$\ell \in \big[\ell_{\rm min}(m),\ell_{\rm max}(m)\big]$.
Moreover, $m^0\in \mathcal{M}_1=(\mathcal{M}_+ \cup \mathcal{M}_3^0)^0$ if and only if  $m=(m_\rms,m_\rmm,m_\rml)\in \mathcal{M}_+$ is as above and
$\ell \in \big]\ell_{\rm min}(m),\ell_{\rm max}(m)\big[$.
\end{Thm}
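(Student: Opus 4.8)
The plan is to reduce Theorem~\ref{thm:M+M3} to elementary affine identities relating $\ell$, $\ell_{\rm min}(m)$, $\ell_{\rm max}(m)$ to the components of $m^0$, exploiting that the deformation $m\mapsto m(\ell)$ of \eqref{eq:mult-l-gen} changes only the $\rms$-- and $\rml$--components and leaves $m_\rmm$ untouched. The first step is to record that, for any $m\in\mathcal{M}$, any $\ell\in\R$, and $m^0:=m(\ell)$,
\[
\ell-\ell_{\rm min}(m)=\tfrac12\,m^0_\rms\qquad\text{and}\qquad\ell_{\rm max}(m)-\ell=\tfrac12\big(m^0_\rms+2m^0_\rml\big),
\]
together with $m^0_\rmm=m_\rmm$; these are immediate from \eqref{eq:mult-l-gen} and \eqref{eq:lmin-lmax} (for instance $\ell-\ell_{\rm min}(m)=\ell+\tfrac{m_\rms}{2}=\tfrac12(m_\rms+2\ell)$, and $\ell_{\rm max}(m)-\ell=\tfrac{m_\rms}{2}+m_\rml-\ell=\tfrac12\big((m_\rms+2\ell)+2(m_\rml-2\ell)\big)$). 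Consequently $\ell\in[\ell_{\rm min}(m),\ell_{\rm max}(m)]$ is equivalent to the pair of inequalities $m^0_\rms\ge 0$ and $m^0_\rms+2m^0_\rml\ge 0$, and $\ell$ lying in the open interval is equivalent to the same two inequalities made strict.

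The second step is to describe the target sets in the same language. From \eqref{eq:M+} and \eqref{eq:M3}, together with the remark that $m_\rml\ge 0$ forces $m_\rms+2m_\rml\ge m_\rms$, one obtains $\mathcal{M}_+\cup\mathcal{M}_3=\{m\in\mathcal{M}:\ m_\rmm\ge 0,\ m_\rms\ge 0,\ m_\rms+2m_\rml\ge 0\}$, which is contained in $\mathcal{M}_0$ and whose interior is precisely $\mathcal{M}_1$ (all three inequalities made strict), as recalled in the text after \eqref{eq:M3}. Thus the theorem amounts to matching the conditions on $\ell$ from Step~1 with the three half-space conditions on $m^0$ from Step~2, and since $m^0_\rmm=m_\rmm$ the $\rmm$--condition carries over for free.

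For the ``if'' direction I would simply read this off: if $m\in\mathcal{M}_+$, $m^0=m(\ell)$ and $\ell\in[\ell_{\rm min}(m),\ell_{\rm max}(m)]$, then by Step~1 $m^0_\rms\ge 0$ and $m^0_\rms+2m^0_\rml\ge 0$, while $m^0_\rmm=m_\rmm\ge 0$, so $m^0\in\mathcal{M}_+\cup\mathcal{M}_3$ by Step~2; if $\ell$ lies in the open interval the inequalities become strict and $m^0\in\mathcal{M}_1$. For the ``only if'' direction, given $m^0\in\mathcal{M}_+\cup\mathcal{M}_3\subset\mathcal{M}_0$, I would produce an explicit witness by ``flattening'' the $\rml$--component: set $\ell:=-m^0_\rml/2$ and $m:=(m^0_\rms+m^0_\rml,\,m^0_\rmm,\,0)$, so that $m(\ell)=m^0$ by \eqref{eq:mult-l-gen}. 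Then $m\in\mathcal{M}_+$, since $m_\rml=0$, $m_\rmm=m^0_\rmm\ge 0$ and $m_\rms=m^0_\rms+m^0_\rml\ge 0$ (this is the only place where $m^0\in\mathcal{M}_0$ is used), and the Step~1 identities give $\ell-\ell_{\rm min}(m)=\tfrac12 m^0_\rms\ge 0$ and $\ell_{\rm max}(m)-\ell=\tfrac12(m^0_\rms+2m^0_\rml)\ge 0$, i.e.\ $\ell\in[\ell_{\rm min}(m),\ell_{\rm max}(m)]$; if in addition $m^0\in\mathcal{M}_1$, both are strict and $\ell\in\,]\ell_{\rm min}(m),\ell_{\rm max}(m)[$.

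I do not expect a serious obstacle here: the argument is a change of variables together with careful bookkeeping of which half-space inequalities are preserved and which are reversed under $m\mapsto m(\ell)$. The one point that merits a word is the $\rmm$--component, which is inert under the deformation ($m^0_\rmm=m_\rmm$) and therefore requires no matching of inequalities beyond this observation.
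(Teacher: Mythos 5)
Your proof is correct and takes essentially the same route as the paper: your identities $\ell-\ell_{\rm min}(m)=\tfrac12 m^0_\rms$ and $\ell_{\rm max}(m)-\ell=\tfrac12\big(m^0_\rms+2m^0_\rml\big)$ are precisely the bookkeeping the paper carries out in its converse direction, and your witness $m=(m^0_\rms+m^0_\rml,\,m^0_\rmm,\,0)\in\mathcal{M}_+$ with $\ell=-m^0_\rml/2$ is the paper's witness verbatim. (Like the paper's own proof, your ``if'' claim for the $\mathcal{M}_1$ statement treats only the $\rms$-- and $\rml$--inequalities as becoming strict while $m^0_\rmm=m_\rmm$ is merely $\geq 0$ for $m\in\mathcal{M}_+$; this imprecision is shared with the paper, so there is nothing to fix relative to it.)
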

\begin{proof}
If $m^0\in \mathcal{M}_+ \cup \mathcal{M}_3$, then
$m^0=m(\ell)$ for
$$
m=(m_\rms,m_\rmm,m_\rml)=(m^0_\rms+m^0_\rml,m^0_\rmm,0)\in \mathcal{M}_+ \quad\text{and} \quad \ell=-\frac{m^0_\rml}{2}\,.$$
Observe that $\ell$ satisfies
$-\frac{m_\rms}{2}=-\frac{m^0_\rms+m^0_\rml}{2}\leq \ell \leq \frac{m^0_\rms+m^0_\rml}{2}=\frac{m_\rms}{2}+m_\rml$
because $m^0_\rms\geq 0$ and $m^0_\rms+2m^0_\rml\geq 0$.
The inequalities for $\ell$ are strict if $m^0 \in \mathcal{M}_1$ since in this case
$m^0_\rms>0$.

Conversely, suppose that $m^0=m(\ell)$ for $m\in \mathcal{M}_+$ and $\ell$ as in the statement.
Then $-m_\rms\leq 2\ell \leq m_\rms+2m_\rml$ and
$m_\rms+m_\rml=m^0_\rms+m^0_\rml$.
Hence
$$m^0_\rms=m_\rms+2\ell\leq 2(m_\rms+m_\rml)=2(m^0_\rms+m^0_\rml), \quad \text{i.e.} \quad m^0_\rms+2m^0_\rml\geq 0\,.$$ Moreover,
$m^0_\rms=m_\rms+2\ell\geq m_\rms-m_\rms=0$.
Thus $m^0 \in \mathcal{M}_+\cup \mathcal{M}_3$.
All the inequalities are strict if $\ell \in \big]\ell_{\rm min}(m),\ell_{\rm max}(m)\big[$. Hence, in this case, $m^0 \in \mathcal{M}_1$.
\end{proof}

For $(m_\rms,m_\rml)=(m_\rms,1)$ geometric, the values of $(m_\rms(\ell),m_\rml(\ell)) =(m_\rms+2\ell,1-2\ell)$
with $\ell\in \big[\ell_{\rm min},\ell_{\rm max}]$ are
represented in Figure 2.%\ref{fig:mell}.

\begin{figure}
\label{fig:mell}
\begin{tikzpicture}[
    scale=.5,
    axis/.style={very thick, ->, >=stealth'},
    equation line/.style={thick},
    equation line dashed/.style={thick, dashed},
    reference line/.style={thin},
    ]
% dotted lines
\draw [decorate sep={.8mm}{10mm},fill] (0,1) -- (10,1);
\draw [decorate sep={.5mm}{10mm},fill] (0,0) -- (20,0);
\draw [decorate sep={.4mm}{3mm},fill] (2.5,9.5) -- (5,12);
\draw [decorate sep={.4mm}{3mm},fill] (8.5,3.5) -- (11,6);
\draw [decorate sep={.4mm}{3mm},fill] (16.5,-4.5) -- (19,-2);
\draw [dotted] (2,0) -- (2,1);
\draw [dotted] (4,0) -- (4,1);
\draw [dotted] (6,0) -- (6,1);
\draw [dotted] (8,0) -- (8,1);
\draw [dotted] (10,0) -- (10,1);
\draw [dotted] (12,0) -- (12,1);
\draw [dotted] (14,0) -- (14,1);
\draw [dotted] (16,0) -- (16,1);
\draw [dotted] (18,0) -- (18,0);
\draw [dotted] (20,0) -- (20,0);

\draw[fill=black] (16,1) circle (.8mm);

\draw[equation line dashed] (4,13) -- (6,11);
\draw[equation line dashed] (19,-2) -- (21,-4);

% axis
    \draw[axis] (-.5,0)  -- (22,0) node(xline)[right]
        {$m_{\mathrm{s}}$};
    \draw[axis] (0,-.5) -- (0,14.5) node(yline)[above]
        {$m_{\mathrm{l}}$};
% lines
     \draw[reference line] (0,0) -- (20,-10)
        node[below left, text width=8em, rotate=-28]
        {$m_{\mathrm{s}}+2m_{\mathrm{l}}=0$};
     \draw[reference line] (0,1) -- (22,1)
        node[above, text width=10em, rotate=0]
        {\null\qquad $m_{\mathrm{l}}=1$};
     \draw[equation line] (0,1) -- (2,-1);
     \draw[equation line] (0,3) -- (6,-3);
     \draw[equation line] (0,5) -- (10,-5);
     \draw[equation line] (0,7) -- (14,-7);
     \draw[equation line] (0,9) -- (18,-9);
     \draw[equation line] (0,11) -- (20,-9);
    \draw[equation line] (6,11) -- (19,-2);
% text

\node[left, rotate=-45] at (0,1) {$\Sigma=C_r$ \;};
\node[left, rotate=-45] at (0,3)
        {$\mathrm{SU}(p,p+1)$ \;};
\node[left, rotate=-45] at (0,5)
        {$\mathrm{SO}^*(2(2n+1))$
        \& $\mathrm{SU}(p,p+2)$ \;};
\node[left, rotate=-45] at (0,7)
        {$\mathrm{SU}(p,p+3)$ \;};
\node[left, rotate=-45] at (0,9)
        {$\mathfrak{e}_{6(-14)}$
        \& $\mathrm{SU}(p,p+4)$ \;};
\node[left, rotate=-45] at (0,11)
        {$\mathrm{SU}(p,p+5)$ \;};
\node[left, rotate=-45] at (4,13)
        {$\mathrm{SU}(p,q)$ \;};

\node at (14,8) {$\mathcal{M}_+$};
\node at (19,-5) {$\mathcal{M}_3$};

\draw (-.4,-.2) node[below,scale=.6] {$0$};
  	\draw (2,-.2) node[below,scale=.6] {$2$};
  	\draw (4,-.2) node[below,scale=.6] {$4$};
  	\draw (6,-.2) node[below,scale=.6] {$6$};
  	\draw (8,-.2) node[below,scale=.6] {$8$};
  	\draw (10,-.2) node[below,scale=.6] {$10$};
  	\draw (16,-.2) node[below,scale=.6] {$2(q-p)$};

\node[right, scale=1] at (7,15) {%
\parbox{8truecm}
{
\begin{tabular}{|c|c|c|c|}
\hline
$G$ & $m_\rms$ & $\ell_{\rm min}$
& $\ell_{\rm max}$\\
\hline
with $\Sigma=C_r$ & 0 & 0 & 1\\
\hline
$\SO^*(2(2n+1))$ & 4 & $-2$ & 3\\
\hline
$\mathfrak{e}_{6(-14)}$ & 8 & $-4$ & 5\\
\hline
$\SU(p,q)$ $q>p$ & $2(q-p)$ & $p-q$ & $q-p+1$\\
\hline
\end{tabular}
}};
\end{tikzpicture}
\caption{$(m_\rms(\ell),m_\rml(\ell))$ for geometric
$(m_\rms,m_\rml=1)$}
%\label{fig:mell}
\end{figure}
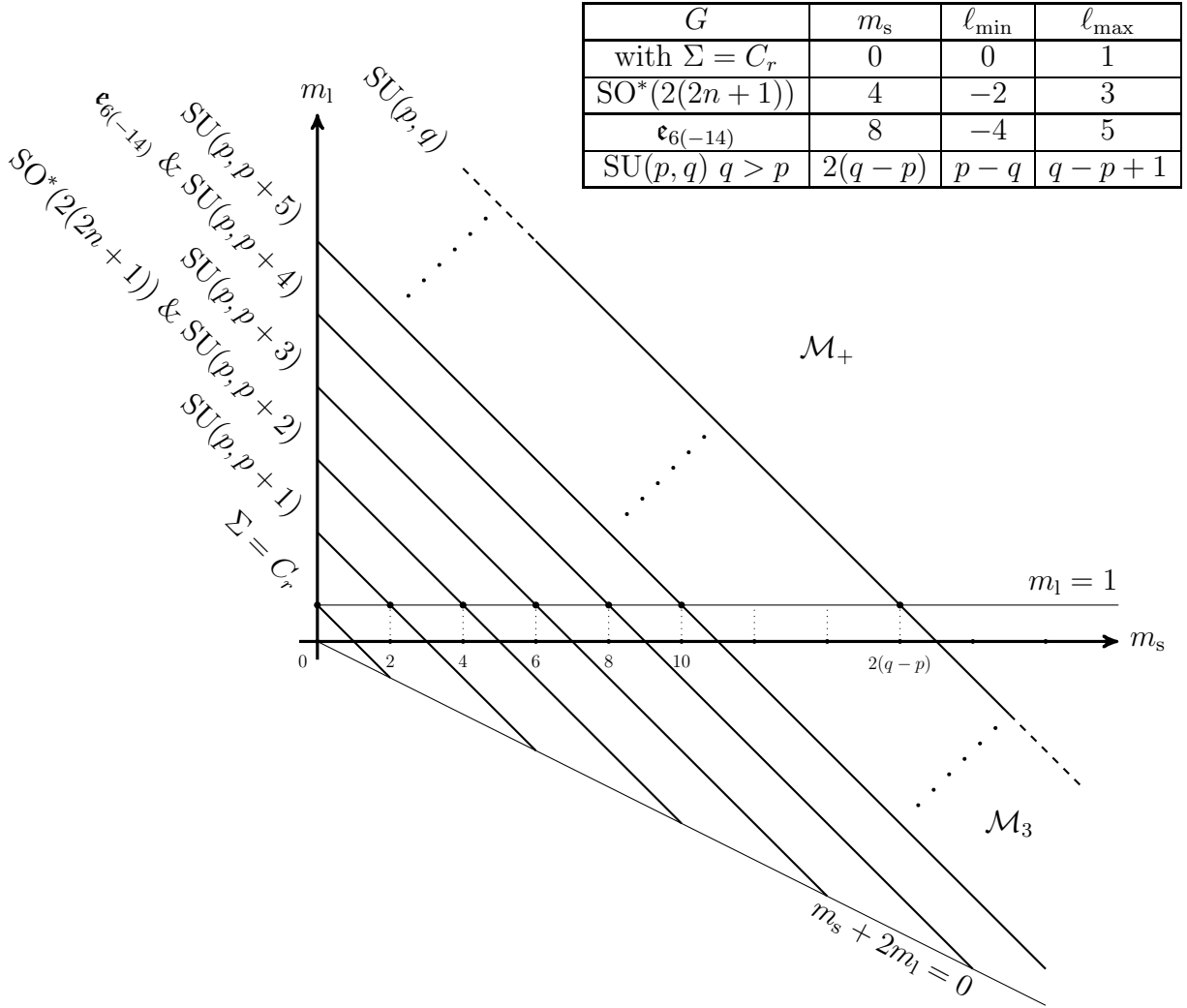

\subsection{Consequences for the $\tau_{-\ell}$-hypergeometric functions}
\label{subsection:taul}
Let $m=(m_\rms,m_\rmm,m_\rml=1)\in \mathcal{M}_+$.
Using \eqref{eq:Fell-F} and \eqref{eq:Gell-G}, we can obtain from the results of this sections some estimates and asymptotic properties for the $\tau_{-\ell}$-hypergeometric functions. By Theorem \ref{thm:M+M3}, we have $m(\ell)\in
\mathcal{M}_+\cup \mathcal{M}_3$ for $m\in \mathcal{M}_+$ and
$\ell\in \big[\ell_{\rm min}(m),\ell_{\rm max}(m)\big]$.

This implies that Theorem \ref{thm:hc-expansion} and
Corollary \ref{cor:leading-termF-realcase} hold true for
$F_\l(m(\ell))$ for all $m\in \mathcal{M}_+$ and $\ell\in \big]\ell_{\rm min}(m),\ell_{\rm max}(m)\big[$. In turn, \eqref{eq:Fell-F} yields analogous statements for the $\tau_{-\ell}$-hypergeometric functions.

Recall that $\ell_{\rm min}=-\frac{m_\rms}{2}$ and $\ell_{\rm max}=\frac{m_\rms}{2}+m_\rml=\frac{m_\rms}{2}+1$. (We are omitting from the notation the dependence on $m$ of $\ell_{\rm min}$ and $\ell_{\rm max}$). So
$\ell_{\rm min} \leq 0 < \ell_{\rm max}=|\ell_{\rm min}|+1$.

Recalling also that $F_{\ell,\lambda}=F_{-\ell,\lambda}$, we see that we can extend the inequalities for $F_{\ell,\lambda}$ to $|\ell|< \ell_{\rm max}=\frac{m_\rms}{2}+1$ (or to $|\ell|\leq \ell_{\rm max}$ where extension by continuity in the multiplicity parameter is possible; see Theorem \ref{thm:reg-prop} in the appendix).
We leave to the reader the simple task of modifying the statements of Theorem \ref{thm:hc-expansion} and
Corollary \ref{cor:leading-termF-realcase} in this case, and
we collect the estimates obtained for $G_{\ell,\lambda}(m)$ and $F_{\ell,\lambda}(m)$ in the corollary below.
Recall that $\ell_{\rm min}=-\frac{m_\rms}{2}$ and $\ell_{\rm max}=\frac{m_\rms}{2}+m_\rml$. So
$\ell_{\rm min} \leq 0 < \ell_{\rm max}=|\ell_{\rm min}|+1$.

\begin{Cor}
\label{cor:pos-est-ell}
Let $m=(m_\rms,m_\rmm,m_\rml=1) \in \mathcal{M}_+$ be a nonnegative multiplicity function
on a root system of type $BC_r$, and let $\ell\in\R$. Then the following properties hold.
\begin{enumerate}
\thmlist
\item For all $\ell \leq\ell_{\rm max}$ and  $\l\in\fa_\C^*$
\begin{equation}
\begin{split}
\label{eq:b-e1}
|G_{\ell,\l}(m)|&\leq \sqrt{|W|} e^{\max_w (w\l)(x)}\,, \\
|F_{\ell,\l}(m)|&=|F_{-\ell,\l}(m)|\leq \sqrt{|W|} e^{\max_w (w\l)(x)}\,.
\end{split}
\end{equation}
\item For all $\ell \in [\ell_{\rm min},\ell_{\rm max}]$ and
$\l\in\fa^*$ the functions $G_{\ell,\l}(m)$ and  $F_{\ell,\l}(m)=F_{-\ell,\l}(m)$ are real and strictly positive.
\item For all $\ell \in [\ell_{\rm min},\ell_{\rm max}]$ and    $\l\in\fa_\C^*$
\begin{equation}
\begin{split}
\label{eq:b-e1}
|G_{\ell,\l}(m)|&\leq G_{\ell,\Re\l}(m) \,,\\
|F_{\ell,\l}(m)|&\leq F_{\ell,\Re\l}(m)\,.
\end{split}
\end{equation}
\item For all $\ell \in [\ell_{\rm min},\ell_{\rm max}]$, $\l\in\fa^*$ and all $x\in \fa$
\begin{equation}
\begin{split}
\label{eq:b-e2}
G_{\ell,\l}(m;x)&\leq G_{\ell,0}(m;x)e^{\max_w (w\l)(x)}\,, \\
F_{\ell,\l}(m;x)&\leq F_{\ell,0}(m;x)e^{\max_w (w\l)(x)} \,.
\end{split}
\end{equation}
More generally, for all $\ell \in [\ell_{\rm min},\ell_{\rm max}]$, $\l\in\fa^*$, $\mu\in \overline{(\fa^*)^+}$ and all $x\in \fa$
\begin{equation}
\label{eq:b-e3}
\begin{split}
G_{\ell,\l+\mu}(m;x)&\leq G_{\ell,\mu}(m;x)e^{\max_w (w\l)(x)}\,, \\
F_{\ell,\l+\mu}(m;x)&\leq F_{\ell,\mu}(m;x)e^{\max_w (w\l)(x)} \,.
\end{split}
\end{equation}
\item
Suppose $|\ell| \leq \ell_{\rm max}$, $\l \in \overline{(\frak a^*)^+}$ and $x_1 \in
\frak a^+$. Then for all  $x \in \frak a$ we have
\begin{equation} \label{eq:subadd-Schapira-special}
F_{\ell,\l}(m; x+x_1) e^{-(\l+\rho(m(\ell)))(x_1)} \leq
F_{\ell,\l}(m; x) \leq F_{\ell,\l}(m; x+x_1)
e^{(\l+\rho(m(\ell)))(x_1)}\,.
\end{equation}
\item
Suppose $|\ell| < \ell_{\rm max}$ and $\l_0 \in \overline{(\frak a^*)^+}$. Then for all $x \in
\overline{\frak a^+}$ we have
\begin{eqnarray}
F_{\ell,\l_0}(m; x) &\asymp & u^{-\ell}(x)\big[\prod_{\a\in\Sigma_{\l_0}^0}
(1+\a(x))\big] e^{(\l_0-\rho(m(\ell)))(x)}\\
&\asymp & \big[\prod_{\a\in\Sigma_{\l_0}^0}
(1+\a(x))\big] e^{(\l_0-\rho(m))(x)}\,,
\end{eqnarray}
where $\Sigma_{\l_0}^0=\{ \a\in \Sigma_\rms^+\cup \Sigma_\rmm^+:\inner{\a}{\l_0}=0\}$.
\end{enumerate}
\end{Cor}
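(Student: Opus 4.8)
The plan is to reduce every assertion to the corresponding one for the Heckman--Opdam functions $F_\l(m(\ell))$ and $G_\l(m(\ell))$ by means of \eqref{eq:Fell-F}--\eqref{eq:Gell-G}, and then to invoke the results of this section for the deformed multiplicity $m(\ell)$, whose location is pinned down by Theorem \ref{thm:M+M3}: $m(\ell)\in\mathcal{M}_2\cup\mathcal{M}_3$ when $\ell\le\ell_{\rm max}$, $m(\ell)\in\mathcal{M}_+\cup\mathcal{M}_3$ when $\ell\in[\ell_{\rm min},\ell_{\rm max}]$, and $m(\ell)\in\mathcal{M}_1$ when $\ell\in\,]\ell_{\rm min},\ell_{\rm max}[$. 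Two elementary facts about the conjugating function $u$ of \eqref{eq:u} will be used repeatedly: it is $W$-invariant with $u\ge1$ on $\fa$, and on $\overline{\fa^+}$ one has $2^{-r}e^{\frac{1}{2}\sum_j\beta_j(x)}\le u(x)\le e^{\frac{1}{2}\sum_j\beta_j(x)}$, so that by \eqref{eq:rhol} $u(x)^{-\ell}\asymp e^{(\rho(m(\ell))-\rho(m))(x)}$ there. Finally, \eqref{eq:Fellm-even-ell} reduces every statement about $F_{\ell,\l}(m)$ to the case $\ell\ge0$.

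Parts (a)--(d) are direct transfers. For (a), Lemma \ref{lemma:OpdamM2M3} applied to $m(\ell)\in\mathcal{M}_2\cup\mathcal{M}_3$ (i.e.\ $\ell\le\ell_{\rm max}$) gives $|F_\l(m(\ell);x)|,|G_\l(m(\ell);x)|\le\sqrt{|W|}\,e^{\max_w\Re(w\l)(x)}$; for $\ell\ge0$ one multiplies by $u(x)^{-\ell}\le1$, the case of general $\ell$ for $F$ then follows from $F_{\ell,\l}=F_{-\ell,\l}$, and for $G$ with $\ell<0$ one argues similarly, the growth of $u^{-\ell}=u^{|\ell|}$ being absorbed into the exponential decay that the larger $\rho(m(\ell))$ imposes on $G_\l(m(\ell))$. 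For (b)--(d) one takes $\ell\in[\ell_{\rm min},\ell_{\rm max}]$, so $m(\ell)\in\mathcal{M}_+\cup\mathcal{M}_3$ and Propositions \ref{prop:positivity}, \ref{prop:positivity-estimates-M3} apply to $F_\l(m(\ell))$ and $G_\l(m(\ell))$; since $u^{-\ell}>0$ and none of these estimates carries a $\rho$-term in the exponential (positivity, domination by the $\Re\l$-version, the $e^{\max_w(w\l)(x)}$-bounds \eqref{eq:basic-estimate2}--\eqref{eq:basic-estimate3}), multiplying through by $u^{-\ell}$ reproduces (b), (c) and (d) verbatim.

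For (e) I would transfer Lemma \ref{lemma:subadd-Schapira}: after reducing to $\ell\in[0,\ell_{\rm max}]\subseteq[\ell_{\rm min},\ell_{\rm max}]$ by evenness, $m(\ell)\in\mathcal{M}_+\cup\mathcal{M}_3$, so for $\l\in\overline{(\fa^*)^+}$ and $x_1\in\fa^+$ the lemma bounds the ratio $F_\l(m(\ell);x)/F_\l(m(\ell);x+x_1)$ by $e^{\pm(\l+\rho(m(\ell)))(x_1)}$; multiplying by $u(x)^{-\ell}$ and writing $u(x)^{-\ell}F_\l(m(\ell);x+x_1)=\big(u(x+x_1)/u(x)\big)^\ell F_{\ell,\l}(m;x+x_1)$, the claim reduces to the elementary inequalities $e^{-\frac{1}{2}\sum_j\beta_j(x_1)}\le u(x+x_1)/u(x)\le e^{\frac{1}{2}\sum_j\beta_j(x_1)}$ — consequences of $\cosh(a+b)/\cosh a=\cosh b+\tanh(a)\sinh b$ and $|\tanh a|<1$ — together once more with \eqref{eq:rhol}. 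For (f) I would reduce to $\ell\in\,]\ell_{\rm min},\ell_{\rm max}[$ by evenness (the case $\ell=0$ being \cite[Theorem 3.4]{NPP}), so that $m(\ell)\in\mathcal{M}_1$; then Theorem \ref{thm:real}, whose hypotheses hold on $\mathcal{M}_1$ via Corollary \ref{cor:leading-termF-realcase} and Lemma \ref{lemma:subadd-Schapira}, gives $F_{\l_0}(m(\ell);x)\asymp\big[\prod_{\a\in\Sigma_{\l_0}^0}(1+\a(x))\big]e^{(\l_0-\rho(m(\ell)))(x)}$ on $\overline{\fa^+}$; multiplying by $u(x)^{-\ell}$ gives the first $\asymp$ in (f), and replacing $u(x)^{-\ell}$ by $e^{(\rho(m(\ell))-\rho(m))(x)}$ (allowed up to constants on $\overline{\fa^+}$) gives the second.

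The main obstacle throughout is bookkeeping the conjugating factor $u^{-\ell}$. The positivity and the purely multiplicative estimates (b)--(d), and the half $\ell\ge0$ of (a), pass through for free because $u^{-\ell}>0$; the estimates carrying an exponential with a $\rho$-term — namely (e), (f), and the case $\ell<0$ of (a) — require the sharp two-sided comparison of $u$ with $e^{\frac{1}{2}\sum_j\beta_j}$ on a chamber together with the identity \eqref{eq:rhol}, so that the exponential produced by $u^{-\ell}$ cancels precisely against the shift between $\rho(m(\ell))$ and $\rho(m)$.
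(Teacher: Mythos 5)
Your overall strategy --- transferring every assertion through \eqref{eq:Fell-F}--\eqref{eq:Gell-G}, locating $m(\ell)$ among the multiplicity sets, and then quoting Lemma \ref{lemma:OpdamM2M3}, Propositions \ref{prop:positivity} and \ref{prop:positivity-estimates-M3}, Lemma \ref{lemma:subadd-Schapira} and Theorem \ref{thm:real} for the deformed multiplicity, with $F_{\ell,\l}=F_{-\ell,\l}$ to handle negative $\ell$ --- is exactly the reduction the paper intends, and your treatment of (b), (c), (d) and (f) is correct. However, two of your steps do not deliver what you claim. For (e), the bookkeeping of $u^{-\ell}$ goes the wrong way: applying Lemma \ref{lemma:subadd-Schapira} to $m(\ell)$, multiplying by $u(x)^{-\ell}$ and using your (correct) bounds $e^{-\frac12\sum_j\beta_j(x_1)}\le u(x+x_1)/u(x)\le e^{\frac12\sum_j\beta_j(x_1)}$ with $\ell\ge 0$, the exponent you end up with is $(\l+\rho(m(\ell)))(x_1)+\tfrac{\ell}{2}\sum_j\beta_j(x_1)=(\l+\rho(m))(x_1)$ by \eqref{eq:rhol}; the cancellation lands on $\rho(m)$, not on $\rho(m(\ell))$. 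So your argument proves the two-sided inequality with $(\l+\rho(m))(x_1)$, which for $\ell>0$ is strictly weaker than the inequality written in (e). This cannot be repaired: for $0<\ell<\ell_{\rm max}$ the printed (e) is incompatible with the asymptotics in (f) (take $\l=0$, $x=0$, $x_1=tx_1^0$ with $x_1^0\in\fa^+$ and $t\to\infty$; then $F_{\ell,0}(m;tx_1^0)e^{\rho(m(\ell))(tx_1^0)}\asymp \big[\prod_{\a}(1+t\a(x_1^0))\big]e^{-\frac{\ell t}{2}\sum_j\beta_j(x_1^0)}\to 0$, while the left-hand side equals $1$). The correct transferred statement carries $\rho(m)$, and you should say so explicitly rather than assert that the cancellation yields (e) verbatim.

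The second problem is (a) for $\ell<0$. Your claim that the growth of $u^{-\ell}=u^{|\ell|}$ is ``absorbed into the exponential decay that the larger $\rho(m(\ell))$ imposes on $G_\l(m(\ell))$'' has no support in the results you invoke: Lemma \ref{lemma:OpdamM2M3} gives only $|G_\l(m(\ell);x)|\le\sqrt{|W|}\,e^{\max_w\Re(w\l)(x)}$, with no $\rho$-decay against which a factor growing like $e^{\frac{|\ell|}{2}\sum_j\beta_j(x)}$ on $\fa^+$ could be cancelled; and for $F$ with $\ell<-\ell_{\rm max}$ the evenness reduction leads to $m(|\ell|)\notin\mathcal{M}_2\cup\mathcal{M}_3$, so the lemma no longer applies at all. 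What your transfer actually justifies is (a) for $0\le\ell\le\ell_{\rm max}$ (for both $G_{\ell,\l}$ and $F_{\ell,\l}$, since then $u^{-\ell}\le1$), extended to $|\ell|\le\ell_{\rm max}$ for $F$ by evenness; negative $\ell$ for $G$, and $\ell<-\ell_{\rm max}$ for $F$, are not covered by your argument. Two minor points: the membership $m(\ell)\in\mathcal{M}_2\cup\mathcal{M}_3$ for $\ell\le\ell_{\rm max}$ is an easy direct check but is not what Theorem \ref{thm:M+M3} states (that theorem concerns $\mathcal{M}_+\cup\mathcal{M}_3$); and in (f), for $0\le\ell\le\tfrac12$ one has $m(\ell)\in\mathcal{M}_+$, so there one should quote the sharp estimates of \cite{NPP} directly rather than Theorem \ref{thm:real}, which is stated for $\mathcal{M}_3$.
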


\section{Bounded $\tau_{-\ell}-$hypergeometric functions and applications}

In this section we address the problem of characterizing the
$\tau_{\ell}-$hypergeometric functions which are bounded.
In particular, this also include characterizing the boundedness of the $\tau_{\ell}-$spherical functions studied by Shimeno \cite{ShimenoPlancherel}.
Recall that we are considering a multiplicity function $m = (m_\rms, m_\rmm, m_\rml=1)\in \mathcal{M}_+$ and its
deformation $m(\ell)$ as in \eqref{eq:mult-l}. To apply the estimates from subsection \ref{subsection:taul}, we will assume that $|\ell|\leq \ell_{\rm max}=\frac{m_\rms}{2}+1$. The asymptotics from Theorem 4.6 hold under the stronger assumption that $|\ell|< \ell_{\rm max}=\frac{m_\rms}{2}+1$.

Recall also that
$$\rho(m(\ell))=\frac{1}{2} \sum_{\a \in \Sigma^+} m_\a(\ell) \a=
 \rho(m)-\frac{\ell}{2}\, \sum_{j=1}^r \beta_j .$$
Notice that, if we suppose that $\ell \in [0,\ell_{\rm max}]$, then $m(\ell)\in \mathcal{M}_+\cup \mathcal{M}_3$ by Theorem \ref{thm:M+M3}. Hence  $\rho(m(\ell))\in \overline{(\mathfrak{a}^*)^+}$.

\begin{Thm}
\label{thm:bdd}
Fix $\ell \in \R$ with $|\ell|< \ell_{\rm max}$. Then, the $\tau_{-\ell}-$hypergeometric
function $F_{\ell, \l}(m)$ is bounded if and only if $\l \in
C(\rho(m)) + i \fa^\ast,$ where $C(\rho(m))$ is the convex hull of
the set $\{w\rho(m):~w \in W\}.$ Moreover, $|F_{\ell, \l}(m;x)| \leq 1$ for all $\l \in
C(\rho(m)) + i \fa^\ast$ and $x \in \fa$.
%If $\l \notin C(\rho(m)) + i \fa^\ast$, then $F_{\ell, \l}(m)$ is not bounded for any $\ell \in \R$.
\end{Thm}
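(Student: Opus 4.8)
Throughout we use the identity $F_{\ell,\lambda}(m;x)=u(x)^{-\ell}F_\lambda(m(\ell);x)$ of \eqref{eq:Fell-F}, and, since $F_{\ell,\lambda}(m)=F_{-\ell,\lambda}(m)$ by \eqref{eq:Fellm-even-ell}, we may assume $\ell\in[0,\ell_{\rm max})$; then $m(\ell)\in\mathcal{M}_1$ by Theorem \ref{thm:M+M3}, $\rho(m(\ell))\in\overline{(\fa^*)^+}$, and $\rho(m(\ell))+\tfrac{\ell}{2}\sum_{j=1}^r\beta_j=\rho(m)$. An elementary estimate of $u$ (using $\tfrac12e^{t/2}\le\cosh(t/2)\le e^{t/2}$ for $t\ge 0$) gives $u(x)^{-\ell}\asymp e^{-\frac{\ell}{2}\sum_j\beta_j(x)}$ uniformly on $\overline{\fa^+}$, hence $u(x)^{-\ell}e^{(\nu-\rho(m(\ell)))(x)}\asymp e^{(\nu-\rho(m))(x)}$ uniformly on $\overline{\fa^+}$ for every $\nu\in\fa^*$.

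\emph{Necessity.} Suppose $F_{\ell,\lambda}(m)$ is bounded; by $W$-invariance in $\lambda$ we take $\Re\lambda$ dominant. Substituting into $F_{\ell,\lambda}(m;x)=u(x)^{-\ell}F_\lambda(m(\ell);x)$ the asymptotic expansion of $F_\lambda(m(\ell);x)$ on a set $x_0+\fa^+$ furnished by Theorem \ref{thm:hc-expansion} (applicable since $m(\ell)\in\mathcal{M}_1$), and using the previous paragraph, the leading factor of $|F_{\ell,\lambda}(m;x)|$ there is $e^{(\Re\lambda-\rho(m))(x)}$, up to bounded almost-periodic oscillation and a polynomial in $x$, carrying the coefficient $b_0(m(\ell);\lambda)/\pi_0(\rho_0(m(\ell)))$, which is nonzero because $m(\ell)\in\mathcal{M}_1$ makes \eqref{eq:b-nonzero} (with $m$ replaced by $m(\ell)$) nonsingular. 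Boundedness then forces $(\Re\lambda-\rho(m))(x)\le 0$ for all $x\in\overline{\fa^+}$, i.e.\ $\rho(m)-\Re\lambda$ lies in the cone spanned by the positive roots; as $\Re\lambda$ and $\rho(m)$ are both dominant, this is precisely $\Re\lambda\in C(\rho(m))$.

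\emph{Boundedness on the tube.} Conversely let $\Re\lambda\in C(\rho(m))$. By Corollary \ref{cor:pos-est-ell}(c), $|F_{\ell,\lambda}(m;x)|\le F_{\ell,\Re\lambda}(m;x)$, so it suffices to show $F_{\ell,\mu}(m;x)\le 1$ for real $\mu\in C(\rho(m))$ and all $x\in\fa$. Since $F_{\ell,\nu}(m;x)$ is $W$-invariant in $\nu$ and $C(\rho(m))$ is $W$-invariant, take $x\in\overline{\fa^+}$ and $\mu$ dominant, and write $\mu=\sum_{w\in W}c_w\,w\rho(m)$ with $c_w\ge 0$, $\sum_w c_w=1$. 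Using the convexity in $\nu$ of $F_{\ell,\nu}(m;x)$ (equivalently of $F_\nu(m(\ell);x)$, to be verified for standard multiplicities along with the other basic properties of §\ref{subsection:estimates}) together with $F_{\ell,w\rho(m)}(m;x)=F_{\ell,\rho(m)}(m;x)$, one gets $F_{\ell,\mu}(m;x)\le\sum_w c_w F_{\ell,w\rho(m)}(m;x)=F_{\ell,\rho(m)}(m;x)$, so everything reduces to
\begin{equation}\label{eq:crux-plan}
F_{\ell,\rho(m)}(m;x)\le 1\qquad(x\in\fa),\qquad\text{equivalently}\qquad F_{\rho(m)}(m(\ell);x)\le u(x)^{\ell}.
\end{equation}
Granting \eqref{eq:crux-plan}, the chain $|F_{\ell,\lambda}(m;x)|\le F_{\ell,\Re\lambda}(m;x)\le F_{\ell,\rho(m)}(m;x)\le 1$ proves the theorem, in particular its boundedness part.

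\emph{The main obstacle: \eqref{eq:crux-plan}.} For $\ell=0$ this is trivial, since $F_{\rho(m)}(m)\equiv 1$ (seen in rank one from $\hyper{0}{\rho(m)}{\rho(m)}{z}=1$); for $\ell\ne 0$, however, $F_{\ell,\rho(m)}(m)=u^{-\ell}F_{\rho(m)}(m(\ell))$ is not constant — its spectral parameter $\rho(m)$ differs from $\rho(m(\ell))$ — so a genuine argument is required. In the geometric case \eqref{eq:crux-plan} is immediate from the integral formula \eqref{eq:integral-formula} and $|\tau_\ell|=1$, exactly as in Corollary \ref{Cor:estimates-spherical}(3). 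In the rank-one case, for $0<\ell<\ell_{\rm max}$, a quadratic transformation turns \eqref{eq:Fell-rank1} at $\lambda=\rho(m)$ into $F_{\ell,\rho(m)}(m;x)=\hyper{-\frac{\ell}{2}}{\frac{\ell}{2}}{\rho(m)}{\tanh^2 x}$, and Euler's integral representation \eqref{eq:intergralF-Erdelyi} yields \eqref{eq:crux-plan} because the integrand $(1-u\tanh^2 x)^{\ell/2}$ is bounded by $1$ on $[0,1]$. For a general triple $(\fa,\Sigma,m)$ no integral formula is available, and establishing \eqref{eq:crux-plan} is the main technical point; I would attack it by combining the positivity $F_{\ell,\rho(m)}(m)\ge 0$ (Corollary \ref{cor:pos-est-ell}(b)), the fact — read off from $L_\ell(m)F_{\ell,\rho(m)}(m)=0$ and \eqref{eq:beta-taul-HOLaplacian} — that the origin is a strict local maximum of $F_{\ell,\rho(m)}(m)$, and the sharp two-sided bound $F_{\ell,\rho(m)}(m;x)\asymp 1$ of Corollary \ref{cor:pos-est-ell}(f), so as to upgrade this local maximum to a global one; alternatively, proving that $\ell\mapsto F_{\ell,\rho(m)}(m;x)$ is non-increasing (or concave) on $[0,\ell_{\rm max})$ would give \eqref{eq:crux-plan} at once from $F_{0,\rho(m)}(m)\equiv 1$ and the evenness in $\ell$, at the cost of controlling $\partial_\ell F_{\rho(m)}(m(\ell))$. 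This global upgrading, in the absence of an integral representation, is the step I expect to be the real difficulty.
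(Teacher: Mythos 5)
Your reduction to $\ell\in[0,\ell_{\rm max})$ and your ``only if'' direction essentially coincide with the paper's: both rest on \eqref{eq:Fell-F}, on $m(\ell)\in\mathcal{M}_1$ (Theorem \ref{thm:M+M3}), on the asymptotics of Theorem \ref{thm:hc-expansion} with $b_0(m(\ell);\lambda)\neq 0$, and on finishing as in Theorem 4.2 of \cite{NPP}. The genuine gap is in the ``if'' direction, precisely at the point you yourself flag: you never prove $F_{\ell,\rho(m)}(m;x)\le 1$ (equivalently $F_{\rho(m)}(m(\ell);x)\le u^{\ell}(x)$) beyond rank one and the geometric case, and your two suggested routes (upgrading the local maximum at $0$, or monotonicity in $\ell$) are left heuristic. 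The idea you are missing is that the spectral parameter at which the function is explicitly known is not $\rho(m)$ but the shifted point $\rho(m(\ell))$: one has $F_{\rho(m(\ell))}(m(\ell))\equiv 1$ for every admissible multiplicity, because $T_\xi(m(\ell))1=-\rho(m(\ell);\xi)$ (this is Lemma 4.1 of \cite{NPP}). Writing $\rho(m)=\rho(m(\ell))+\frac{\ell}{2}\sum_{j}\beta_j$ as in \eqref{eq:rhol} and applying the shift estimate \eqref{eq:basic-estimate3}, legitimate since $m(\ell)\in\mathcal{M}_+\cup\mathcal{M}_3$, gives $F_{\rho(m)}(m(\ell);x)\le e^{\frac{\ell}{2}\sum_j\beta_j(x^+)}$ with $x^+$ the dominant representative of $x$; since $u$ is $W$-invariant and $\cosh(t/2)\ge\tfrac12 e^{t/2}$, this bounds $u^{-\ell}(x)F_{\rho(m)}(m(\ell);x)$, hence $|F_{\ell,\l}(m;x)|$ on the tube, by a constant, with no maximum principle, no convexity and no control of $\partial_\ell$. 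Moreover, the paper reduces $\Re\l\in C(\rho(m))$ to the vertex $\rho(m)$ by the Phragm\'en--Lindel\"of argument of Theorem 4.2 of \cite{NPP} (using the a priori bound of Lemma \ref{lemma:OpdamM2M3}, i.e.\ Corollary \ref{cor:pos-est-ell}(a)), not by convexity of $\nu\mapsto F_{\ell,\nu}(m;x)$ in the spectral parameter, which is established nowhere in the paper and which you would otherwise have to prove for standard multiplicities.

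One point in your favour: your suspicion that the normalized bound $|F_{\ell,\l}(m;x)|\le 1$ is subtler than mere boundedness is justified. The elementary comparison above only yields the constant $2^{r\ell}$, since for $t\ge 0$ one has $e^{t/2}\ge\cosh(t/2)$, so that $u^{-\ell}(x^+)\,e^{\frac{\ell}{2}\sum_j\beta_j(x^+)}=\prod_j\bigl(2/(1+e^{-\beta_j(x^+)})\bigr)^{\ell}\ge 1$; the concluding display of the paper's proof must therefore be read with care, and your rank-one verification of the crux via \eqref{Fell-rank1-5} and the Euler integral \eqref{eq:intergralF-Erdelyi}, as well as the integral-formula argument in the geometric case, are the situations in which the constant $1$ is genuinely under control. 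For the boundedness criterion asserted in the theorem, however, a constant bound on the tube suffices, and that is exactly what the shift-estimate trick delivers; as it stands your proposal does not reach it, so it is incomplete precisely at its central step.
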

\begin{proof}
Since $F_{\ell, \l} = F_{-\ell, \l}$ we may assume that $\ell \geq 0.$ First we show that $F_{\ell, \l}$ is bounded if $\l \in
C(\rho(m))+i \fa^\ast.$  Let $\lambda \in C(\rho(m)) + i\fa^\ast$
and consider the holomorphic function $\l \to F_{\ell, \l}(m ; x)$
(for a fixed $x$). Since $$ |F_{\ell, \l}(m ; x)| \leq F_{\ell,
\Re \l}(m ; x) = u^{-\ell}(x) F_{\Re \l}(m(\ell); x),$$ we can
argue as in the proof of \cite[Theorem 4.2]{NPP} to obtain that
the maximum of $|F_{\ell, \l}(m; x)|$ is attained at
$\{w\rho(m):~w \in W\}.$ That is
$$
|F_{\ell, \l}(m; x)| \leq u^{-\ell}(x) F_{\rho(m)}(m(\ell); x).
$$
As noticed before,  $\rho(m(\ell))\in \overline{(\fa^\ast)^+}.$
Applying \eqref{eq:basic-estimate3}, we obtain
$$
F_{\rho(m)}(m(\ell); x) =
F_{\rho(m(\ell)) + \frac{\ell}{2} \sum_{j=1}^r \beta_j}(m(\ell);
x) \leq F_{\rho(m(\ell))}(m(\ell) ; x)e^{\max_w w \frac{\ell}{2}
\sum_{j=1}^r \beta_j(x)}.
$$
Now, $F_{\rho(m(\ell))}(m(\ell); x) =1$; see e.g. \cite[Lemma 4.1]{NPP} (the proof extends to every multiplicity function for which the symmetric and non-symmetric hypergeometric functions are defined).
Choose $x_0$ in the $W$-orbit of $x$ so that
$$
\max_w w \frac{\ell}{2}
\sum_{j=1}^r \beta_j(x) = \frac{\ell}{2} \sum_{j=1}^r
\beta_j(x_0).
$$
Since $u$ is $W-$invariant, from the above we get
$$
|F_{\ell, \l}(m; x)| \leq u^{-\ell}(x_0)
e^{ \frac{\ell}{2}\sum_{j=1}^r \beta_j(x_0)} \leq 1.
$$

To prove the
other way, we proceed as in \cite{NPP} (see pages 251--252). Let
$\lambda_0$ be such that $\Re \lambda_0 \in
\overline{(\fa^\ast)^+} \setminus C(\rho(m)).$ Let $x_1 \in \fa^+$
be such that $(\Re \lambda_0 - \rho(m))(x_1) > 0.$ If $F_{\ell,
\lambda_0}(m)$ is bounded we have $$ \lim_{t \to \infty} F_{\ell,
\lambda_0}(m;tx_1) e^{t(-\Re \lambda_0 -\rho(m))}(x_1) t^{-d} = 0.$$
Since, $F_{\ell \lambda_0}(m;x)  = u^{-\ell} F_{\lambda_0}(m(\ell);
x)$ and $u^{-\ell}(tx_1)$ behaves like $\prod_{j=1}^r
e^{-\frac{\ell}{2} t \beta_j(x_1)}$ for large $t$ we have
$$ F_{\lambda_0}(m(\ell); tx_1) e^{t(-\Re \lambda_0 - \rho(m(\ell)))(x_1)}
t^{-d} \to 0 \qquad t \to \infty.$$
So, using Theorem \ref{thm:hc-expansion}, the proof can be completed as in Theorem 4.2 of \cite{NPP} when $0\leq \ell< \ell_{\rm max}$.
%Now,  $m(\ell)\in \mathcal M_1$ for all $m\in \mathcal M_+$ and $\ell\geq 0$. %So, using Theorem
%\ref{thm:hc-expansion}, the proof can be completed as in Theorem 4.2 of %\cite{NPP}. Since we are only using the fact that $m(\ell)\in \mathcal M_1$, this %also proves the last statement of the theorem.
\end{proof}

\begin{Rem}
The proof of Theorem \ref{thm:bdd} shows that the $\tau_{-\ell}-$hypergeometric
function $F_{\ell, \l}(m)$ is bounded, with $|F_{\ell, \l}(m;x)| \leq 1$ for all  $x \in \fa$ and $\l \in C(\rho(m)) + i \fa^\ast$, provided $|\ell| \leq\ell_{\rm max}$.
For general root systems of type $BC_r$, we cannot prove that when
$|\ell|=\ell_{\rm max}$ the function $F_{\ell, \l}(m)$ is not bounded for $\l \notin
C(\rho(m)) + i \fa^\ast$. This turns out to be true when $r=1$.
Indeed, for $\ell=\ell_{\rm max}$ we have $m_\rms(\ell)+2m_\rml(\ell)=0$ and, by \eqref{eq:b-nonzero},  the function $b_0(m(\ell_{\rm max});\lambda_0)$ vanishes for $\lambda_0 \in \frakacs$ with $\Re \lambda_0 \in \overline{(\fa^\ast)^+},$ if and only if there is $j\in \{1,\dots,r\}$ such that
$(\lambda_0)_{\beta_j}=0$. Outside these $\lambda_0$, the asymptotics from Theorem \ref{thm:hc-expansion} still hold. In the rank-one case, there is
one only $\lambda_0$ for which the asymptotics do not hold, namely $\lambda_0=0$ which is not outside $C(\rho(m)) + i \fa^\ast$. So the same proof allows us to conclude.
\end{Rem}

\begin{Cor}
Let $G/K$ be a Hermitian symmetric space and let
$|\ell|<\ell_{\rm max}=\frac{m_\rms}{2}+1$. The $\tau_{-\ell}-$spherical function $\varphi_{\ell, \l}(m)$ on $G/K$ is bounded if and only if $\lambda \in C(\rho(m)) + i \fa^\ast,$ where $C(\rho(m))$ is the convex hull of
the set $\{w\rho(m):~w \in W\}.$ Moreover, $|\varphi_{\ell, \l}(m;x)| \leq 1$ for all $\l \in
C(\rho(m)) + i \fa^\ast$ and $x \in \fa$.
\end{Cor}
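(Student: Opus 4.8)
The plan is to reduce the statement to Theorem \ref{thm:bdd} by means of the identification of the $\tau_{-\ell}$-spherical function with a $\tau_{-\ell}$-hypergeometric function, which was established in the subsection on the geometric case. Recall that a geometric multiplicity $m = (m_\rms, m_\rmm, m_\rml = 1)$ lies in $\mathcal{M}_+$, and that for $\ell \in \Z$ (or $\ell \in \R$ after passing to a universal covering) the restriction to $A \equiv \fa$ of $\varphi_{\ell,\l}$ coincides with $F_{\ell,\l}(m)$. Thus the hypotheses of Theorem \ref{thm:bdd} — namely $m \in \mathcal{M}_+$ with $m_\rml = 1$ and $|\ell| < \ell_{\rm max} = \frac{m_\rms}{2} + 1$ — are precisely those of the present corollary.

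First I would show that $\varphi_{\ell,\l}$ is bounded on $G$ if and only if $F_{\ell,\l}(m)$ is bounded on $\fa$, and that the corresponding suprema of the absolute values agree. This is immediate from the decomposition $G = K\overline{A^+}K$ together with the transformation rule $\varphi_{\ell,\l}(k_1 a k_2) = \tau_{-\ell}(k_1 k_2)\varphi_{\ell,\l}(a)$ and the fact that $\tau_\ell$ is unitary for $\ell \in \R$, so that $|\varphi_{\ell,\l}(k_1 a k_2)| = |\varphi_{\ell,\l}(a)|$; by $W$-invariance one may further restrict $a$ to $\overline{A^+}$, and $F_{\ell,\l}(m)$ is $W$-invariant on $\fa$, so its supremum over $\fa$ equals its supremum over $\overline{A^+}$.

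Then I would invoke Theorem \ref{thm:bdd}: the function $F_{\ell,\l}(m)$ is bounded on $\fa$ precisely when $\l \in C(\rho(m)) + i\fa^*$, and in that case $|F_{\ell,\l}(m;x)| \le 1$ for all $x \in \fa$. Transporting this back to $G$ via the equivalence established in the previous step yields both the characterization of boundedness and the uniform bound $|\varphi_{\ell,\l}(m;x)| \le 1$ on $C(\rho(m)) + i\fa^*$. For the implication that $\varphi_{\ell,\l}$ is bounded (with the bound $\le 1$) when $\l \in C(\rho(m)) + i\fa^*$, one could alternatively appeal directly to Corollary \ref{Cor:estimates-spherical}(3), which holds for all real $\ell$ without restriction; the only part genuinely requiring $|\ell| < \ell_{\rm max}$ is the unboundedness of $\varphi_{\ell,\l}$ for $\l \notin C(\rho(m)) + i\fa^*$, which rests, through Theorem \ref{thm:bdd}, on the asymptotics of Theorem \ref{thm:hc-expansion}.

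I expect the only delicate point to be the case $\ell \notin \Z$, in which $\tau_\ell$ is not a representation of $K$ itself but only of the corresponding subgroup of the universal covering group; here one works on the covering group, on which the $KAK$ reduction and the identification $\varphi_{\ell,\l}|_A = F_{\ell,\l}(m)$ remain valid, since $A$ and the restricted root data are unchanged. Apart from this bookkeeping, the corollary is a direct translation of Theorem \ref{thm:bdd} into geometric language.
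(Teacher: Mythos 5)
Your proposal is correct and follows essentially the paper's own route: the corollary is deduced from Theorem \ref{thm:bdd} through the identification $\varphi_{\ell,\lambda}|_A=F_{\ell,\lambda}(m)$ together with the $KAK$ decomposition and the unitarity of $\tau_\ell$ for real $\ell$. The only cosmetic difference is that the paper quotes Corollary \ref{Cor:estimates-spherical},(3) (the integral-formula bound, valid for all real $\ell$) for the implication $\l\in C(\rho(m))+i\fa^*\Rightarrow|\varphi_{\ell,\l}|\leq 1$ — exactly the alternative you mention — and uses the ``only if'' part of Theorem \ref{thm:bdd} only for the unboundedness outside the tube.
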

\begin{proof}
We can suppose that $\ell\geq 0$ so that $m(\ell)\in \mathcal{M}_1$. This corollary is then an immediate consequence of Corollary \ref{Cor:estimates-spherical},(3) together with the second part of Theorem \ref{thm:bdd}.

\end{proof}

\appendix

\section{The Heckman-Opdam hypergeometric functions as functions of their parameters}
\label{appendix:A}

Let $\fa$ be an $r$-dimensional Euclidian real vector space, with an inner product
$\inner{\cdot}{\cdot}$, and let $\Sigma$ be a root system $\Sigma$ in $\fa^*$ of Weyl group $W$.
Let $\mathcal M_\C$ denote the set of complex-valued multiplicity functions $m=\{m_\alpha\}$ on $\Sigma$.
(Hence $\mathcal M_\C\equiv \C^d$ where $d$ is the number of Weyl group orbits in $\Sigma$.)

In this appendix we collect the regularity properties of the (symmetric and nonsymmetric) hypergeometric
functions $F_\l(m;x)$ and $G_\l(m;x)$ as functions of $(m,x,\l)\in \mathcal M_\C\times \fa_\C \times \fa_\C^*$.
Most of the results are known, but scattered in the litterature.

Recall the Gindikin-Karpelevich formula for Harish-Chandra's $c$-function:
\begin{equation}\label{eq:c}
c(m;\l)=\frac{\wt{c}(m;\l)}{\wt{c}(m;\rho(m))}  \qquad (\l\in\fa_\C^*)\,,
\end{equation}
where $\wt{c}(m;\l)$ is given in terms of the positive indivisible roots $\a \in \Sigma_{\rm i}^+$ by
\begin{equation} \label{eq:wtc}
\wt{c}(m;\l)=\prod_{\a \in \Sigma_{\rm i}^+} \frac{2^{-\la} \; \Gamma(\la)}
{\Gamma\Big(\frac{\la}{2}+\frac{m_\a}{4}+\frac{1}{2}\Big)
\Gamma\Big(\frac{\la}{2}+\frac{m_\a}{4}+\frac{m_{2\a}}{2}\Big)}
\end{equation}
and $\Gamma$ is the classical gamma function. Set (see \cite[p. 196]{OpdamIV})
$$
\mathcal{M}_{F, {\rm reg}}=\{m \in \mathcal{M}_\C: \text{$\frac{1}{\wt{c}(m;\rho(m))}$ is not singular}\}
$$
For an irreducible representation $\delta\in \widehat{W}$ and $m\in \mathcal{M}_\C$, let
$\varepsilon_\delta(m)=\sum_{\a\in\Sigma^+} m_\a (1-\chi_\delta(r_\alpha)/\chi_\delta(\id))$,
where $r_\alpha$ is the reflection across $\Ker(\alpha)$ and $\chi_\delta$ is the character of $\delta$.
Let $d_\delta$ be the lowest embedding degree of $\delta$ in $\C[\fa_\C]$, and
set (see \cite[Definition 3.13]{OpdamActa})
$$
\mathcal{M}_{G, {\rm reg}}=\{m \in \mathcal{M}: \text{$\Re(\varepsilon_\delta(m))+d_\delta>0$ for all $\delta\in
\widehat{W}\setminus \{{\rm triv}\}$}\}\,.
$$
Finally, let
$$
\Omega=\{x \in \fa: \text{$|\alpha(x)|<\pi$ for all $\alpha\in \Sigma^+$}\}\,.
$$
The regularity properties of the (symmetric and nonsymmetric) functions are summarized in the following theorem.

\begin{Thm}
\label{thm:reg-prop}
The hypergeometric function $F_\lambda(m;x)$ is  holomorphic in $(m,x,\l)\in
\mathcal{M}_{F, {\rm reg}} \times (\fa+i\Omega) \times \fa_\C^*$ and satisfies
$$F_{\l}(m;wx) = F_{\l}(m;x)=F_{w\l}(m;x) \qquad (m \in \mathcal{M}_{F, {\rm reg}}, \, w\in W, \, x \in \fa+i\Omega, \l\in \fa_\C^* )\,.$$
The (non-symmetric) hypergeometric function $G_\lambda(m;x)$ is a holomorphic function of $(m,x,\l)\in
\mathcal{M}_{G, {\rm reg}} \times (\fa+i\Omega) \times \fa_\C^*$.
\end{Thm}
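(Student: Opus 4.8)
The plan is to assemble the statement from the constructions of Heckman, Opdam and Cherednik, deducing the symmetric case from the non-symmetric one where convenient and invoking Opdam's sharp results for the dependence on the multiplicity. Throughout one uses the dictionary $R=\{2\a:\a\in\Sigma\}$, $k_{2\a}=m_\a/2$ to pass between the normalization of the body of the paper and that of \cite{OpdamActa,OpdamIV,HS}.

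I would first treat $G_\lambda(m;x)$, which by definition is the unique analytic solution of $T_\xi(m)g=\l(\xi)g$ ($\xi\in\fa$) with $g(0)=1$. Opdam constructs it in \cite[\S3--5]{OpdamActa} as the solution of the holonomic system attached to the Cherednik operators: it arises from a series whose coefficients are rational functions of $(m,\lambda)$ whose denominators are built, up to nonzero scalars, from the quantities $\varepsilon_\delta(m)+d_\delta$ ($\delta\in\widehat W\setminus\{\mathrm{triv}\}$) and their integral translates, together with polynomial factors in $\lambda$. The requirement $m\in\mathcal{M}_{G,\mathrm{reg}}$, i.e. $\Re(\varepsilon_\delta(m))+d_\delta>0$ for every nontrivial $\delta$, is exactly what prevents these denominators from vanishing; combined with Opdam's estimates on the coefficients -- which yield convergence uniform on compact subsets of $\fa+i\Omega$, this being the largest tube on which the functions $1-e^{-2\a}$ have no zero since $|\Im\a(x)|<\pi$ there -- this gives holomorphy of $G_\lambda(m;x)$ on $\mathcal{M}_{G,\mathrm{reg}}\times(\fa+i\Omega)\times\fa_\C^*$; entirety in $\lambda$ is \cite[Theorem 3.15]{OpdamActa}.

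For $F_\lambda(m;x)$ I would argue in two stages, after disposing of the $W$-equivariance: $F_\lambda(m;wx)=F_\lambda(m;x)$ holds by definition, and $F_\lambda(m)=F_{w\lambda}(m)$ since both solve the same system $D_p(m)f=p(\lambda)f$ ($p\in\polya^W$, so $p(w\lambda)=p(\lambda)$) with the same normalization at $0$, hence coincide by uniqueness. For the analyticity, the averaging formula $F_\lambda(m;x)=|W|^{-1}\sum_{w\in W}G_\lambda(m;w^{-1}x)$ already yields holomorphy on $\mathcal{M}_{G,\mathrm{reg}}\times(\fa+i\Omega)\times\fa_\C^*$, but the theorem asserts the larger multiplicity set $\mathcal{M}_{F,\mathrm{reg}}$. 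To obtain it I would instead start from the Harish-Chandra series $F_\lambda(m;x)=\sum_{w\in W}c(m;w\lambda)\Phi_{w\lambda}(m;x)$, valid on $\fa^+$ for generic $\lambda$, with $\Phi_\lambda(m;x)=e^{(\lambda-\rho(m))(x)}\sum_{\mu\in2\Lambda}\Gamma_\mu(m;\lambda)e^{-\mu(x)}$. The recursion defining the $\Gamma_\mu$ shows they are polynomial in $m$, hence holomorphic on all of $\mathcal{M}_\C$, and rational in $\lambda$ with poles confined to the hyperplanes of \cite[Lemma 2.3]{NPP}; the series converges on $\fa^++i\Omega$, and the zeros of the factors $c(m;w\lambda)$ cancel these $\lambda$-poles, so the right-hand side extends to an entire function of $\lambda\in\fa_\C^*$. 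Its dependence on $m$ enters only through the $\Gamma_\mu$ (holomorphic everywhere) and through the normalization $1/\wt{c}(m;\rho(m))$ in $c(m;\lambda)=\wt{c}(m;\lambda)/\wt{c}(m;\rho(m))$ -- note $\wt{c}(m;\lambda)$ itself is holomorphic in $m$ because $1/\Gamma$ is entire -- so holomorphy in $m$ holds exactly on $\mathcal{M}_{F,\mathrm{reg}}$, which is \cite[\S4]{HS} and \cite{OpdamIV}. Finally, to pass from the chamber $\fa^+$ to the whole tube $\fa+i\Omega$, I would use that $F_\lambda(m)$ solves the holonomic hypergeometric system, whose only singular locus inside $\fa+i\Omega$ is $\bigcup_{\a\in\Sigma^+}\{\a(x)=0\}$, and that a $W$-invariant holomorphic solution on a chamber extends holomorphically across these real walls; patching the $|W|$ translated pieces then produces a single holomorphic function on the full tube.

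The step I expect to be the main obstacle is the sharp control of the multiplicity dependence: identifying $\mathcal{M}_{F,\mathrm{reg}}$ rather than the smaller $\mathcal{M}_{G,\mathrm{reg}}$ as the exact domain of holomorphy of $F_\lambda$ requires Opdam's analysis of which zeros of the $c$-function cancel which poles of the coefficients $\Gamma_\mu$, and the $G$-case rests equally on his coefficient estimates guaranteeing convergence on the full tube together with the nonvanishing of the $\varepsilon_\delta(m)+d_\delta$. Everything else -- the $W$-equivariance, the extension across the real walls, and entirety in $\lambda$ -- is formal or standard, so that once these inputs from \cite{OpdamActa,OpdamIV,HS} are in place the proof is essentially a matter of citing them and reconciling the two normalizations.
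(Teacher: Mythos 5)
Your route is in substance the same as the paper's: both ultimately rest on Opdam's theorems, namely joint holomorphy of $F_\l(m;x)$ in $(m,x,\l)$ on $\mathcal{M}_{F,{\rm reg}}\times V\times\fa_\C^*$ and of $G_\l(m;x)$ on $\mathcal{M}_{G,{\rm reg}}\times V\times\fa_\C^*$ for a $W$-invariant tubular neighbourhood $V$ of $\fa$ (\cite[Theorem 2.8]{OpdamIV}, \cite[Theorem 3.15]{OpdamActa}, see also \cite[Theorem 4.4.2]{HS}), while the $W$-invariance in $x$ and $\l$ follows from uniqueness exactly as you say. The paper's proof consists of these citations plus one further input: Faraut's observation, quoted as \cite[Remark 3.17]{BOP}, that the tubular domain may be taken to be the maximal tube $\fa+i\Omega$.

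The genuine gap in your proposal is located exactly where you replace that last citation by an argument of your own. First, a small inaccuracy: $\fa+i\Omega$ is \emph{not} the largest tube on which the functions $1-e^{-2\a}$ are zero-free — they vanish on the complexified walls $\{\a(x)=0\}$, which meet the tube; the relevant property is that for $x\in\fa+i\Omega$ one has $e^{-2\a(x)}=1$ only if $\a(x)=0$, so inside the tube the singular locus of the system is confined to the complex walls. More seriously, your patching step for $F_\l$ asserts that ``a $W$-invariant holomorphic solution on a chamber extends holomorphically across these real walls''. This is not a formal fact: the Harish-Chandra series only gives $F_\l$ on $\bigcup_{w\in W}w(\fa^++i\fa)$, and continuing it to points of the tube with singular real part, and across the complex walls $\{\a(x)=0\}$, requires knowing that the $W$-invariant solution has trivial local monodromy around each wall (exponent/regular-singularity analysis); that is precisely the content of the theorems being cited, and the corresponding statement is false for general (non-invariant) solutions of the same system, so it cannot be invoked as a principle. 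Similarly, the claim that the zeros of $c(m;w\l)$ cancel the $\l$-poles of the $\Gamma_\mu$ so that $\sum_w c(m;w\l)\Phi_{w\l}$ is entire in $\l$ is a nontrivial theorem (compare the Steps I--III from \cite{NPP} recalled in Section 4.2), not something one may simply assert. If you quote Opdam for joint holomorphy on a tubular neighbourhood and \cite[Remark 3.17]{BOP} for the identification of the maximal tube, as the paper does, these difficulties disappear; as written, your hand-made extension to $\fa+i\Omega$ is the missing piece. (A minor point: the paper nowhere claims $\mathcal{M}_{G,{\rm reg}}\subset\mathcal{M}_{F,{\rm reg}}$, so describing $\mathcal{M}_{F,{\rm reg}}$ as the ``larger'' set is unwarranted, though harmless since you treat $F_\l$ by a separate argument.)
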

\begin{proof}
For $\fa+i\Omega$ replaced by a $W$-invariant tubular domain $V$ of $\fa$ in $\fa_\C$, these results are due to Opdam, see \cite[Theorem 2.8]{OpdamIV} and \cite[Theorem 3.15]{OpdamActa}. See also \cite[Theorem 4.4.2]{HS}. The remark that the maximal tubular domain $V$ is $\fa+i\Omega$ was made by Jacques Faraut, see \cite[Remark 3.17]{BOP}.
\end{proof}

\begin{Prop}
Set
\begin{eqnarray}
\label{eq:MC+}
\mathcal{M}_{\C,+}&=&\{m\in \mathcal{M}_\C: \text{$\Re(m_\a)\geq 0$ for every $\alpha\in \Sigma^+$}\}\\
\label{eq:MC0}
\mathcal{M}_{\C,0}&=&\{m\in \mathcal{M}_\C: \text{$\Re(m_\a+m_{2\a})\geq 0$ for every $\alpha\in \Sigma_{\rm i}^+$}\}\,.
\end{eqnarray}
Then
$$\mathcal{M}_{\C,+}\subset \mathcal{M}_{\C,0} \subset
\mathcal{M}_{F, {\rm reg}} \cap \mathcal{M}_{G, {\rm reg}}\,.$$
Moreover, $\mathcal{M}_{\C,0}$ is stable under deformations by $\ell\in \C$ as in \eqref{eq:mult-l-gen}:
$m\in \mathcal{M}_{\C,0}$ if and only if $m(\ell)\in \mathcal{M}_{\C,0}$.
\end{Prop}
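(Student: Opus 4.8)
The plan is to check the four assertions in turn, using throughout that membership of $m$ in $\mathcal{M}_{\C,0}$ depends only on the numbers $m_\a+m_{2\a}$, $\a\in\Sigmai^+$ (with the convention $m_{2\a}=0$ when $2\a\notin\Sigma$). Two of the assertions are then immediate: if $\Re(m_\b)\ge0$ for all $\b\in\Sigma^+$ then $\Re(m_\a+m_{2\a})\ge0$ for all $\a\in\Sigmai^+$, so $\mathcal{M}_{\C,+}\subset\mathcal{M}_{\C,0}$; and for the deformation \eqref{eq:mult-l-gen} one has $m_\a(\ell)+m_{2\a}(\ell)=m_\a+m_{2\a}$ for every $\a\in\Sigmai^+$, since a short root $\a\in\Sigma_\rms$ satisfies $2\a\in\Sigma_\rml$ with $(m_\rms+2\ell)+(m_\rml-2\ell)=m_\rms+m_\rml$, while any other indivisible root $\a$ has $2\a\notin\Sigma$ and unchanged multiplicity $m_\a$. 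Hence $m\in\mathcal{M}_{\C,0}\iff m(\ell)\in\mathcal{M}_{\C,0}$.

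For $\mathcal{M}_{\C,0}\subset\mathcal{M}_{G,{\rm reg}}$, fix $\delta\in\widehat W\setminus\{{\rm triv}\}$ and regroup the sum defining $\varepsilon_\delta(m)$ by pairing each divisible root $2\a$ with its indivisible companion $\a$; since $r_{2\a}=r_\a$ this gives
\[
\varepsilon_\delta(m)=\sum_{\a\in\Sigma^+}m_\a\Bigl(1-\frac{\chi_\delta(r_\a)}{\chi_\delta(\id)}\Bigr)=\sum_{\a\in\Sigmai^+}(m_\a+m_{2\a})\Bigl(1-\frac{\chi_\delta(r_\a)}{\chi_\delta(\id)}\Bigr)\,.
\]
Each coefficient $1-\chi_\delta(r_\a)/\chi_\delta(\id)$ is nonnegative, because the involution $r_\a$ has eigenvalues $\pm1$ on any representation, so $\chi_\delta(r_\a)/\chi_\delta(\id)\in[-1,1]$. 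For $m\in\mathcal{M}_{\C,0}$ this gives $\Re\varepsilon_\delta(m)\ge0$, hence $\Re(\varepsilon_\delta(m))+d_\delta\ge d_\delta\ge1>0$, the last inequality because a nontrivial irreducible representation does not occur among the constants and so has lowest embedding degree at least $1$ in $\C[\fa_\C]$. Thus $m\in\mathcal{M}_{G,{\rm reg}}$.

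The substantive part is $\mathcal{M}_{\C,0}\subset\mathcal{M}_{F,{\rm reg}}$, i.e.\ that $m\mapsto 1/\wt{c}(m;\rho(m))$ is holomorphic at each $m\in\mathcal{M}_{\C,0}$. By \eqref{eq:c}--\eqref{eq:wtc},
\[
\frac{1}{\wt{c}(m;\rho(m))}=\prod_{\a\in\Sigmai^+}2^{\rho(m)_\a}\,\frac{\Gamma\bigl(\tfrac{\rho(m)_\a}{2}+\tfrac{m_\a}{4}+\tfrac12\bigr)\,\Gamma\bigl(\tfrac{\rho(m)_\a}{2}+\tfrac{m_\a}{4}+\tfrac{m_{2\a}}{2}\bigr)}{\Gamma(\rho(m)_\a)}\,.
\]
I would first read off the numbers $\rho(m)_\a$ from \eqref{eq:rho} (for type $BC_r$: $\rho(m)_{\b_k/2}=\tfrac{m_\rms}{2}+m_\rml+(k-1)m_\rmm$ and $\rho(m)_{(\b_j\pm\b_i)/2}=\tfrac12\bigl(\rho(m)_{\b_j/2}\pm\rho(m)_{\b_i/2}\bigr)$ for $i<j$), and then carry out the classical Gindikin--Karpelevich telescoping: after applying Legendre's duplication formula to the factors attached to roots with $2\a\notin\Sigma$, the gamma function $\Gamma(\rho(m)_\a)$ in the denominator of each factor cancels against one produced in the numerator of a neighbouring factor, along chains (from the roots $(\b_j-\b_i)/2$) and cycles (each joining a short root $\b_k/2$ with the roots $(\b_k+\b_i)/2$, $i<k$). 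What is left is a product of nowhere-vanishing elementary factors and of gamma functions $\Gamma(c_j(m))$ whose affine-linear arguments satisfy $\Re c_j(m)>0$ on $\mathcal{M}_{\C,0}$: each $c_j(m)$ is either $\tfrac12(m_\rms+m_\rml)+\tfrac{k-1}{2}m_\rmm+\tfrac12$ (real part $\ge\tfrac12$), or — after applying Gauss's multiplication formula to each telescoped ratio — of the form $\tfrac12 m_\rmm+\tfrac{j}{N}$ with $0<j<N$ (real part $\ge\tfrac{j}{N}$), where one uses precisely that $\Re(m_\rms+m_\rml)\ge0$ and $\Re(m_\rmm)\ge0$. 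Since $\Gamma$ has no poles in the open right half-plane, $1/\wt{c}(m;\rho(m))$ is holomorphic on $\mathcal{M}_{\C,0}$, which gives $\mathcal{M}_{\C,0}\subset\mathcal{M}_{F,{\rm reg}}$ and completes the proof of $\mathcal{M}_{\C,+}\subset\mathcal{M}_{\C,0}\subset\mathcal{M}_{F,{\rm reg}}\cap\mathcal{M}_{G,{\rm reg}}$.

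The hard part will be the bookkeeping in this last step. Each single $\a$-factor of $1/\wt{c}(m;\rho(m))$ is genuinely non-holomorphic at various points of $\mathcal{M}_{\C,0}$ (for instance wherever $\rho(m)_\a$ is a nonpositive integer), so the argument rests essentially on the global cancellation pattern, and one must check that the telescoping closes up for every $\a\in\Sigmai^+$ and that every leftover gamma factor has an argument with strictly positive real part. An alternative that would bypass this, once one verifies that $\mathcal{M}_{F,{\rm reg}}$ contains the full $m$-domain of holomorphy of $F_\l$, is to observe that $F_\l(m)$ is the $W$-symmetrization of $G_\l(m)$, see \eqref{eq:F-G}, hence is holomorphic in $m$ on $\mathcal{M}_{G,{\rm reg}}$, so that $\mathcal{M}_{\C,0}\subset\mathcal{M}_{G,{\rm reg}}\subset\mathcal{M}_{F,{\rm reg}}$.
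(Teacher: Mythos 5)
Your handling of $\mathcal{M}_{\C,+}\subset\mathcal{M}_{\C,0}$, of the stability under the deformation \eqref{eq:mult-l-gen}, and of $\mathcal{M}_{\C,0}\subset\mathcal{M}_{G,{\rm reg}}$ is complete and correct, and the last of these is exactly the paper's own argument (regroup $\varepsilon_\delta(m)$ over $\Sigmai^+$ using $r_{2\alpha}=r_\alpha$, use $\chi_\delta(r_\alpha)/\chi_\delta(\id)\in[-1,1]$ and $d_\delta\geq 1$ for nontrivial $\delta$). The gap is in the one substantive inclusion, $\mathcal{M}_{\C,0}\subset\mathcal{M}_{F,{\rm reg}}$: you identify the right object, namely $1/\wt{c}(m;\rho(m))$ written as a product over $\Sigmai^+$, and you announce that a Gindikin--Karpelevich telescoping leaves only harmless Gamma factors, but you explicitly defer the verification that the cancellations close up and that every surviving argument has positive real part on $\mathcal{M}_{\C,0}$ --- and that verification \emph{is} the content of the claim, so as submitted this step is a promissory note rather than a proof. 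For what it is worth, your announced outcome is correct in type $BC_r$: with $\rho(m)_{\beta_k/2}=\tfrac{m_\rms}{2}+m_\rml+(k-1)m_\rmm$ and $\rho(m)_{(\beta_j\pm\beta_i)/2}=\tfrac{1}{2}(\rho(m)_{\beta_j/2}\pm\rho(m)_{\beta_i/2})$, the factor of $\beta_k/2$, combined with the factors of $(\beta_k+\beta_i)/2$, $i<k$ (to which one applies Legendre duplication), telescopes to an elementary nowhere-vanishing function times $\Gamma\big(\tfrac{m_\rms+m_\rml}{2}+\tfrac{k-1}{2}m_\rmm+\tfrac{1}{2}\big)$, while the factors of $(\beta_j-\beta_i)/2$ reduce, again after duplication, to $\prod_{j=2}^{r}\Gamma(\tfrac{j}{2}m_\rmm)/\Gamma(\tfrac{1}{2}m_\rmm)$, which Gauss' multiplication formula rewrites as a nonvanishing elementary factor times Gammas with arguments $\tfrac{m_\rmm}{2}+\tfrac{k}{j}$, $0<k<j$. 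All surviving arguments have real part at least $\min(\tfrac12,\tfrac1r)>0$ on $\mathcal{M}_{\C,0}$, so $1/\wt{c}(m;\rho(m))$ is indeed holomorphic (even nonvanishing) there; the plan succeeds, but you must actually carry out (or properly cite) this computation for the proof to stand.

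Two further remarks. First, your argument for the $F$-inclusion is specific to $BC_r$, whereas the sets in the proposition and the inclusion $\mathcal{M}_{\C,0}\subset\mathcal{M}_{F,{\rm reg}}\cap\mathcal{M}_{G,{\rm reg}}$ are formulated for an arbitrary root system; the paper disposes of this inclusion differently, by quoting \cite[Remark 4.4.3]{HS} together with a one-line remark on the Gamma factors (in the extreme case $\rho(m)_\alpha=\tfrac{m_\alpha}{2}+m_{2\alpha}$, e.g.\ for simple $\alpha$, one has $\Gamma(\tfrac{\rho(m)_\alpha}{2}+\tfrac{m_\alpha}{4}+\tfrac{m_{2\alpha}}{2})=\Gamma(\rho(m)_\alpha)$ and the factor simplifies). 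So a completed version of your computation would be more self-contained than the paper's citation in the $BC$ case, but it does not cover the general statement. Second, the fallback you suggest --- deducing $\mathcal{M}_{\C,0}\subset\mathcal{M}_{G,{\rm reg}}\subset\mathcal{M}_{F,{\rm reg}}$ from the fact that $F_\lambda$ is the $W$-average of $G_\lambda$ --- does not work as stated: $\mathcal{M}_{F,{\rm reg}}$ is defined by the nonsingularity of $1/\wt{c}(m;\rho(m))$, not as the $m$-domain of holomorphy of $F_\lambda$, and the identification you would need (that $\mathcal{M}_{F,{\rm reg}}$ contains that full domain) is precisely what is not available from the definitions or from anything quoted in the paper, so this route cannot be used to bypass the Gamma-factor analysis.
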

\begin{proof}
The inclusion $\mathcal{M}_{\C,0} \subset \mathcal{M}_{F, {\rm reg}} $ was observed in \cite[Remark 4.4.3]{HS} and
follows from the properties of the Gamma function. Notice that $\Re(\rho(m)_\alpha)\geq \Re(\frac{m_\a}{2}+m_{2\a})$ for $\a\in \Sigma_{\rm i}^+$, with simplifications in the factor of $c(m;\rho(m))$ corresponding to $\a$ in case of
equality with $\frac{m_\a}{2}+m_{2\a}$ real.
For the inclusion  $\mathcal{M}_{\C,0} \subset \mathcal{M}_{G, {\rm reg}}$, observe that
$\varepsilon_\delta(m)=\sum_{\a\in\Sigma_{\rm i}^+} (m_\a+m_{2\a}) (1-\chi_\delta(r_\alpha)/\chi_\delta(\id))$,
with $\chi_\delta(r_\alpha)/\chi_\delta(\id))\in [-1,1]$.
\end{proof}

%%%%%%%%%%%

\end{document}